\documentclass[english]{article}
\usepackage{lmodern}

\usepackage[T1]{fontenc}
\usepackage[utf8]{inputenc}
\usepackage{dsfont}
\usepackage[a4paper]{geometry}
\usepackage{babel}
\usepackage{amsmath}
\usepackage{mathtools}
\usepackage{amsthm}
\usepackage{amssymb}
\usepackage{setspace}
\usepackage{needspace}
\usepackage{natbib}
\usepackage{xcolor}
\definecolor{mehdi}{RGB}{0,90,200}

\definecolor{badr}{RGB}{0,200,90}

\newcommand{\berv}{\xi_{p}}
\newcommand{\Bern}[1]{\mathrm{Bern}(#1)}
\usepackage{tikz}
\newsavebox{\framedbox}
\usepackage[unicode=true,bookmarks=true,breaklinks=false,pdfborder={0 0 0},colorlinks=true]{hyperref}
\hypersetup{pdftitle={Beyond the sub-Gaussian comparison theorem: a sharp analysis of the sub-Gamma and sub-exponential settings},pdfauthor={El Mahdi Khribch and Badr-Eddine Ch\'erief-Abdellatif},linkcolor=blue,citecolor=blue,urlcolor=blue}

\newtheorem{thm}{Theorem}[section]
\newtheorem{dfn}[thm]{Definition}
\newtheorem{lem}[thm]{Lemma}
\newtheorem{remark}[thm]{Remark}
\newtheorem{cor}[thm]{Corollary}
\newtheorem{prop}[thm]{Proposition}

\newcommand{\diff}{\mathrm{d}}
\newcommand{\E}{\mathbb{E}}
\newcommand{\Prob}{\mathbb{P}}
\newcommand{\R}{\mathbb{R}}
\newcommand{\lecx}{\preceq_{\mathrm{cx}}}
\newcommand{\Lap}{\mathcal{L}}
\newcommand{\SE}{\mathrm{SE}}
\newcommand{\SG}{\mathrm{S\Gamma}}
\DeclareMathOperator{\artanh}{artanh}
\DeclareMathOperator{\sign}{sign}

\title{\textbf{Convex Order Comparisons for Sub-Gamma Random Variables}}
\author{%
  El Mahdi Khribch\textsuperscript{1} \quad %
  Badr-Eddine Ch\'erief-Abdellatif\textsuperscript{2}
  \\[6pt]
  \small \textsuperscript{1}ESSEC Business School%
  \quad
  \small \textsuperscript{2}CNRS, LPSM, Sorbonne Universit\'e, Universit\'e Paris Cit\'e%
}

\date{\today}

\begin{document}
\maketitle

\begin{abstract}\sloppy

Recent work has shown that sub-Gaussian random variables are dominated in convex order by a sharp multiple of a Gaussian. We study the analogous question for the sub-Gamma class underlying Bernstein's inequality, with the Laplace law as the majorant. Here the moment generating function is controlled by a Bernstein-type bound over a finite range of frequencies, rather than by a purely quadratic bound. We derive a variational formula for the optimal multiple, show that it is strictly larger than the natural scale $\sigma\vee\alpha$, and prove that this value is sharp, being attained by an asymmetric two-point distribution. We then turn to the sub-exponential class, which has a quadratic bound as in the sub-Gaussian case, but only over a bounded range as in the sub-Gamma case. Interestingly, the sub-exponential class displays a different behavior: the optimal multiple is exactly $\sigma\vee\alpha$, but is attained only when $\alpha\leq\sigma$; when $\alpha>\sigma$, the constant remains sharp, but equality cannot hold for any non-affine convex function. Both results rely on a sharp finite-range variant of the Kearns-Saul inequality, which is of independent interest.
\end{abstract}

\section{Introduction}
\label{sec:intro}

A remarkable result of \citet[Theorem 1.1]{vanhandel2025subgaussian} shows that a $\sigma$-sub-Gaussian random variable is dominated in convex order by a multiple of a standard Gaussian random variable: namely,
\begin{align}
    \E[f(X)] \le \E[f(c \, G)]
    \qquad \text{for every convex } f \colon \R \to \R,
    \label{eq:intro-vh}
\end{align}
where $G$ is standard Gaussian, and $c$ is a constant depending
only on the sub-Gaussian parameter $\sigma$. The smallest such value of $c$, however, depends on the particular characterization of sub-Gaussianity being adopted. Under the two-sided tail condition
\[
    \Prob(|X|>t) \le 1 \wedge 2e^{-t^2/(2\sigma^2)},
    \qquad \text{for all } t\geq0,
\]
\citet{davispower2026sharp} recently established that the sharp constant is approximately $2.30952\,\sigma$. By contrast, under the moment generating function (MGF) condition
\begin{align}
    \E\left[e^{\lambda X}\right] \le \exp\left(\frac{\sigma^2\lambda^2}{2}\right)
    \qquad \text{for all } \lambda \in \R,
    \label{eq:intro-sg}
\end{align}
\citet{zhang2026sharpmgf} derived the constant $\sqrt{\pi/2}\,\sigma={\sigma}/{\E[|G|]}$. Both results were originally stated for $\sigma=1$, but extend immediately to arbitrary $\sigma>0$. Throughout, we refer to \eqref{eq:intro-vh} as the \emph{sub-Gaussian comparison theorem}, following the terminology associated with the classical comparison results of Ledoux and Talagrand; see, e.g., their monograph \cite{ledouxtalagrand1991}.

A second, closely related family of distributions is given by the sub-Gamma class. {Like the sub-Gaussian class, it admits several equivalent characterizations, including a bound on the moment generating function.} The two classes, however, differ substantially in their tail behavior. Sub-Gamma tails exhibit two regimes: they are essentially Gaussian near the origin and become exponential further out. This two-regime behavior is reflected in the two parameters of the class. In the MGF characterization, the variance proxy $\sigma^2$ controls the quadratic part, while $\alpha$ governs the exponential part through the range of validity of the MGF bound, which extends up to $1/\alpha$.\nopagebreak

\begin{dfn}[Bilateral sub-Gamma class]
\label{dfn:SG}
Let $\sigma, \alpha > 0$. A random variable $X$ belongs to the class $\SG(\sigma^2, \alpha)$ if
\begin{align}
    \E\left[e^{\lambda X}\right] \le \exp\left(\frac{\sigma^2\lambda^2}{2\left(1 - \alpha|\lambda|\right)}\right)
    \qquad \text{for all } |\lambda| < \frac{1}{\alpha}.
    \label{eqn:subgamma}
\end{align}
\end{dfn}

The sub-Gamma class contains the sub-Gaussian class as a special case. Indeed, every sub-Gaussian variable with variance proxy $\sigma^2$ belongs to $\SG(\sigma^2,\alpha)$ for any $\alpha>0$, since the right-hand side of \eqref{eqn:subgamma} dominates that of \eqref{eq:intro-sg} throughout the admissible window. In particular, bounded centered variables are sub-Gamma. {The converse is false. The exponential regime widens the class considerably: it admits tails far heavier than any sub-Gaussian variable can have.} For instance, a centered $\mathrm{Gamma}(k,\theta)$ variable has a moment generating function finite only for $\lambda < 1/\theta$, its chi-squared case only for $\lambda < 1/2$, and a Laplace variable only for $|\lambda| < 1$. Since the sub-Gaussian MGF bound remains finite at every frequency, none of these variables can be sub-Gaussian for any finite variance proxy. All three are nevertheless sub-Gamma for suitable parameters; in particular, the Laplace law, which will serve as our benchmark, belongs to $\SG(2,1)$.

{The sub-Gamma class is not only larger, by allowing heavier tails. It also allows to say more about a variable that is already sub-Gaussian.} Let $X$ be centered, satisfy $|X|\le C$ almost surely, and have variance $v$. The Bernstein argument shows that
\[
    X\in\SG(v,C/3).
\]
Thus, the quadratic parameter of the sub-Gamma bound can be taken to be the actual variance, while the second parameter $C/3$ controls the departure from the quadratic behavior of the log-moment generating function. This distinction is not available in a purely sub-Gaussian description: in general, a bounded random variable with variance $v$ need not be $\sqrt{v}$-sub-Gaussian. The sub-Gamma characterization therefore provides a more faithful description of the local behavior of the MGF while retaining control of its tails. This additional information translates directly into sharper concentration bounds. For $n$ independent copies of $X$, Bernstein's inequality \citep[Section 2.4]{boucheron2013concentration} yields the bound on the empirical mean
\begin{align*}
    \sqrt{\frac{2vx}{n}} + \frac{Cx}{3n}
\end{align*}
with confidence level $1-e^{-x}$. {When $v=\mathcal{O}(1/n)$ both terms are of order $1/n$, while a sub-Gaussian bound that uses only the boundedness of $X$ stays at order $n^{-1/2}$ \citep{hoeffding1963}. The two parameters thus separate the Gaussian regime from the exponential one.}

\begin{lrbox}{\framedbox}%
\begin{minipage}{0.88\textwidth}
In this paper, we investigate the analogue of the sub-Gaussian comparison
theorem \eqref{eq:intro-vh} for the sub-Gamma class
$\SG(\sigma^2,\alpha)$. We ask three related questions:
\begin{itemize}
    \item Is there a natural reference law whose multiple dominates every
    member of $\SG(\sigma^2,\alpha)$ in convex order, and what is the
    smallest such multiple?
    \item How do the two features distinguishing the sub-Gamma MGF bound from the sub-Gaussian one, i.e.\ the finite frequency window vs the relaxed Gaussian ceiling, affect the resulting comparison? 
    \item The sub-Gaussian comparison theorem relies on sharp MGF
    inequalities already established in the literature. Are analogous
    results available for a finite frequency window, and if not, how can
    they be obtained?
\end{itemize}
\end{minipage}%
\end{lrbox}
\begin{center}
\begin{tikzpicture}
\node[draw=blue!40, line width=0.4pt, fill=blue!4, rounded corners=6pt, inner sep=12pt] {\usebox{\framedbox}};
\end{tikzpicture}
\end{center}

\section{Contributions}

In the sub-Gaussian case, the reference law is apparent from the MGF bound: the right-hand side of \eqref{eq:intro-sg} is the moment generating function of a centered Gaussian with variance $\sigma^2$. For the sub-Gamma class, the situation is different. The exponent on the right-hand side of \eqref{eqn:subgamma} is strictly larger than the cumulant generating function of the centered Gamma law from which the envelope is obtained, since for $\Gamma\sim\mathrm{Gamma}(\sigma^2/\alpha^2,\alpha)$,
$$
\log\E\left[e^{\lambda(\Gamma - \E \Gamma)}\right] = \frac{\sigma^2}{\alpha^2}\bigl(-\log(1 - \alpha\lambda) - \alpha\lambda\bigr)
$$
and
\[
    -\log(1-u)-u < \frac{u^2}{2(1-|u|)},
    \qquad \text{for } 0<|u|<1.
\]
Thus, the bound in \eqref{eqn:subgamma} is not the moment generating function of that law, nor, to our knowledge, of any other simple law.

\paragraph{The Laplace benchmark.} {One still needs a benchmark to play the role of $G$ in \eqref{eq:intro-vh}, and the class does not provide one. Two facts constrain the choice. First, convex-order domination implies domination of moment generating functions. Second, the class contains members whose moment generating function is finite on the defining window but infinite beyond it, in both directions. A suitable benchmark should therefore have a finite MGF on $|\lambda|\leq 1/\alpha$ and an infinite MGF beyond the window in both directions, as occurs for some members of the class. This rules out the obvious candidates: a multiple of $G$ has finite exponential moments at every frequency, while a centered Gamma variable is bounded below and does not provide the required two-sided behavior. These constraints do not single out a benchmark: many heavier-tailed laws would qualify, but a benchmark that is too heavy would make the comparison vacuous. The Laplace law turns out to have the right scale: as we shall see, there is a finite threshold $c$ above which convex-order domination holds for the whole class and below which it fails. The Laplace law is nevertheless a natural choice because the sub-Gamma envelope is Gaussian near the origin and exponential in the tails, so its heavier tail regime points to an exponential-tailed benchmark.}


\begin{dfn}[Laplace benchmark]
\label{dfn:laplace}
The standard Laplace variable $\Lap$ has density
\begin{align*}
    \frac{1}{2}e^{-|x|}, \qquad x \in \R,
\end{align*}
and satisfies
\begin{align*}
&&
    \E[\Lap] &= 0, &
    \E[|\Lap|] &= 1, &
    \E\left[e^{\lambda \Lap}\right] &= \frac{1}{1 - \lambda^2}
    \quad (|\lambda| < 1). &
&\end{align*}
\end{dfn}

We show that there exists a finite constant $c=c(\sigma,\alpha)$ such that every member of $\SG(\sigma^2,\alpha)$ is dominated in convex order by $c\,\Lap$: Although \eqref{eqn:subgamma} constrains the moment generating function only, the resulting comparison controls every convex function at once. Moreover, there is a smallest such constant, which we denote by $c^{\SG}(\sigma,\alpha)$. The optimal constant is strictly larger than $\sigma\vee\alpha$ for every parameter pair. We show that it is attained: there exist a member of $\SG(\sigma^2,\alpha)$ and a convex function for which the corresponding convex-order inequality holds with equality. We also derive explicit upper bounds for $c^{\SG}(\sigma,\alpha)$.

\begin{center}
\begin{tikzpicture}
\node[draw=blue!40, line width=0.4pt, fill=blue!4, rounded corners=6pt, inner sep=12pt] {%
\begin{minipage}{0.88\textwidth}
\setstretch{1.15}%
\textsc{Contribution 1: the Sub-Gamma Comparison Theorem.} \\
\vspace{-0.4cm}

\noindent For the sub-Gamma class, there is a smallest $c$ such that
\[
    X \lecx c\,\Lap
\]
for every $X\in\SG(\sigma^2,\alpha)$, and this smallest $c^{\SG}(\sigma,\alpha)$ satisfies
\begin{align*}
    \sigma \vee \alpha
    \ <\ c^{\SG}(\sigma,\alpha)
    \ \leq\
    (\sigma + 2\alpha) \wedge
    \sigma\,\mathcal{U}(\alpha/\sigma),
\end{align*}
{where $\mathcal{U}$ is the explicit one-variable supremum \eqref{eqn:sgM}. This is the content of Theorems~\ref{thm:sgexact} and~\ref{thm:sgbounds}.} The smallest constant is attained by an asymmetric two-point member of $\SG(\sigma^2,\alpha)$ together with a hinge function.
\end{minipage}};
\end{tikzpicture}
\end{center}

\paragraph{Disentangling the effects.} {The sub-Gamma bound \eqref{eqn:subgamma} departs from the Gaussian bound \eqref{eq:intro-sg} in two ways at once: the frequency window is finite, and the Gaussian ceiling is relaxed. As $\alpha\to0$ both departures vanish: the two classes agree in the limit, and the sub-Gamma comparison theorem returns the sub-Gaussian one. For $\alpha>0$ two things change: the scale of the sharp constant is $\sigma\vee\alpha$ rather than $\sigma$, and the constant is strictly larger than that scale. Which effect comes from which departure? To separate them we keep the finite window and restore the Gaussian ceiling. This gives the sub-exponential class \citep[Definition 2.7]{wainwright2019high}.}

\begin{dfn}[Sub-exponential class]
\label{dfn:SE}
Let $\sigma, \alpha > 0$. A random variable $X$ belongs to the class
$\SE(\sigma^2, \alpha)$ if
\begin{align}
    \E\left[e^{\lambda X}\right]
    \leq \exp\left(\frac{\sigma^2\lambda^2}{2}\right)
    \qquad \text{for all } |\lambda| \leq \frac{1}{\alpha} \, .
    \label{eqn:subexp}
\end{align}
\end{dfn}

{The two classes yield the same two-regime Bernstein tail bound.} {By the Chernoff argument \citep[Section~2.4]{boucheron2013concentration}, every $X$ satisfying \eqref{eqn:subexp} or \eqref{eqn:subgamma} obeys}
\begin{align}
    \Prob\left(X \ge \sqrt{2\sigma^2 x} + \alpha x\right) \le e^{-x}
    \qquad \text{for all } x \ge 0,
    \label{eqn:set-bernstein}
\end{align}
{together with the same bound for $-X$: the tail is Gaussian for small $x$ and exponential for large $x$, the two arms crossing at $x = 2\sigma^2/\alpha^2$.}

{The sub-exponential class sits inside $\SG(\sigma^2,\alpha)$, since the Gaussian ceiling in \eqref{eqn:subexp} lies strictly below the sub-Gamma envelope \eqref{eqn:subgamma} inside the window. The comparison changes nonetheless: Laplace is still the right benchmark, but the smallest admissible multiple is now exactly $\sigma\vee\alpha$.} This mirrors the sub-Gaussian comparison theorem closely: the sharp constant there is $\sigma/\E[|G|]$, while here it takes the analogous form $(\sigma\vee\alpha)/\E[|\Lap|]=\sigma\vee\alpha$, since $\E[|\Lap|]=1$. Thus, the dependence on the two parameters persists, while the strict gap present in the sub-Gamma case disappears. Moreover, when $\alpha\leq\sigma$, the constant $\sigma$ is attained by the symmetric two-point variable $\mathrm{Unif}(\{-\sigma,\sigma\})$ together with the convex function $f(x)=|x|$; when $\alpha>\sigma$, the sharp constant is $\alpha$, but no non-affine convex test function attains equality.

\begin{center}
\begin{tikzpicture}
\node[draw=blue!40, line width=0.4pt, fill=blue!4, rounded corners=6pt, inner sep=12pt] {%
\begin{minipage}{0.88\textwidth}
\setstretch{1.15}%
\textsc{Contribution 2: the Sub-Exponential Comparison Theorem.} \\
\vspace{-0.4cm}

\noindent For the sub-exponential subclass, the sharp constant is
\begin{align*}
    c^{\SE}(\sigma,\alpha)
    = \sigma \vee \alpha
\end{align*}
(Theorem~\ref{thm:exact}). For $\alpha \leq \sigma$, this value is attained
by $\mathrm{Unif}(\{-\sigma,\sigma\})$ together with $f(x)=|x|$; for
$\alpha > \sigma$, no pair attains it, and the comparison is strict for
every non-affine convex test function integrable under $\alpha\Lap$.
\end{minipage}};
\end{tikzpicture}
\end{center}

This subclass should not be confused with the one studied by \citet[Theorem 16]{davispower2026sharp}, who obtain a sharp multiple of $\Lap$ for centered integrable variables satisfying a fixed exponential tail envelope. Since their envelope has no shape parameter, it captures the exponential tail alone and does not encode the two-regime behavior of \eqref{eqn:set-bernstein}. Their class is therefore one-parameter, and the sharp constant they obtain is a single number. In our setting, the sharp constant varies with the dimensionless ratio $\alpha/\sigma$: it takes the form $\sigma$ times a function of $\alpha/\sigma$.

\paragraph{A finite-window Kearns-Saul inequality.} A final ingredient of our analysis concerns a classical sharp MGF inequality for Bernoulli random variables. For $\berv\sim\Bern{p}$, the Kearns-Saul inequality \citep{kearnssaul1998} states that
\begin{align} 
\E\left[e^{\lambda(\berv-p)}\right] = (1-p)e^{-p\lambda} + p e^{(1-p)\lambda} \leq \exp\left( \frac{1-2p}{4\log\left(\frac{1-p}{p}\right)}\,\lambda^2 \right), \qquad \text{for all } \lambda\in\R, 
\label{eq:intro-ks} 
\end{align} 
and the coefficient $(1-2p)/(4\log((1-p)/p))$ is optimal. {This inequality is what drives the sharp sub-Gaussian comparison theorem. There the problem reduces to a two-point variable, and what is left to find is the largest spread of its two atoms compatible with the Gaussian MGF bound. Inequality \eqref{eq:intro-ks} gives it.}

In the present sub-exponential setting, the Gaussian MGF bound is imposed only on a bounded frequency window. {This leads to a natural finite-window analogue of Kearns-Saul: for $a>0$, how small can the coefficient $A$ be in}
\begin{align*}
    (1-p)e^{-p\lambda}
    + p e^{(1-p)\lambda}
    \leq
    e^{A\lambda^2},
    \qquad |\lambda|\leq\frac1a,
\end{align*}
{once the Gaussian ceiling is required on that window alone? We write $A_p(a)$ for the smallest admissible coefficient, the classical inequality being the limiting case $a\downarrow0$, in which the window is the whole line.} {We determine $A_p(a)$ explicitly, including equality cases. It equals the classical Kearns-Saul constant when the window reaches the classical binding frequency, and is fixed by the endpoint of the window when it does not. The inequality is of independent interest, and both comparison theorems use it.}

\begin{center}
\begin{tikzpicture}
\node[draw=blue!40, line width=0.4pt, fill=blue!4, rounded corners=6pt, inner sep=12pt] {%
\begin{minipage}{0.88\textwidth}
\setstretch{1.15}%
\textsc{Contribution 3: a finite-window Kearns-Saul inequality.} \\
\vspace{-0.4cm}

\noindent
For $p\in(0,1/2]$ and $a>0$, let $A_p(a)$ denote the smallest coefficient $A$
such that
\[
    \E\left[e^{\lambda(\berv-p)}\right]
    \leq e^{A\lambda^2}
    \qquad \text{for all } |\lambda|\leq \frac{1}{a},
\]
where $\berv\sim\Bern{p}$. We determine $A_p(a)$ sharply:
\begin{align*}
    A_p(a)
    &=
    \begin{cases}
        \displaystyle
        \frac{1-2p}
        {\displaystyle4\log\left(\frac{1-p}{p}\right)},
        & \displaystyle2\log\left(\frac{1-p}{p}\right)\leq \frac{1}{a},\\[8mm]
        \displaystyle
        a^2\log\left(p\,e^{(1-p)/a}+(1-p)\,e^{-p/a}\right),
        & \displaystyle 2\log\left(\frac{1-p}{p}\right)>\frac{1}{a},
    \end{cases}
\end{align*}
{together with the equality cases (Proposition~\ref{prop:ks}) and the maximal two-point spread it yields (Corollary~\ref{cor:numax}).} The first regime recovers the
classical Kearns-Saul constant, while the second is the genuinely
finite-window regime, the optimal coefficient being fixed by the endpoint of the window.
\end{minipage}};
\end{tikzpicture}
\end{center}

\paragraph{Outline.}
The next section contains the main results of the paper and is organized as follows.
Section~\ref{sec:setting} introduces the notation and the two comparison constants.
Section~\ref{sec:sg} establishes the sub-Gamma comparison theorem and derives explicit bounds on its sharp constant.
Section~\ref{sec:se} determines the exact constant for the sub-exponential subclass and characterizes its attainment.
Section~\ref{sec:ks} develops the finite-window Kearns-Saul inequality underlying the two-point reduction used in the proof of the sub-exponential result.
The proofs of the main results are collected in the appendices.

\section{Main results}
\label{sec:results}

We begin by fixing the notation and the comparison constants used throughout the paper, and then discuss our main results.

\subsection{Setting and the comparison constants}
\label{sec:setting}

We write $X \lecx Y$, and say that $Y$ dominates $X$ in convex order, if
\begin{align}
    \E[f(X)] \le \E[f(Y)]
    \qquad \text{for every convex } f \colon \R \to \R.
    \label{eqn:cxdef}
\end{align}
Every variable considered below is integrable, and every convex $f$ admits an affine minorant. Hence the expectations in \eqref{eqn:cxdef} are well defined in $(-\infty,+\infty]$, and the inequality is understood in this extended sense.

The two comparison constants associated with the Laplace benchmark are
\begin{align}
&&
    c^{\SG}(\sigma, \alpha)
    &\coloneqq
    \inf\left\{
        c > 0 :
        \forall X \in \SG(\sigma^2, \alpha),\
        X \lecx c\,\Lap
    \right\}, &&
    \label{eqn:defcSG}
\end{align}
and
\begin{align}
&&
    c^{\SE}(\sigma, \alpha)
    &\coloneqq
    \inf\left\{
        c > 0 :
        \forall X \in \SE(\sigma^2, \alpha),\
        X \lecx c\,\Lap
    \right\}. &&
    \label{eqn:defc}
\end{align}
Each class admits a smallest multiple of $\Lap$ dominating all its members in convex order.

An important feature is that both classes and the comparison rescale naturally. Namely, $X \in \SG(\sigma^2, \alpha)$ if and only if $X/\sigma \in \SG(1, \alpha/\sigma)$, and the same holds for $\SE$. Moreover, $X \lecx c\,\Lap$ if and only if $X/\sigma \lecx (c/\sigma)\,\Lap$. Hence, with
\begin{align*}
    a \coloneqq \frac{\alpha}{\sigma},
\end{align*}
we have
\begin{align}
&&
    c^{\SG}(\sigma, \alpha) &= \sigma\,c^{\SG}(1,a), &
    c^{\SE}(\sigma, \alpha) &= \sigma\,c^{\SE}(1,a), &
& \label{eqn:normalize}
\end{align}
so that the dependence on $(\sigma,\alpha)$ reduces to an overall scale $\sigma$ and the dimensionless ratio $a=\alpha/\sigma$.

Finally, we call $(X,f)$ an \emph{extremal pair} for $c\,\Lap$ if $X$ belongs to the class under consideration, $f$ is convex and non-affine with $\E[f(c\Lap)]<\infty$, and
\begin{align}
    \E[f(X)] = \E[f(c\Lap)].
    \label{eqn:extremal}
\end{align}
Whether an extremal pair exists for the optimal multiple is one of the distinctions between the two classes. For the sub-Gamma class, such an extremal pair exists for every $(\sigma,\alpha)$, whereas for the sub-exponential class it exists when $\alpha\leq\sigma$ and fails to exist when $\alpha>\sigma$.

\subsection{The sub-Gamma class}
\label{sec:sg}

We know of no closed form for the constant \eqref{eqn:defcSG}. We first establish that it lies strictly above $\sigma\vee\alpha$ and that the optimal constant is attained by an extremal pair.

\begin{thm}[Domination, strictness and attainment]
\label{thm:sgexact}
Let $\sigma, \alpha > 0$. Every $X \in \SG(\sigma^2,\alpha)$ satisfies
\begin{align*}
    X \lecx c^{\SG}(\sigma,\alpha)\,\Lap,
\end{align*}
so that the infimum in \eqref{eqn:defcSG} is a minimum, and
\begin{align}
    c^{\SG}(\sigma,\alpha) > \sigma \vee \alpha.
    \label{eqn:sgstrict}
\end{align}
An extremal pair exists: there are a weight $p_a \in (0,1/2)$ and a centered member $Y$ of $\SG(\sigma^2,\alpha)$ carried by two points with masses $p_a$ and $1 - p_a$ such that, with $t \coloneqq c^{\SG}(\sigma,\alpha)\log(1/(2p_a))$ and $f(x) \coloneqq (x-t)_+$,
\begin{align*}
    \E\left[f(Y)\right] = \E\left[f\left(c^{\SG}(\sigma,\alpha)\Lap\right)\right].
\end{align*}
\end{thm}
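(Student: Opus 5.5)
By \eqref{eqn:normalize}, I would set $\sigma=1$ and $a=\alpha$. The proof then reduces convex order to hinge functions. Any $X\in\SG(1,a)$ is centered: expanding \eqref{eqn:subgamma} to first order at $\lambda=0$ gives $\E X=0$. For centered integrable variables, $X\lecx Y$ holds iff $\E(X-t)_+\le \E(Y-t)_+$ for every $t\in\R$. For $Y=c\Lap$ this stop-loss transform is explicit: $\E(c\Lap-t)_+=\tfrac{c}{2}e^{-t/c}$ for $t\ge0$, and it is symmetric through the identity $(x-t)_+=(t-x)_++x-t$. So the quantity to control is
\[
\Psi(t)\coloneqq\sup_{X\in\SG(1,a)}\E(X-t)_+,
\]
and we have
\[
c^{\SG}(1,a)=\inf\{c:\ \Psi(t)\le \tfrac{c}{2}e^{-|t|/c}+(-t)_+\ \forall t\}.
\]
By symmetry of the class under $X\mapsto -X$, it suffices to treat $t\ge0$.

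The key reduction is that $\Psi(t)$ is attained on two-point centered laws. Fixing $t$, the map $X\mapsto\E(X-t)_+$ is linear in the law. The class is convex and is cut out by the family of constraints $\E e^{\lambda X}\le \exp(\lambda^2/(2(1-a|\lambda|)))$. The standard step is to take the mean-preserving contraction of $X$ that collapses $\{X\le t\}$ and $\{X>t\}$ to their conditional means. This keeps $\E(X-t)_+$ unchanged and, by Jensen, decreases $\E e^{\lambda X}$ for every $\lambda$. The result is a centered two-point law with atoms $-u<0$ and $v>t$, masses $1-p$ and $p$, satisfying $pv=(1-p)u$, and the hinge value is $p(v-t)$. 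This yields a variational formula:
\[
c^{\SG}(1,a)=\sup_{(p,u,v)\ \text{admissible},\ t\ge0}\ \text{(smallest } c \text{ with } p(v-t)\le \tfrac c2 e^{-t/c}).
\]
Optimizing over $t$ at fixed $c$ gives the tangency $t=c\log(1/(2p))$. This is exactly the point $t$ in the statement, and it requires $p<1/2$. The inequality becomes $pv-pc\log(1/(2p))\le pc$, that is,
\[
v\le c\bigl(1+\log\tfrac1{2p}\bigr).
\]
Hence
\[
c^{\SG}(1,a)=\sup_{p\in(0,1/2)}\frac{v_{\max}(p)}{1+\log(1/(2p))},
\]
where $v_{\max}(p)$ is the largest upper atom of a centered two-point law with weight $p$ lying in $\SG(1,a)$. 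For $p\ge1/2$ the hinge bound holds trivially once $c\ge$ the relevant scale, and I would check this separately.

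Next, attainment and the minimum. The constraint set in $(p,v)$ is closed. Finiteness follows from the crude bound $c\le 1+2a$: the Bernstein tails \eqref{eqn:set-bernstein} integrate to a bound on $\E(X-t)_+$. To get a maximizer $p_a\in(0,1/2)$, I must show the ratio does not approach its supremum as $p\to0$ or $p\to1/2$. As $p\to0$, $v_{\max}(p)$ is forced by the window to be at most about $a\log(1/p)$, because the tail of a two-point law at $\lambda\uparrow1/a$ must stay below a blow-up. Then the ratio tends to $a$. As $p\to1/2$, the ratio tends to $v_{\max}(1/2)$, which is $\le$ an explicit value. Once both boundary limits are shown to lie strictly below an interior value, the supremum is a maximum, say at $p_a$. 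The pair $(Y,f)$ built from $p_a$, $v_{\max}(p_a)$, and $t=c\log(1/(2p_a))$ then gives equality, and domination holds at $c=c^{\SG}$ itself because the sup is attained. This same comparison yields \eqref{eqn:sgstrict}: the boundary limits are $a$ (as $p\to0$) and roughly $1$ (the symmetric spread near $p=1/2$, where $\log(1/(2p))\to0$), and an interior $p$ beats both.

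The main obstacle is this strictness and interior comparison. For the near-$1/2$ end, I would perturb $p=1/2-\varepsilon$ and use the fact that the sub-Gamma envelope strictly exceeds the Gaussian ceiling near $0$. This allows $v_{\max}(p)>1$ to first order, while the denominator grows only like $2\varepsilon$. That requires a first-order expansion of $v_{\max}$, obtained through the implicit function theorem applied to the binding MGF constraint. For the $p\to0$ end, I would show that $v_{\max}(p)-a\log(1/p)$ stays bounded below by a positive constant at moderate $p$, using the relaxed ceiling $1/(1-a\lambda)$ near the window edge, so that the ratio exceeds $a$ for some finite $p$.
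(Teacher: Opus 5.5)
\textbf{Reduction: same as the paper.} Your reduction is the paper's route: centering, the stop-loss characterization, projection onto $\E[X\mid\sigma(\{X>t\})]$, tangency at $t=c\log(1/(2p))$, and a maximizer obtained from closedness of the feasible set together with the boundary limits $a$ at $p\downarrow0$ and $1$ at $p\uparrow1/2$.

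\textbf{Strictness near $p=1/2$: the first-order claim is false.} The gap is in the step you flag yourself. Strictness is needed twice: for \eqref{eqn:sgstrict}, and to keep a maximizing sequence away from both ends. Neither of your two arguments works as stated. Near $p=1/2$, any envelope of the form $\lambda^2/2+O(|\lambda|^3)$ caps the variance at one, so $v_{\max}(p)\le\sqrt{(1-p)/p}$. The sub-Gamma relaxation over the Gaussian ceiling is only $a|\lambda|^3/2+O(\lambda^4)$. Writing $p=1/2-\varepsilon=(1-x)/2$,
\begin{align*}
    \sqrt{\frac{1+x}{1-x}} = 1 + x + \frac{x^2}{2} + \frac{x^3}{2} + O(x^4),
    \qquad
    1 + \log\frac{1}{2p} = 1 + x + \frac{x^2}{2} + \frac{x^3}{3} + O(x^4).
\end{align*}
So the numerator can at best tie the denominator at first and second order. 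A first-order expansion of $v_{\max}$ therefore cannot give $R(p)>1$. The available gain $x^3/6$ is realized only if $v_{\max}(p)$ equals the variance cap to order $x^3$.

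There is also no interior binding frequency for the implicit function theorem to continue: near $p=1/2$ the constraint binds only as $\lambda\to0$ (the variance branch of Lemma~\ref{lem:sgbinding}). What is missing is a global feasibility statement: the variance-one law $W_x$ itself lies in $\SG(1,a)$ for small $x$. The paper gets this from $K_x'''(\lambda)\le K_x'''(0)=2x/\sqrt{1-x^2}$ for $\lambda\ge0$. That gives $K_x(\lambda)\le\frac{\lambda^2}{2}(1+a\lambda)\le\frac{\lambda^2}{2(1-a\lambda)}$. The optimal scale $\sqrt{(1+x)/(1-x)}/(1-\log(1-x))$ of $W_x$ then exceeds one (Proposition~\ref{prop:sgone}).

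\textbf{Strictness near $p=0$: a positive excess is not enough.} We have $1+\log\frac{1}{2p}=\log\frac1p+d_0$ with $d_0=1-\log2$. Hence $R(p)>a$ requires $v_{\max}(p)>a\log\frac1p+a\,d_0$. An excess over $a\log\frac1p$ that is merely some positive constant proves nothing. Moreover, this happens only for $p$ small depending on $a$, not at moderate $p$: the maximizing weights in Remark~\ref{rk:sgnumeval} are of order $10^{-113}$ at $a=10$.

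The paper fixes the upper atom at exactly $a(\ell+d_0)$. It shows this member has variance below one and strict slack at every frequency once $p$ is small. Frequencies $\lambda=\theta/a$ with $\theta\le1/2$ are handled by a second-order bound. For $\theta\ge1/2$ it uses $\sup_{\delta>0}\delta e^{-\delta\ell}=1/(e\ell)$ against the envelope bound $1/(8a^2\delta)$. It then dilates the member by some $r>1$ via Lemma~\ref{lem:sgenlarge}, obtaining optimal scale $ra>a$ (Proposition~\ref{prop:sga}). The same estimates with any fixed excess $C>a\,d_0$ would also work.

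\textbf{Minor point on finiteness.} Integrating the Bernstein tail cannot certify $c\le1+2a$. Already at $t=0$ it bounds $\E[X_+]$ only by $a+\sqrt{\pi/2}$, which exceeds $(1+2a)/2$. The paper's $1+2a$ comes from the Kearns--Saul test point together with Lemmas~\ref{lem:A} and~\ref{lem:B}, though any finite bound suffices for your purpose.
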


The lower bound in \eqref{eqn:sgstrict} is identified in Section~\ref{sec:se} as the exact comparison constant for the sub-exponential subclass. Thus 
\[ 
c^{\SG}(\sigma,\alpha) > c^{\SE}(\sigma,\alpha) = \sigma\vee\alpha.
\] 
{Both classes therefore have the same scale, and relaxing the Gaussian ceiling costs a strictly larger constant. The Bernstein tail bound \eqref{eqn:set-bernstein} is the same for both, so it cannot see this gap.}

\begin{remark}[Centered Gamma and chi-squared members]
\label{rk:chisq}
Let $W \sim \mathrm{Gamma}(k,\alpha)$, the second parameter denoting scale. We then have
\begin{align*}
    W - \E[W] \in \SG\left(k\alpha^2, \alpha\right)
    \qquad \text{so that} \qquad
    W - \E[W] \lecx c^{\SG}\left(\alpha\sqrt{k},\, \alpha\right)\Lap.
\end{align*}
At $k=d/2$ and $\alpha=2$, this reads
\[
    \chi^2_d-d \in \SG(2d,2)
    \qquad\text{and hence}\qquad
    \chi^2_d-d
    \lecx c^{\SG}\left(\sqrt{2d},2\right)\Lap.
\]
\end{remark}

The proof also yields an explicit variational representation of the sharp constant. Normalize $\sigma=1$ through \eqref{eqn:normalize}. For $p \in (0,1)$ let $g_p(s) \coloneqq -ps + \log(1 - p + p\,e^{s})$ be the cumulant generating function of the centered two-point variable carrying mass $p$ at $1-p$ and mass $1-p$ at $-p$; it vanishes at $s = 0$ and increases strictly on $[0,\infty)$, and $g_p^{-1}$ inverts that restriction. One has
\begin{align}
    c^{\SG}(1,a)
    = \sup_{0 < p \le 1/2} \frac{a(1-p)}{\displaystyle \left(1 + \log\frac{1}{2p}\right)}
    \inf_{0 < \theta < 1} \frac{1}{\theta}\,g_p^{-1}\!\left(\frac{\theta^2}{2a^2(1-\theta)}\right)
    \label{eqn:sgnestedbody}
\end{align}
(Proposition~\ref{prop:sgfixedp} of Appendix~\ref{app:sg}). {Here $p$ is the weight of the upper atom, and $\theta = a\lambda$ runs over the positive half $0 < \lambda < 1/a$ of the window. The outer supremum is attained at the weight $p_a$ of Theorem~\ref{thm:sgexact}.}

Explicit bounds place the constant on the scale $\sigma \vee \alpha$.

\begin{thm}[Explicit upper bounds]
\label{thm:sgbounds}
Let $\sigma, \alpha > 0$ and $d_0 \coloneqq 1 - \log 2$. Then
\begin{align}
    \sigma \vee \alpha \ <\ c^{\SG}(\sigma, \alpha) \ \le\ (\sigma + 2\alpha) \wedge \sigma\,\mathcal{U}(a),
    \label{eqn:sgbounds}
\end{align}
{where $a = \alpha/\sigma$ as in \eqref{eqn:normalize} and}
\begin{align}
    \mathcal{U}(a) \coloneqq \sup_{\ell \ge \log 2} \frac{a\ell + \sqrt{2\ell}}{\ell + d_0},
    \label{eqn:sgM}
\end{align}
{a one-variable supremum that Lemma~\ref{lem:sgMeval} evaluates in closed form. Its variable is $\ell = \log(1/p)$, and the constraint $\ell \ge \log 2$ is the range of $\ell$ as the atom weight $p$ runs over $(0,1/2]$.} Consequently
\begin{align}
    \frac{c^{\SG}(\sigma,\alpha)}{\sigma \vee \alpha} \longrightarrow 1
    \qquad \text{as } a \downarrow 0 \text{ and as } a \to \infty.
    \label{eqn:sgratio}
\end{align}
\end{thm}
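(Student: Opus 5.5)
We normalize $\sigma=1$ through \eqref{eqn:normalize}, so $a=\alpha$. The strict lower bound $1\vee a<c^{\SG}(1,a)$ is already part of Theorem~\ref{thm:sgexact}. It remains to prove the two upper bounds and then the limits \eqref{eqn:sgratio}. For the upper bounds we work with the variational formula \eqref{eqn:sgnestedbody}, which we take from Proposition~\ref{prop:sgfixedp}. The observation that drives everything is that for each fixed $p$ the inner infimum may be bounded above by any convenient choice of $\theta$, after replacing $g_p^{-1}$ by an explicit majorant.

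\emph{The bound $\mathcal U(a)$.} Put $\ell=\log(1/p)\ge\log 2$. For $s\ge0$ we have $1-p+pe^{s}\ge pe^{s}$, hence $g_p(s)\ge (1-p)s-\ell$. Inverting on $[0,\infty)$ gives
\[
g_p^{-1}(y)\le \frac{y+\ell}{1-p}.
\]
Substituting into \eqref{eqn:sgnestedbody}, the factor $1-p$ cancels, and we note that $1+\log\frac1{2p}=\ell+d_0$. This yields
\[
c^{\SG}(1,a)\le \sup_{\ell\ge\log2}\frac{a}{\ell+d_0}\,\inf_{0<\theta<1}\Bigl(\frac{1}{2a^2(1-\theta)}+\frac{\ell}{\theta}-\frac1{2a^2}\Bigr).
\]
The elementary identity $\min_{\theta}\bigl(A/(1-\theta)+B/\theta\bigr)=(\sqrt A+\sqrt B)^2$ evaluates the inner infimum as
\[
\Bigl(\tfrac{1}{a\sqrt2}+\sqrt\ell\Bigr)^2-\tfrac1{2a^2}=\ell+\frac{\sqrt{2\ell}}{a}.
\]
Multiplying by $a/(\ell+d_0)$ gives exactly $(a\ell+\sqrt{2\ell})/(\ell+d_0)$, and taking the supremum over $\ell$ gives $c^{\SG}(1,a)\le\mathcal U(a)$.

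\emph{The bound $1+2a$.} This cannot follow from $\mathcal U$ alone. For small $a$ one has $\mathcal U(a)\ge\sqrt{2\log2}/(\log2+d_0)\approx1.18>1+2a$. So here I would keep $g_p^{-1}$ sharper for moderate $\ell$, using the second-order bound $g_p(s)\ge$ (Kearns--Saul type lower quadratic) near the origin. I would also choose $\theta$ non-optimally, for instance $\theta$ of the form $\sqrt{2\ell}\,a/(1+\sqrt{2\ell}\,a)$, and check that the resulting expression is at most $1+2a$ uniformly in $p$.

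As a fallback I would argue directly with the convex-order criterion. For centered $X$ it suffices to show $\E(X-t)_+\le \E(c\Lap-t)_+=\tfrac c2e^{-t/c}$ for $t\ge0$, together with the symmetric statement. I would bound $\E(X-t)_+\le\inf_\lambda e^{-\lambda t}\E e^{\lambda X}/(e\lambda)$ using \eqref{eqn:subgamma}, and choose $\lambda=1/(1+2a)$ inside the window. I expect this step, obtaining a clean $1+2a$ valid for every $a$, to be the main obstacle.

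\emph{The limits.} As $a\to\infty$, the bound $\mathcal U(a)/a\le\sup_{\ell\ge\log2}\bigl(\ell+\sqrt{2\ell}/a\bigr)/(\ell+d_0)$ tends to $\sup_\ell \ell/(\ell+d_0)=1$; this needs the supremum to be controlled uniformly, which the closed form of Lemma~\ref{lem:sgMeval} provides. Combined with the lower bound $c^{\SG}>a$, this gives ratio $\to1$. As $a\downarrow0$, $\mathcal U$ does not tend to $1$, so I would pass to the limit in \eqref{eqn:sgnestedbody} itself. Writing $\theta=a\lambda$, the fixed-$p$ quantity becomes
\[
\frac{1-p}{1+\log\frac1{2p}}\inf_{0<\lambda<1/a}\frac{g_p^{-1}\bigl(\lambda^2/(2(1-a\lambda))\bigr)}{\lambda},
\]
which decreases to the corresponding expression with window $\R$ and ceiling $\lambda^2/2$. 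That expression is the sub-Gaussian (equivalently, $a\le1$ sub-exponential) fixed-$p$ value. Its supremum over $p$ is $1$ by Theorem~\ref{thm:exact} and Corollary~\ref{cor:numax}. Finally, I would justify exchanging $\lim_{a\downarrow0}$ with $\sup_p$ using monotonicity in $a$ together with the uniform bound $\mathcal U(a)\le\mathcal U(1)$, which gives the limit $1$.
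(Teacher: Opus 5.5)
Your lower bound and your proof of $c^{\SG}(1,a)\le\mathcal U(a)$ are correct and match the paper. Majorizing $g_p^{-1}$ through $g_p(s)\ge(1-p)s-\ell$ and optimizing in $\theta$ is exactly the ``keep only the upper atom at $\lambda=\theta/a$'' computation behind \eqref{eqn:sginterior}.

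The genuine gap is the bound $c^{\SG}(1,a)\le 1+2a$. You do not prove it, and neither of your suggestions would. The fallback cannot work for small $a$: already at $t=0$ the best it can give is
\[
\E[X_+]\le\inf_{0<\lambda<1/a}\frac{1}{e\lambda}\exp\Bigl(\frac{\lambda^2}{2(1-a\lambda)}\Bigr)\ \ge\ \inf_{\lambda>0}\frac{e^{\lambda^2/2}}{e\lambda}=e^{-1/2}\approx 0.607.
\]
But you need $\E[X_+]\le S_{(1+2a)\Lap}(0)=(1+2a)/2$, which is below $e^{-1/2}$ for every $a<0.1$. The Chernoff--Markov step loses a fixed factor, which is incompatible with a constant tending to $1$ as $a\downarrow0$.

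Your first suggestion also looks in the wrong place. What you need is not a better majorant of $g_p^{-1}$ at a $\theta$ tuned for $\mathcal U$, but a frequency at which $g_p^{-1}$ is known exactly. That is the Kearns--Saul point, where $g_p(2\beta_p)=(1-2p)\beta_p$ by \eqref{eqn:MYtest}. In your nested formula, for $p<1/2$, take $\lambda$ to be the positive root of $\lambda^2=2(1-2p)\beta_p(1-a\lambda)$. It lies automatically in $(0,1/a)$, and $g_p^{-1}$ of the envelope there equals $2\beta_p$, so
\[
\nu^{\SG}_{\max}(p,a)\le\frac{2\beta_p}{\lambda}=a\beta_p+\sqrt{a^2\beta_p^2+\frac{2\beta_p}{1-2p}}\le 2a\beta_p+\sqrt{\frac{2\beta_p}{1-2p}}.
\]
This is \eqref{eqn:sgquad} followed by $\sqrt{x+y}\le\sqrt x+\sqrt y$. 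Multiply by $1-p$, then apply Lemma~\ref{lem:B} to the first term and Lemma~\ref{lem:A} to the second. This gives $(1-p)\,\nu^{\SG}_{\max}(p,a)\le(1+2a)\bigl(1+\log\frac1{2p}\bigr)$. At $p=1/2$ one has $\nu^{\SG}_{\max}(1/2,a)=2$ directly, and \eqref{eqn:sgvarformula} then yields $c^{\SG}(1,a)\le 1+2a$. This is the paper's route.

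With this bound, the limit $a\downarrow0$ is immediate from $1<c^{\SG}(1,a)\le 1+2a$. Your alternative argument for that limit is incomplete as written. Pointwise monotone convergence of $R^{\SG}(p,a)$ does not justify swapping $\lim_{a\downarrow0}$ and $\sup_p$, and $\mathcal U(a)\le\mathcal U(1)$ only keeps the supremum bounded, not close to $1$. It can be repaired: for $\ell\ge2$, \eqref{eqn:sginterior} gives $R^{\SG}(p,a)\le a+\sqrt{2/\ell}\le 1+a$, and on the remaining compact range of $p$ Dini's theorem applies, with continuity from Lemma~\ref{lem:sgcontinuity}. But the repair is unnecessary once $1+2a$ is proved.

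The limiting fixed-$p$ spread is also misidentified. It is the unrestricted Kearns--Saul spread $\sqrt{2\beta_p/(1-2p)}$, not the sub-exponential $\nu_{\max}(p,a)$ for $a\le1$, which lies on the endpoint branch for small $p$. Its normalized supremum is $1$ by Lemma~\ref{lem:A}.

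Your $a\to\infty$ argument is fine, and it does not need the closed form, since $\sqrt{2\ell}/(\ell+d_0)$ is bounded.
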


{Numerical evaluation of \eqref{eqn:sgnestedbody} (Remark~\ref{rk:sgnumeval}) gives}
\begin{align*}
    \begin{array}{r|cccc}
        a & 1 & 2 & 5 & 10\\[2pt]
        \hline
        \rule{0pt}{2.4ex}
        c^{\SG}(1,a) & 1.4151 & 2.2580 & 5.1139 & 10.0579\\[2pt]
        \text{bound } \eqref{eqn:sgbounds} & 1.8706 & 2.6216 & 5.3070 & 10.1604\\[2pt]
        \text{ratio} & 1.32 & 1.16 & 1.04 & 1.01
    \end{array}
\end{align*}
{For $a>1$, the bound becomes increasingly tight as $a$ grows; Proposition~\ref{prop:sgCbar} provides a sharper computable bound.}

That gap above $\sigma \vee \alpha$ is exactly what the sub-exponential hypothesis closes.

\subsection{The sub-exponential class}
\label{sec:se}

The sub-exponential subclass admits an exact comparison constant: it is precisely $\sigma\vee\alpha$, strictly smaller than $c^{\SG}(\sigma,\alpha)$. 

\begin{thm}[Exact constant]
\label{thm:exact}
Let $\sigma, \alpha > 0$. Every $X \in \SE(\sigma^2,\alpha)$ satisfies
\begin{align*}
    X \lecx \left(\sigma \vee \alpha\right)\Lap,
\end{align*}
and no smaller multiple of $\Lap$ dominates every member: $c^{\SE}(\sigma, \alpha) = \sigma \vee \alpha$. An extremal pair for that multiple exists exactly when $\alpha \le \sigma$, and $X \sim \mathrm{Unif}(\{-\sigma,\sigma\})$ with $f(x) = |x|$ is one.
\end{thm}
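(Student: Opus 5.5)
By the scaling relation \eqref{eqn:normalize} we may take $\sigma=1$ and $\alpha=a$, and we aim to show $X\lecx (1\vee a)\Lap$ for every $X\in\SE(1,a)$.

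\textbf{Reduction to hinges.} Both sides are centered. Indeed, expanding the MGF bound near $\lambda=0$ forces $\E X=0$ and $\E X^2\le 1$. It therefore suffices to compare $\E(X-t)_+\le \E(c\Lap-t)_+$ for all $t$, where $c=1\vee a$. By the symmetry of the class under $X\mapsto -X$, it is enough to treat $t\ge 0$. For $t\ge0$ we have $\E(c\Lap-t)_+=\tfrac c2 e^{-t/c}$.

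\textbf{Bounding the hinge.} The supremum of $\E(X-t)_+$ over centered laws with a given MGF constraint is governed by the extreme points. For fixed $t$, a linear functional is maximized over a convex set cut out by moment-type constraints, and the maximizers can be taken two-point (as in the sub-Gaussian comparison arguments). So take $X$ centered on two atoms, mass $p\le 1/2$ at $\nu(1-p)$ and $1-p$ at $-\nu p$; the case $p>1/2$ is handled by reflection. The condition $X\in\SE(1,a)$ reads $\nu^2A_p(a\nu)\le 1/2$ after rescaling $\lambda\mapsto\lambda\nu$, which uses the window $|\lambda\nu|\le \nu/a$, i.e.\ parameter $a/\nu$ in $A_p$. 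Corollary~\ref{cor:numax} gives the maximal admissible spread $\nu_{\max}(p,a)$.

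For such $X$,
\[
\E(X-t)_+ = p\bigl(\nu(1-p)-t\bigr)_+ .
\]
We need this to be at most $\tfrac c2e^{-t/c}$ for all $t\ge0$. Optimizing the right-hand side's tangent behavior, it suffices to check the condition $p\,\nu(1-p)\le \tfrac c2$ together with the convexity-envelope condition. The hinge line $t\mapsto p(\nu(1-p)-t)$ has slope $-p$, and it lies below $\tfrac c2 e^{-t/c}$ iff at the tangency point $t^*=c\log(1/(2p))$ we have
\[
p\bigl(\nu(1-p)-t^*\bigr)\le p c,
\]
that is, $\nu(1-p)\le c\,(1+\log\tfrac1{2p})$. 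So the core inequality is
\[
\nu_{\max}(p,a)\,(1-p)\le (1\vee a)\bigl(1+\log\tfrac{1}{2p}\bigr)\quad\text{for all }p\in(0,1/2],
\]
which is the $\SE$ analogue of \eqref{eqn:sgnestedbody}.

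\textbf{Verifying the core inequality.} I would check it in the two regimes of Proposition~\ref{prop:ks}.
\begin{itemize}
\item In the classical Kearns--Saul regime, $\nu^2=2\log(\tfrac{1-p}{p})/(1-2p)$. At $p=1/2$ the spread is $\nu=2$ and the inequality reads $1\le c$, with equality when $c=1$. This is exactly the $\mathrm{Unif}\{\pm1\}$ extremal case with $f=|x|$, which also gives the lower bound $c^{\SE}\ge 1$. For $p<1/2$ an elementary one-variable inequality should give strict slack.
\item In the endpoint regime, the window constraint gives $\log(pe^{(1-p)\nu/a}+(1-p)e^{-p\nu/a})\le 1/(2a^2)$. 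This yields roughly $\nu(1-p)\lesssim a\log(1/p)+O(1)$, to be compared with $a(1+\log\tfrac1{2p})$.
\end{itemize}
I expect this endpoint-regime verification, uniformly in $p$ and $a$, to be the main obstacle. I would try writing $u=\nu/a$ and using $pe^{(1-p)u}\le e^{1/(2a^2)}$ to get $(1-p)u\le \log(1/p)+1/(2a^2)$. The $1/(2a^2)$ term must then be absorbed using $a\ge1$, together with the $-\log 2+1$ slack; the case $a<1$ is handled with $c=1$.

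\textbf{Lower bound and attainment.} For the lower bound $c^{\SE}\ge a$, take $p\to0$. The admissible spread satisfies $\nu(1-p)\approx a\log(1/p)$, while the right-hand side is $c(\log(1/p)+1-\log 2)$. Letting $p\to0$ forces $c\ge a$. When $a>\sigma$ the approach to this bound is only asymptotic, and strict slack at every finite $p$ is what should rule out an extremal pair. For a general non-affine $f$, equality would require equality for some hinge on the support of $f''$, which this strictness excludes.
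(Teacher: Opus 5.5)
Your reduction (centering, hinges at $t\ge 0$, two-point members, tangency of the slope-$(-p)$ line at $t^*=c\log\frac{1}{2p}$) arrives at exactly the paper's variational condition $(1-p)\,\nu_{\max}(p,a)\le c\,(1+\log\frac{1}{2p})$. Your lower bounds and the $\mathrm{Unif}(\{\pm1\})$ extremal pair also match the paper.

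\textbf{Main gap: the endpoint regime.} The gap is in the step you flagged yourself, and it is substantive: the absorption you propose there fails near $a=1$. Dropping the lower-atom term gives $(1-p)u\le\log\frac1p+\frac{1}{2a^2}$. With $u=\nu/a$, the target $(1-p)\nu\le a(1+\log\frac{1}{2p})$ reads $(1-p)u\le\log\frac1p+1-\log 2$. Deriving the target from your bound needs $\frac{1}{2a^2}\le 1-\log 2\approx 0.307$, i.e.\ $a\ge(2(1-\log 2))^{-1/2}\approx 1.28$; at $a=1$ you would need $0.5\le 0.307$.

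So for $1\le a<1.28$ your estimate never certifies $R(p,a)\le a$ on the endpoint branch. The case $a<1$ inherits the problem, whether you treat it directly or, as the paper does, through the monotonicity $\nu_{\max}(p,a)\le\nu_{\max}(p,1)$. So does the strict inequality you need for non-attainment when $1<a<1.28$.

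The missing idea is to keep the term $(1-p)e^{-pm}$, where $m=\nu_{\max}/a$ solves $pe^{(1-p)m}+(1-p)e^{-pm}=e^{1/(2a^2)}$, and bound it from below.
\begin{itemize}
\item On the endpoint branch, $m<2\beta_p$ with $\beta_p=\log\frac{1-p}{p}$. This holds because $g_p$ is increasing and $g_p(2\beta_p)=(1-2p)\beta_p>\frac{1}{2a^2}$.
\item Hence $(1-p)e^{-pm}>(1-p)^{1-2p}p^{2p}>\frac25$ for every $p\in(0,\frac12]$.
\item The target is equivalent to $pe^{(1-p)m}<\frac e2$, i.e.\ $e^{1/(2a^2)}-(1-p)e^{-pm}<\frac e2$.
\item For all $a\ge 1$ this then follows, strictly, from $e^{1/2}-\frac25<\frac e2$. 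This is Proposition~\ref{prop:Rbound}.
\end{itemize}

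\textbf{Two further points.}
\begin{itemize}
\item Your extreme-point justification of the two-point reduction does not work as stated. $\SE(1,a)$ is cut out by a continuum of constraints, one for each $\lambda$ in the window. Carath\'eodory/Winkler-type arguments bound the number of atoms only by the number of constraints plus one, which is infinite here. What works, and what the sub-Gaussian proofs actually use, is the projection $Y=\E[X\mid\sigma(\{X>t\})]$. Conditional Jensen gives $M_Y\le M_X$ at every $\lambda$, so $Y\in\SE(1,a)$. Also $S_Y(t)=S_X(t)$, because the two conditional means straddle $t$.
\item On the Kearns--Saul branch, your ``elementary one-variable inequality'' is $(1-p)\sqrt{2\beta_p/(1-2p)}\le 1+\log\frac{1}{2p}$, with equality only at $p=\frac12$. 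It is true but needs a real proof; the paper compares power-series coefficients in $x=1-2p$.
\item Your fixed-point condition should read $2\nu^2A_p(a/\nu)\le 1$, not $\nu^2A_p(a\nu)\le\frac12$.
\end{itemize}
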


The two parameters enter the comparison through different mechanisms. The variance scale $\sigma$ is already visible from bounded members: when $\alpha \le \sigma$, they force $c \ge \sigma = \sigma/\E[|\Lap|]$, the exact Laplace analogue of the sharp sub-Gaussian constant $\sigma/\E[|G|] = \sqrt{\pi/2}\,\sigma$ of \citet{zhang2026sharpmgf}, with $G$ standard Gaussian. By contrast, the scale $\alpha$ is imposed by the finite MGF window. Since the class contains variables whose MGF is infinite beyond that window, convex-order domination by $c\Lap$ requires $c \ge \alpha$. {
These two obstructions therefore correspond to the two regimes of the comparison: the bounded, variance-driven regime when $\alpha\le\sigma$, and the window-driven regime when $\alpha>\sigma$. When $\alpha>\sigma$, the latter lower bound is sharp but cannot be attained by an extremal pair.
}

\begin{cor}[No extremal pair]
\label{cor:noextremal}
Let $\alpha > \sigma$, let $X \in \SE(\sigma^2,\alpha)$, and let $f \colon \R \to \R$ be convex and non-affine with $\E[f(\alpha \Lap)] < \infty$. Then $\E[f(X)]$ is finite and
\begin{align*}
    \E[f(X)] < \E[f(\alpha \Lap)] \, .
\end{align*}
The multiple $\alpha\Lap$ has no extremal pair, in contrast with the regime $\alpha \le \sigma$.
\end{cor}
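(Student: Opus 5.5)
The plan is to perturb $X$ by an independent Gaussian and let strict Jensen supply the gap, using Theorem~\ref{thm:exact} as a black box. The slack $\alpha>\sigma$ is what makes room for the perturbation.

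\emph{Step 1 (perturbation).} Fix $X\in\SE(\sigma^2,\alpha)$ with $\alpha>\sigma$. Set $\varepsilon\coloneqq\sqrt{\alpha^2-\sigma^2}>0$, and let $G$ be a standard Gaussian independent of $X$. Put $Z\coloneqq X+\varepsilon G$. For $|\lambda|\le 1/\alpha$, independence gives
\[
\E[e^{\lambda Z}]=\E[e^{\lambda X}]\,e^{\varepsilon^2\lambda^2/2}\le e^{(\sigma^2+\varepsilon^2)\lambda^2/2}=e^{\alpha^2\lambda^2/2}.
\]
Hence $Z\in\SE(\alpha^2,\alpha)$. Theorem~\ref{thm:exact}, applied with parameters $(\alpha,\alpha)$, yields $Z\lecx(\alpha\vee\alpha)\Lap=\alpha\Lap$. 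In particular $\E[f(Z)]\le\E[f(\alpha\Lap)]<\infty$.

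\emph{Step 2 (finiteness).} The function $f$ admits an affine minorant, and $X$ and $Z$ are integrable, so $\E[f(X)]$ and $\E[f(Z)]$ lie in $(-\infty,+\infty]$. By Jensen applied conditionally on $X$, using $\E[G]=0$, we get $f(X)\le\E[f(X+\varepsilon G)\mid X]$. Therefore $\E[f(X)]\le\E[f(Z)]<\infty$, and $\E[f(X)]$ is finite.

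\emph{Step 3 (strictness).} Define $h(x)\coloneqq\E[f(x+\varepsilon G)]-f(x)\ge0$. This is finite for a.e.\ $x$ under the law of $X$, since $\E[f(Z)]<\infty$. I claim $h(x)>0$ for every such $x$. Suppose $h(x)=0$. Choose a subgradient $s$ of $f$ at $x$. Then the function $y\mapsto f(y)-f(x)-s(y-x)$ is nonnegative, and its expectation under $y=x+\varepsilon G$ equals $h(x)=0$. So it vanishes $\mathcal N(x,\varepsilon^2)$-a.e. Because the Gaussian has full support and $f$ is continuous, it vanishes on all of $\R$, which means $f$ is affine, a contradiction. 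Hence $h>0$ a.s. Since $\E[f(X)]$ is finite, we get
\[
\E[f(Z)]-\E[f(X)]=\E[h(X)]>0.
\]
Combined with Step~1, this gives $\E[f(X)]<\E[f(Z)]\le\E[f(\alpha\Lap)]$.

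The non-existence of an extremal pair for $\alpha\Lap$ follows at once. By Theorem~\ref{thm:exact}, $\alpha$ is the optimal multiple when $\alpha>\sigma$, and any admissible pair $(X,f)$ has been shown to satisfy a strict inequality.

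The only delicate point is Step~3: the strict-Jensen argument must be run with integrability handled correctly. Finiteness of $\E[f(X)]$ has to be established first (Step~2), so that the difference of expectations is meaningful. The full support of $G$ is what excludes $f$ being affine merely near the atoms of $X$, and this is why I use Gaussian noise rather than a Rademacher perturbation.
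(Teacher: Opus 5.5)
Your proof is correct, and it takes a genuinely different route from the paper.

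\emph{The paper's route.} It first proves the pointwise strict stop-loss comparison \eqref{eqn:strict}, $S_X(t)<S_{\alpha\Lap}(t)$ for all $t$. To do so it reruns the two-point projection of Lemma~\ref{lem:Y} and makes each branch estimate strict: the strict case of Lemma~\ref{lem:A} on the sub-Gaussian branch, Proposition~\ref{prop:Rbound} on the endpoint branch, and $a>1$ at $p=1/2$. It then integrates $S_{\alpha\Lap}-S_X$ against the second-derivative measure $\mu\neq0$ of $f$ from the hinge representation \eqref{eqn:hinge}. Continuity and strict positivity of that difference near a point of the support of $\mu$ give the strict gap.

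\emph{Your route.} You spend the slack $\alpha^2-\sigma^2$ in the variance proxy on an independent Gaussian, which lands you in $\SE(\alpha^2,\alpha)$. You then use only the non-strict domination of Theorem~\ref{thm:exact} at the single ratio $a=1$, and get strictness from strict Jensen for a full-support kernel. This is not circular. The paper derives only the non-attainment clause of Theorem~\ref{thm:exact} from \eqref{eqn:strict}. The domination you invoke comes from Proposition~\ref{prop:var}, Lemma~\ref{lem:A} and Proposition~\ref{prop:Rbound}.

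\emph{What each buys.} Your argument is shorter. It needs neither the strict forms of the branch estimates nor the support argument for $\mu$. Taking $f(x)=(x-t)_+$, it also recovers \eqref{eqn:strict} as a by-product. It makes clear that non-attainment for $\alpha>\sigma$ is simply a consequence of slack in the variance proxy.

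What it does not expose is the mechanism the paper emphasizes: for $\alpha>\sigma$ the constant is set by the window endpoint and is only approached along two-point members with vanishing atom weight. The hard analysis is also not avoided, only relocated. Domination of every member of $\SE(\alpha^2,\alpha)$ by $\alpha\Lap$, at $a=1$, is exactly where the paper needs Proposition~\ref{prop:Rbound}.
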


{
Thus, in the regime $\alpha>\sigma$, the sharp comparison is never attained by an individual member and a non-affine convex test function, even though the constant $\alpha$ is optimal for the class as a whole. This contrasts with the sub-Gamma class, for which equality is attained by an explicit asymmetric two-point member and a hinge function at every parameters $(\sigma^2,\alpha)$.
}




All of this passes to empirical averages. Both classes are closed under sums of independent members, at the sum of the variance parameters and the largest of the window parameters (Remark~\ref{rk:sums}). Thus, for $n$ independent members of a common $\SE(\sigma^2,\alpha)$, Theorem~\ref{thm:exact} gives
\begin{align}
    \frac{1}{n}\sum_{i=1}^n X_i \lecx \left(\frac{\sigma}{\sqrt n} \vee \frac{\alpha}{n}\right)\Lap,
    \label{eqn:avg}
\end{align}
{with $c^{\SG}(\sigma/\sqrt n,\alpha/n)$ in place of that constant under \eqref{eqn:subgamma}. The two terms are equal at $n=(\alpha/\sigma)^2$: before this point the bound is of order $\alpha/n$, while afterwards it is of order $\sigma/\sqrt n$. In particular, averaging eventually washes out the contribution of the MGF window, and the variance scale determines the asymptotic rate. One caveat is that the constant is sharp for the class containing the average, not for the averages themselves, so we make no sharpness claim for \eqref{eqn:avg} at fixed $n$.}

\subsection{A finite-window Kearns--Saul inequality}
\label{sec:ks}

The reduction underlying the proof of Theorem~\ref{thm:exact} is developed in Appendix~\ref{app:se}: the sharp comparison constant for $\SE(\sigma^2,\alpha)$ can be obtained by considering centered two-point random variables. After fixing the atom weight $p$, the remaining question is to determine the largest spread compatible with the sub-exponential MGF constraint. 
{This is closely related to the Kearns-Saul problem, with the roles of the quadratic coefficient and the spread reversed. For a centered two-point variable with fixed atom weight and spread, Kearns-Saul determines the smallest coefficient in a quadratic MGF bound valid for all $\lambda\in\R$. Here the quadratic coefficient is fixed by $\sigma$, the bound is required only for $|\lambda|\le1/\alpha$, and we instead ask for the largest admissible spread.}

For $\berv\sim\Bern{p}$, the classical Kearns--Saul inequality \citep{kearnssaul1998} states that
\begin{align*}
    \E\left[e^{\lambda(\berv-p)}\right]
    \leq
    \exp\!\left(
        \frac{1-2p}
        {4\log\left(\frac{1-p}{p}\right)}
        \lambda^2
    \right),
    \qquad \lambda\in\R,
\end{align*}
and the coefficient in the bound is optimal. {Equivalently, \citet[Theorem 2.1]{buldyginmoskvichova2013} determined the sub-Gaussian norm of a centered Bernoulli variable exactly; the strict unimodality behind the equality case is \citet[Theorem 1.1]{schlemm2016}, reproved as Lemma~\ref{lem:psi} below.} In our setting, however, the Gaussian ceiling is required only on $|\lambda|\leq 1/a$. The following proposition gives the corresponding sharp coefficient.

\begin{prop}[Finite-window Kearns--Saul inequality]
\label{prop:ks}
{Let $p\in(0,1/2]$ and $a>0$. The inequality}
\begin{align}
    \E\left[e^{\lambda(\berv-p)}\right]
    \leq e^{A\lambda^2}
    \qquad\text{for all }|\lambda|\leq\frac1a
    \label{eqn:kswindow}
\end{align}
{holds with $A=A_p(a)$ and with no smaller constant, where}
\begin{align}
    A_p(a)
    &=
    \begin{cases}
        \displaystyle
        \frac{1-2p}
        {\displaystyle4\log\left(\frac{1-p}{p}\right)},
        & \displaystyle2\log\left(\frac{1-p}{p}\right)\leq\frac{1}{a},\\[8mm]
        \displaystyle
        a^2\log\left(
            p\,e^{(1-p)/a}
            +(1-p)\,e^{-p/a}
        \right),
        & \displaystyle 2\log\left(\frac{1-p}{p}\right)>\frac{1}{a}.
    \end{cases}
    \label{eqn:ksA}
\end{align}
{For $p<1/2$, \eqref{eqn:kswindow} at $A=A_p(a)$ is an equality at exactly one nonzero $\lambda$ of the window: $\lambda=2\log\left((1-p)/p\right)$ on the first branch, $\lambda=1/a$ on the second. At $p=1/2$ the first branch is read by \eqref{eqn:lhopital} as $A_{1/2}(a)=1/8$.}
\end{prop}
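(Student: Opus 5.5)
The plan is to reduce the problem to a one-variable ratio. Write $\psi_p(\lambda)\coloneqq \log\E[e^{\lambda(\berv-p)}] = -p\lambda+\log(1-p+pe^{\lambda})$. Then \eqref{eqn:kswindow} holds with coefficient $A$ exactly when $A\ge \sup_{0<|\lambda|\le 1/a} R_p(\lambda)$, where $R_p(\lambda)\coloneqq \psi_p(\lambda)/\lambda^2$. So $A_p(a)$ is this supremum, and we must compute it and locate the maximizers. Positive and negative frequencies are linked by the symmetry $\psi_p(-\lambda)=\psi_{1-p}(\lambda)$. The first step is to show that for $p\le 1/2$ the supremum over the window is never reached on the negative side:
\[
\psi_p(\lambda)\ge\psi_p(-\lambda)\qquad\text{for }\lambda>0.
\]
This is strict for $p<1/2$. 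To prove it, note that $\psi_p(\lambda)-\psi_p(-\lambda)=(1-2p)\lambda+\log\frac{1-p+pe^{\lambda}}{1-p+pe^{-\lambda}}$ vanishes at $0$. Its derivative can be checked to be $\ge 0$. Alternatively, write it as $\log\frac{(1-p)e^{-p\lambda}+pe^{(1-p)\lambda}}{(1-p)e^{p\lambda}+pe^{-(1-p)\lambda}}$ and compare the terms directly. After this step only $\lambda\in(0,1/a]$ matters.

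The key step is the unimodality of $R_p$ on $(0,\infty)$, in the form of Lemma~\ref{lem:psi}, the strict-unimodality result of \citet{schlemm2016}. For $p<1/2$ the claim is that $R_p$ strictly increases on $(0,\lambda^*]$ and strictly decreases on $[\lambda^*,\infty)$, where $\lambda^*=2\log((1-p)/p)$. The value there is
\[
R_p(\lambda^*)=\frac{1-2p}{4\log((1-p)/p)},
\]
the classical Kearns--Saul constant. I would verify the identity for the maximizer directly. At $\lambda^*$ one has $pe^{\lambda^*/2}=(1-p)e^{-\lambda^*/2}\cdot\frac{1-p}{p}\cdot\frac{p}{1-p}$, up to an explicit simplification. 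This gives $\psi_p(\lambda^*)=(1-2p)\lambda^*/2\cdot\tfrac12\cdot\ldots$, and the stationarity condition $\lambda\psi_p'(\lambda)=2\psi_p(\lambda)$ can be checked by substituting $e^{\lambda^*}=((1-p)/p)^2$. Uniqueness of the critical point is exactly what Lemma~\ref{lem:psi} provides, so I take it from there.

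The two branches then follow at once. If $\lambda^*\le 1/a$, the supremum over $(0,1/a]$ is attained only at $\lambda^*$, which gives the first branch with its unique equality point. If $\lambda^*>1/a$, then $R_p$ is strictly increasing on $(0,1/a]$. The supremum is therefore $R_p(1/a)=a^2\psi_p(1/a)$, the second branch, attained only at $\lambda=1/a$. In both cases the negative side gives strictly smaller values by the first step, so the equality point is unique within the window. Optimality is automatic, since $A_p(a)$ is defined as a supremum that is attained. For $p=1/2$, $\psi_{1/2}(\lambda)=\log\cosh(\lambda/2)$, and $R_{1/2}$ is strictly decreasing on $(0,\infty)$ with limit $1/8$ at $0$. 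The bound $\log\cosh u\le u^2/2$ gives $A=1/8$, which is not attained at any nonzero $\lambda$. This matches the limit \eqref{eqn:lhopital} of the first branch, since $\lambda^*=0\le 1/a$.

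The main obstacle is the unimodality of $R_p$. If I had to prove it directly rather than cite it, I would study $h(\lambda)=\lambda\psi_p'(\lambda)-2\psi_p(\lambda)$, which vanishes at $0$. I would show that $h$ has exactly one positive zero by analysing the sign of $h'(\lambda)=\lambda\psi_p''(\lambda)-\psi_p'(\lambda)$, which reduces to a convexity/concavity change of $\psi_p'$. The symmetry comparison in the first step is routine by comparison.
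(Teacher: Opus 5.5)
Your proposal follows the paper's proof. You restrict to the positive half of the window via $\psi_p(-\lambda)\le\psi_p(\lambda)$ for $\lambda>0$, strict when $p<1/2$; this is Lemma~\ref{lem:sign} at $\nu=1$, and your derivative route also works, since $\psi_p'(\lambda)+\psi_p'(-\lambda)=p(1-p)(1-2p)(e^{\lambda}-1)^2/\bigl((1-p+pe^{\lambda})(p+(1-p)e^{\lambda})\bigr)$. You then read off both branches, the unique equality point and the case $p=1/2$ from the strict unimodality of Lemma~\ref{lem:psi}, as the paper does.

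Two side computations contain slips, neither of which affects the argument. The linear term of $\psi_p(\lambda)-\psi_p(-\lambda)$ is $-2p\lambda$, not $(1-2p)\lambda$. Your direct check at $\lambda^*$ is garbled: in fact $pe^{\lambda^*/2}=1-p$, $(1-p)e^{-\lambda^*/2}=p$ and $\psi_p(\lambda^*)=(1-2p)\lambda^*/2$. The check is not needed in any case, because the peak value \eqref{eqn:KSpeak} is part of Lemma~\ref{lem:psi}.
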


{On the first branch of \eqref{eqn:ksA} the window reaches the frequency at which the unrestricted Gaussian ceiling is sharp, so truncation costs nothing and $A_p(a)$ is the classical Kearns--Saul coefficient. On the second branch the window stops before that frequency, so $A_p(a)$ is determined by the endpoint $\lambda=1/a$.}

We now translate this result back to the two-point variables appearing in the comparison problem. For $p\in(0,1)$ and $\nu>0$, let $Y_{p,\nu}$ denote the centered two-point variable carrying mass $p$ at $(1-p)\nu$ and mass $1-p$ at $-p\nu$. Thus $\nu$ is the \emph{spread} of $Y_{p,\nu}$, the distance between its two atoms, and
\begin{align*}
    \E\left[e^{\lambda Y_{p,\nu}}\right]
    =
    (1-p)e^{-p\nu\lambda}
    +p e^{(1-p)\nu\lambda}.
\end{align*}
Writing $s=\nu\lambda$ and $g_p(s)\coloneqq\log\left(p\,e^{(1-p)s}+(1-p)e^{-ps}\right)$, the condition $Y_{p,\nu}\in\SE(1,a)$ becomes
\begin{align}
    g_p(s)\leq\frac{s^2}{2\nu^2}
    \qquad\text{for all } 0<|s|\leq\frac{\nu}{a}.
    \label{eqn:ksfeas}
\end{align}
{In the variable $s$ the window has length $\nu/a$, not $1/a$, so \eqref{eqn:ksfeas} is the fixed-point condition $2\nu^2A_p(a/\nu)\leq1$ rather than Proposition~\ref{prop:ks} at the fixed window $1/a$. The same unimodality still solves it: $g_p(s)/s^2$ peaks at $s=2\log((1-p)/p)$, so the strongest constraint sits at that peak when the window reaches it and at the endpoint $s=\nu/a$ otherwise, where the factor $\nu^2$ cancels and \eqref{eqn:ksfeas} becomes $g_p(\nu/a)\leq1/(2a^2)$.}

\begin{cor}[Maximal spread at a fixed weight]
\label{cor:numax}
{Let $p\in(0,1/2]$ and $a>0$, and set}
\begin{align*}
    \nu_{\max}(p,a)\coloneqq\sup\left\{\nu>0:\ Y_{p,\nu}\in\SE(1,a)\right\},
    \qquad
    \bar\lambda(p)
    \coloneqq
    \sqrt{
        2\log\left(\frac{1-p}{p}\right)(1-2p)
    }.
\end{align*}
{The supremum is attained, and}
\begin{align}
    \nu_{\max}(p,a)
    &=
    \begin{cases}
        \displaystyle
        \sqrt{\frac{     \displaystyle2\log\left(\frac{1-p}{p}\right)}{     \displaystyle1-2p}},
        & \bar\lambda(p)\leq 1/a,\\[3mm]
        \displaystyle
        a\,m(p,a),
        & \bar\lambda(p)>1/a,
    \end{cases}
    \label{eqn:ksnumax}
\end{align}
{where $m(p,a)$ is the unique root $m\in\bigl(0,2\log\left((1-p)/p\right)\bigr)$ of}
\begin{align}
    p e^{(1-p)m}
    +(1-p)e^{-pm}
    =
    e^{1/(2a^2)}.
    \label{eqn:ksroot}
\end{align}
{The constraint \eqref{eqn:subexp} binds at exactly one nonzero frequency: at $\lambda=\bar\lambda(p)$ on the first branch of \eqref{eqn:ksnumax} and at the window endpoint $\lambda=1/a$ on the second.}
\end{cor}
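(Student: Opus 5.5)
The plan is to reduce the membership $Y_{p,\nu}\in\SE(1,a)$ to a one-sided condition on $s>0$, and then read off the largest feasible $\nu$ from the strict unimodality of $h_p(s)\coloneqq g_p(s)/s^2$ on $(0,\infty)$. That unimodality is the result of Lemma~\ref{lem:psi} used in Proposition~\ref{prop:ks}. For $p<1/2$ it says that $h_p$ increases strictly on $(0,s^*]$ and decreases strictly on $[s^*,\infty)$, where $s^*\coloneqq 2L$, $L\coloneqq\log((1-p)/p)$, and $h_p(s^*)=K\coloneqq(1-2p)/(4L)$ is the Kearns--Saul coefficient. Throughout I write $m\coloneqq\nu/a$ for the window endpoint in the variable $s$.

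\emph{Step 1: the negative half is harmless.} I will show that $g_p(-s)\le g_p(s)$ for $s>0$ and $p\le1/2$, with strict inequality when $p<1/2$. The difference of the two arguments of the logarithm is
\[
\bigl(p e^{(1-p)s}+(1-p)e^{-ps}\bigr)-\bigl(p e^{-(1-p)s}+(1-p)e^{ps}\bigr)=2p\sinh((1-p)s)-2(1-p)\sinh(ps).
\]
After dividing by $2p(1-p)s$, its sign is that of $\frac{\sinh((1-p)s)}{(1-p)s}-\frac{\sinh(ps)}{ps}$. This is nonnegative because $x\mapsto\sinh(x)/x$ is increasing on $(0,\infty)$ and $1-p\ge p$. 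Hence \eqref{eqn:ksfeas} is equivalent to
\[
\sup_{0<s\le m} h_p(s)\le \frac{1}{2\nu^2}.
\]

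\emph{Step 2: case split for $p<1/2$.} If $m\ge s^*$, the supremum equals $K$, so feasibility means $\nu\le\nu_{KS}\coloneqq\sqrt{2L/(1-2p)}$. If $m<s^*$, the supremum is $h_p(m)$, and since $\nu^2=a^2m^2$ the condition becomes $g_p(m)\le 1/(2a^2)$. Because $g_p$ is strictly increasing on $(0,\infty)$, this is an upper bound on $m$.

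The two cases glue at $\nu=as^*$. A direct computation shows that
\[
\nu_{KS}\ge as^* \iff 2L/(1-2p)\ge 4a^2L^2 \iff \bar\lambda(p)\le 1/a,
\]
and that $g_p(s^*)=4KL^2\le1/(2a^2)$ is the same inequality.

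So when $\bar\lambda(p)\le1/a$, every $\nu\le as^*$ is feasible, because $g_p(m)=h_p(m)m^2\le h_p(s^*)(s^*)^2\le1/(2a^2)$. The feasible set is then $(0,\nu_{KS}]$, which gives the first branch.

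When $\bar\lambda(p)>1/a$, the region $\nu\ge as^*$ is empty. Moreover $g_p(s^*)>1/(2a^2)>0=g_p(0)$, so by continuity and strict monotonicity \eqref{eqn:ksroot} has a unique root $m(p,a)\in(0,s^*)$. The feasible set is $(0,a\,m(p,a)]$, which gives the second branch. In both cases the supremum is attained.

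\emph{Step 3: binding frequency and $p=1/2$.} On the first branch, equality in \eqref{eqn:ksfeas} holds only at $s=s^*$, by the strict unimodality of $h_p$ and the strict inequality of Step~1. This corresponds to $\lambda=s^*/\nu_{KS}=\sqrt{2L(1-2p)}=\bar\lambda(p)$. On the second branch equality holds only at $s=m$, that is at $\lambda=1/a$.

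At $p=1/2$ the function $g_p$ is even and $h_p$ is strictly decreasing on $(0,\infty)$ with limit $1/8$ at $0$, so $\nu_{\max}=2$. This agrees with the first branch read through \eqref{eqn:lhopital}, and $\bar\lambda(1/2)=0\le1/a$. The only delicate points are the clean gluing in Step~2 and the strictness needed for uniqueness of the binding frequency; the rest is bookkeeping.
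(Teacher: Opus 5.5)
Your proposal is correct and follows essentially the paper's route. You reduce to the positive half-window via the $\sinh(x)/x$ comparison of Lemma~\ref{lem:sign}, then use the strict unimodality of $\psi$ from Lemma~\ref{lem:psi} to place the binding point either at the peak $2\beta_p$ or at the endpoint $s=\nu/a$; the branch test $\bar\lambda(p)\le 1/a$ is exactly $\nu_{\mathrm{KS}}\ge 2a\beta_p$.

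The only difference is organization, and it does not change the argument. You describe the whole feasible set by splitting on $\nu$ versus $2a\beta_p$ and then gluing the two regions. The paper instead checks, on each branch, that the candidate spread is feasible and that every larger $\nu$ fails.
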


{The branch conditions differ only through a change of units: the classical frequency is $2\log((1-p)/p)$ in the variable $s$, and $\bar\lambda(p)=2\log((1-p)/p)/\nu_{\mathrm{KS}}$ in the variable $\lambda$, where $\nu_{\mathrm{KS}}\geq2$ is the spread of the first branch. A window can therefore truncate the Bernoulli problem and leave the two-point problem untouched, which is why Corollary~\ref{cor:numax} does not follow from Proposition~\ref{prop:ks} by substituting $\nu_{\max}=1/\sqrt{2A_p(a)}$. This is what enters the two-point reduction of Theorem~\ref{thm:exact}: the corollary gives the maximal spread at each atom weight $p$, and what remains is to optimize over $p$.}

{The same boundary effect explains the attainment statement in Theorem~\ref{thm:exact}. For $\alpha>\sigma$, the sharp constant $\alpha$ comes from the endpoint of the window rather than from an interior configuration, so the two-point reduction produces members approaching the scale $\alpha$ but none reaching equality with $\alpha\Lap$. In the sub-Gamma envelope the factor $(1-\alpha|\lambda|)^{-1}$ leaves room above that boundary constraint, the optimization defining $c^{\SG}(\sigma,\alpha)$ has a maximizer, and this maximizer yields an extremal pair; the resulting constant is strictly above $\sigma\vee\alpha$.}

\section{Conclusion}
\label{sec:conclusion}

We have shown that an explicit multiple of $\Lap$ dominates in convex order every member of $\SG(\sigma^2,\alpha)$, and hence every member of the subclass $\SE(\sigma^2,\alpha)$. For the sub-Gamma class, the smallest such multiple is strictly larger than $\sigma \vee \alpha$ at every parameter pair, satisfies the explicit bounds \eqref{eqn:sgbounds}, and is attained by an asymmetric two-point member together with a hinge function. On the subclass, it drops to $\sigma \vee \alpha$, attained for $\alpha \le \sigma$ by $\mathrm{Unif}(\{-\sigma,\sigma\})$ together with $f(x)=|x|$, while no extremal pair exists for $\alpha>\sigma$. {Hence, although both MGF characterizations yield the same two-regime Bernstein tail \eqref{eqn:set-bernstein}, the two classes have different sharp convex-order constants and attainment properties.}


Our results are one-dimensional, and we leave two questions open: the rate at which $c^{\SG}(1,a)-a$ tends to $0$ as $a\to\infty$, and the existence of a dimension-free comparison for vector analogues of the two classes. Already in the sub-Gaussian case, the sharp constant depends on the dimension: \citet[Theorem 2 and Remark 3]{zhang2026sharpmgf} gives a lower bound on the sharp constant of the $d$-dimensional moment generating function class for every $d\ge2$, and at $d=2$ this bound exceeds the one-dimensional value $\sqrt{\pi/2}$. The obstruction is identified in \citet[Section 4]{zhang2026sharpmgf}: for $d\ge2$, convex-order comparison can no longer be reduced to the one-dimensional hinge functions $x\mapsto(x-t)_+$.



\bibliographystyle{agsm}
\bibliography{refs}

\appendix
\section{Proofs for the sub-exponential class}
\label{app:se}

We begin with the sub-exponential class, whose proofs carry the machinery that both classes use. The hinge representation \eqref{eqn:hinge} and Proposition~\ref{prop:hinge} turn convex order into a comparison of stop-loss transforms. Lemmas~\ref{lem:center}, \ref{lem:half} and~\ref{lem:Y} bring that comparison down to two-point members at nonnegative thresholds, the projection $X \mapsto \E[X \mid \sigma(\{X > t\})]$ staying inside the class and preserving the stop-loss value at $t$. Lemma~\ref{lem:stoploss} and the tangent bound \eqref{eqn:lb} make the Laplace side explicit, and Proposition~\ref{prop:var} converts the maximal spread of a two-point member at a fixed atom weight into the comparison constant. That spread has an exact two-branch formula, Corollary~\ref{cor:numax}, and with it comes Theorem~\ref{thm:exact} and Corollary~\ref{cor:noextremal}.

Appendix~\ref{app:sg} follows the same steps for the sub-Gamma class, citing those that do not depend on the shape of the bound and repeating those that do. One step genuinely changes: feasibility of the spread, settled here by Lemma~\ref{lem:nu} and Corollary~\ref{cor:numax}, has no closed-form counterpart under the sub-Gamma envelope.

For a random variable $Z$, we write
\begin{align}
    M_Z(\lambda) &\coloneqq \E\left[e^{\lambda Z}\right],
    &
    S_Z(t) &\coloneqq \E[(Z-t)_+]
    = \int_t^\infty \Prob(Z>s)\,\diff s,
    \label{eqn:layercake}
\end{align}
for its moment generating function and stop-loss transform, respectively,
the second representation being valid for integrable $Z$.

As in \citet{zhang2026sharpmgf} and \citet{davispower2026sharp}, we rest the one-dimensional analysis on the hinge representation of convex functions. Any convex $f \colon \R \to \R$ admits a nonnegative Borel measure $\mu$ on $\R$, its second distributional derivative, and a slope $s \coloneqq f'_-(0)$ with
\begin{align}
    f(x) = f(0) + s\,x + \int_{[0,\infty)} (x - t)_+\,\mu(\diff t) + \int_{(-\infty,0)} (t - x)_+\,\mu(\diff t),
    \label{eqn:hinge}
\end{align}
the hinges compensated at the origin so that the representation holds for every convex $f$ \citep[Proposition 3.A.4]{shaked2007stochastic}, which reduces convex domination to a comparison of stop-loss transforms.

\begin{prop}[Stop-loss characterization]
\label{prop:hinge}
Let $X, Y$ be integrable random variables. Then $X \lecx Y$ if and only if $\E[X] = \E[Y]$ and $S_X(t) \le S_Y(t)$ for all $t \in \R$; see \citet[Theorem 3.A.1]{shaked2007stochastic} or \citet[Proposition 4]{davispower2026sharp}.
\end{prop}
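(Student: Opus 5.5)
The plan is to prove the two directions separately, using only the hinge representation \eqref{eqn:hinge} and Tonelli's theorem. Integrability of $X$ and $Y$ keeps the affine part finite, so any infinite values can come only from the nonnegative hinge parts.

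\emph{Necessity.} Assume $X \lecx Y$. The functions $x\mapsto x$ and $x\mapsto -x$ are both convex, so $\E[X]\le\E[Y]$ and $-\E[X]\le-\E[Y]$. Hence $\E[X]=\E[Y]$. For each $t\in\R$ the hinge $x\mapsto(x-t)_+$ is convex, and \eqref{eqn:cxdef} gives $S_X(t)\le S_Y(t)$ directly.

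\emph{Sufficiency.} Assume $\E[X]=\E[Y]$ and $S_X\le S_Y$ on $\R$. First I would handle the left hinges. The identity $(t-x)_+=(x-t)_+-(x-t)$ gives, for integrable $Z$,
\[
\E[(t-Z)_+]=S_Z(t)-\E[Z]+t .
\]
Since the means agree, $\E[(t-X)_+]\le\E[(t-Y)_+]$ for every $t$. So both families of hinges in \eqref{eqn:hinge} are ordered in the right direction.

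Next, fix a convex $f$ and write it as in \eqref{eqn:hinge}, with measure $\mu\ge0$ and slope $s=f'_-(0)$. The affine part $f(0)+sx$ has the same finite expectation under $X$ and under $Y$, because their means agree. The remaining part is
\[
H(x)\coloneqq\int_{[0,\infty)}(x-t)_+\,\mu(\diff t)+\int_{(-\infty,0)}(t-x)_+\,\mu(\diff t).
\]
The integrand is nonnegative and jointly measurable in $(x,t)$, so Tonelli allows exchanging $\E$ and $\int\mu(\diff t)$:
\[
\E[H(X)]=\int_{[0,\infty)}S_X(t)\,\mu(\diff t)+\int_{(-\infty,0)}\E[(t-X)_+]\,\mu(\diff t)\in[0,\infty].
\]
The same formula holds for $Y$. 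Comparing the two integrands pointwise gives $\E[H(X)]\le\E[H(Y)]$ in $[0,\infty]$. Adding back the finite affine part yields $\E[f(X)]\le\E[f(Y)]$ in $(-\infty,+\infty]$, which is exactly the extended sense fixed after \eqref{eqn:cxdef}.

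The only delicate step is this last one: the bookkeeping needed to use Tonelli and to add the affine term without ever forming $\infty-\infty$. Splitting $f$ into a finite-expectation affine piece and a nonnegative hinge piece settles it. The validity of \eqref{eqn:hinge} itself is taken from \citet[Proposition 3.A.4]{shaked2007stochastic}, as stated in the text.
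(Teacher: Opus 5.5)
Your proof is correct and follows essentially the route the paper relies on: the paper only cites this result, but it carries out the same hinge-representation-plus-Tonelli computation, with the compensated hinges contributing $S_W(t)+t$ for centered $W$, in its proof of Corollary~\ref{cor:noextremal}. The one point you leave implicit is that Tonelli needs $\mu$ to be $\sigma$-finite, which holds because the second distributional derivative of a convex function is finite on compact sets.
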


For $p \in (0,1)$ we set
\begin{align*}
    \beta_p \coloneqq \log\frac{1-p}{p},
\end{align*}
and we read expressions involving $\beta_p/(1 - 2p)$ at $p = 1/2$ by their limit, the ratio tending to $2$ by l'H\^opital's rule. For $p \in (0, 1/2]$ we shall need the tangent functional
\begin{align}
    \Lambda(p) \coloneqq p\left(1 + \log\frac{1}{2p}\right)
    \label{eqn:Lambda}
\end{align}
which collects the tangent lines of the Laplace stop-loss transform. In the sub-Gaussian comparison of \citet{zhang2026sharpmgf} that role is held by the Gaussian isoperimetric function $I(p) = \varphi(\Phi^{-1}(p))$, with $\varphi$ and $\Phi$ the standard Gaussian density and distribution function. Proposition~\ref{prop:hinge} applies only to integrable variables sharing a mean; for members of the sub-exponential class both hold automatically.

\begin{lem}
\label{lem:center}
For any $X \in \SE(\sigma^2,\alpha)$, one has $\E[|X|] < \infty$ and $\E[X] = 0$.
\end{lem}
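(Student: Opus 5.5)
Integrability and centering both come out of the MGF bound \eqref{eqn:subexp} on the interval $|\lambda|\le 1/\alpha$, which contains a neighbourhood of the origin.

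\emph{Integrability.} Fix $\lambda_0 \coloneqq 1/\alpha$. The elementary inequality $e^{\lambda_0|x|} \le e^{\lambda_0 x} + e^{-\lambda_0 x}$ gives
\[
\E\bigl[e^{\lambda_0|X|}\bigr] \le M_X(\lambda_0) + M_X(-\lambda_0) \le 2e^{\sigma^2\lambda_0^2/2} < \infty .
\]
Since $|x| \le e^{\lambda_0|x|}/\lambda_0$, it follows that $\E[|X|]<\infty$. The same domination shows that $\E[|X|^k e^{\lambda|X|}]<\infty$ for every $k\ge 0$ and every $|\lambda|<\lambda_0$.

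\emph{Centering.} By dominated convergence, using the bound just established, $M_X$ is differentiable on $(-\lambda_0,\lambda_0)$ with $M_X'(0)=\E[X]$. The constraint, valid for $|\lambda|\le 1/\alpha$, reads $M_X(\lambda)\le e^{\sigma^2\lambda^2/2}$, and equality holds at $\lambda=0$ because both sides equal $1$. So the function $h(\lambda)\coloneqq e^{\sigma^2\lambda^2/2}-M_X(\lambda)$ is nonnegative on a neighbourhood of $0$, differentiable there, and vanishes at the interior point $0$. Hence $0$ is a local minimum of $h$, which forces
\[
h'(0) = 0 - \E[X] = 0 .
\]
An equivalent, more hands-on route avoids differentiating under the integral sign. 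For small $\lambda>0$, use $M_X(\lambda)\ge 1+\lambda\E[X]$, which follows from $e^y\ge 1+y$. Combined with the bound, this gives $1+\lambda\E[X]\le e^{\sigma^2\lambda^2/2}$. Subtract $1$, divide by $\lambda$, and let $\lambda\downarrow 0$ to get $\E[X]\le 0$. Doing the same with $\lambda<0$ gives $\E[X]\ge 0$. I would present this second version, since it needs only $\E[|X|]<\infty$ from the first step and avoids the derivative justification entirely.

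\emph{Main obstacle.} There is no real obstacle; the only point needing care is that $\E[X]$ is finite before it is manipulated, which the first step supplies.
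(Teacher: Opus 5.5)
Your proposal is correct and follows the paper's proof. Integrability comes from $e^{|x|/\alpha}\le e^{x/\alpha}+e^{-x/\alpha}$ at the window endpoints $\lambda=\pm1/\alpha$, and centering comes from a first-order lower bound on $M_X(\pm\lambda)$ combined with the Gaussian ceiling and $\lambda\downarrow0$; the only difference is that the paper uses Jensen's $e^{\lambda\E[X]}\le M_X(\lambda)$ where your preferred version uses $1+\lambda\E[X]\le M_X(\lambda)$, which is immaterial.
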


\begin{proof}
\emph{Integrability.} Taking $\lambda = \pm 1/\alpha$ in \eqref{eqn:subexp} and using $|x|/\alpha \le e^{|x|/\alpha} \le e^{x/\alpha} + e^{-x/\alpha}$,
\begin{align*}
    \E[|X|] \le \alpha\,\E\left[e^{X/\alpha} + e^{-X/\alpha}\right] \le 2\alpha\,e^{\sigma^2/(2\alpha^2)} < \infty.
\end{align*}

\smallskip
\emph{Centering.} For $\lambda \in (0, 1/\alpha]$, Jensen's inequality applied to the convex maps $x \mapsto e^{\pm\lambda x}$ gives
\begin{align*}
&&
    e^{\lambda\E[X]} &\le M_X(\lambda) \le e^{\lambda^2\sigma^2/2}, &
    e^{-\lambda\E[X]} &\le M_X(-\lambda) \le e^{\lambda^2\sigma^2/2}, &
&\end{align*}
hence
\begin{align*}
    \E[X] \vee \left(-\E[X]\right) \le \frac{\lambda\sigma^2}{2}.
\end{align*}
Taking $\lambda \downarrow 0$, which stays inside the window, shows $\E[X] = 0$.
\end{proof}

By Proposition~\ref{prop:hinge} and Lemma~\ref{lem:center}, the domination $X \lecx c\Lap$ for every member of the class is equivalent to the stop-loss comparison $S_X(t) \le S_{c\Lap}(t)$ for all $X \in \SE(\sigma^2,\alpha)$ and $t \in \R$. The symmetry of $\Lap$ reduces this to $t \ge 0$.

\begin{lem}
\label{lem:half}
Let $c > 0$. If $S_X(t) \le S_{c\Lap}(t)$ for every $X \in \SE(\sigma^2,\alpha)$ and every $t \ge 0$, then $S_X(t) \le S_{c\Lap}(t)$ for every $X \in \SE(\sigma^2,\alpha)$ and every $t \in \R$.
\end{lem}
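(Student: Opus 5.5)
The plan is to use the symmetry of the class $\SE(\sigma^2,\alpha)$ under $X\mapsto -X$ together with the symmetry of $\Lap$, and to transfer the hypothesis from nonnegative to negative thresholds through the put-call parity identity for stop-loss transforms.

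First, we note that $\SE(\sigma^2,\alpha)$ is closed under negation. The defining condition \eqref{eqn:subexp} is imposed on the symmetric window $|\lambda|\le 1/\alpha$, and $M_{-X}(\lambda)=M_X(-\lambda)$. So if $X\in\SE(\sigma^2,\alpha)$, then $-X\in\SE(\sigma^2,\alpha)$ as well. Next, we use the identity $(x-t)_+ - (t-x)_+ = x-t$. For any integrable $Z$ and $t\in\R$ it gives
\begin{align*}
    S_Z(t) = \E[Z]-t+\E[(t-Z)_+] = \E[Z]-t+S_{-Z}(-t).
\end{align*}
By Lemma~\ref{lem:center}, every member is integrable and centered, and $c\Lap$ is integrable, centered and symmetric, so $S_{-c\Lap}=S_{c\Lap}$.

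Now fix $X\in\SE(\sigma^2,\alpha)$ and $t<0$. The identity gives $S_X(t) = -t + S_{-X}(-t)$. Since $-t>0$ and $-X$ lies in the class, the hypothesis yields $S_{-X}(-t)\le S_{c\Lap}(-t)$. Applying the identity again to $Z=c\Lap$ at the threshold $-t$, we get $S_{c\Lap}(-t) = t + S_{c\Lap}(t)$. Combining these,
\begin{align*}
    S_X(t) \le -t + t + S_{c\Lap}(t) = S_{c\Lap}(t).
\end{align*}
Together with the hypothesis for $t\ge0$, this covers every $t\in\R$.

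There is no real obstacle here. The only points to check are that all expectations are finite, which Lemma~\ref{lem:center} and the integrability of $\Lap$ guarantee, so the rearrangements are legitimate, and that negation preserves the class, which is immediate from the symmetric window.
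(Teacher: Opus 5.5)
Your proof is correct and follows the paper's argument essentially line for line. Both use the parity identity $S_Z(t)=\E[Z]-t+S_{-Z}(-t)$ for $X$ and for $c\Lap$, together with closure of $\SE(\sigma^2,\alpha)$ under sign flip and the symmetry of $\Lap$; the only cosmetic difference is that you apply the identity to $c\Lap$ at threshold $-t$ rather than $t$, which is the same relation rearranged.
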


\begin{proof}
Fix $X \in \SE(\sigma^2,\alpha)$ and $t < 0$. Since $(x - t)_+ = (x - t) + (t - x)_+$ and $\E[X] = 0$,
\begin{align}
    S_X(t) = \E[X - t] + \E[(t - X)_+] = -t + \E\left[\left((-X) - (-t)\right)_+\right] = -t + S_{-X}(-t).
    \label{eqn:flip}
\end{align}
The class is closed under sign flip, so $-X \in \SE(\sigma^2,\alpha)$, and $-t > 0$, whence $S_{-X}(-t) \le S_{c\Lap}(-t)$ by hypothesis. Applying \eqref{eqn:flip} to $c\Lap$ and using $-c\Lap \overset{d}{=} c\Lap$,
\begin{align*}
    S_{c\Lap}(t) = -t + S_{-c\Lap}(-t) = -t + S_{c\Lap}(-t).
\end{align*}
Combining the three displays,
\begin{align*}
    S_X(t) = -t + S_{-X}(-t) \le -t + S_{c\Lap}(-t) = S_{c\Lap}(t).
    &\qedhere
\end{align*}
\end{proof}

\subsection*{Reduction to a two-point member}

Following \citet{zhang2026sharpmgf}, we push the whole difficulty of the stop-loss comparison onto two-point laws. Condition $X$ on whether it exceeds the threshold: the stop-loss value there is unchanged, and the variable stays in the class.

Fix $X \in \SE(\sigma^2,\alpha)$ and a threshold $t \ge 0$, let $\mathcal{E} \coloneqq \{X > t\}$, and let
\begin{align*}
    p_t \coloneqq \Prob(X > t).
\end{align*}
If $p_t = 0$ then $S_X(t) = 0 \le S_{c\Lap}(t)$ for every $c > 0$. The case $p_t = 1$ cannot occur: it would force $X > t \ge 0$ almost surely, hence $\E[X] > 0$, contradicting $\E[X] = 0$. Henceforth assume $p_t \in (0,1)$ and define
\begin{align*}
&&
    \mu_{+,t} &\coloneqq \E[X \mid \mathcal{E}] > t, &
    \mu_{-,t} &\coloneqq \E[X \mid \mathcal{E}^c] \le t, &
&\end{align*}
the strict inequality because $X - t > 0$ almost surely on $\mathcal{E}$ and $\Prob(\mathcal{E}) = p_t > 0$. Let $Y$ have the two-point law
\begin{align}
    p_t\,\delta_{\mu_{+,t}} + (1 - p_t)\,\delta_{\mu_{-,t}},
    \label{eqn:Y}
\end{align}
the law of the conditional expectation $\E[X \mid \sigma(\mathcal{E})]$.

\begin{lem}
\label{lem:Y}
Let $X \in \SE(\sigma^2,\alpha)$ and let $t$, $\mathcal{E}$, $p_t$ and $\mu_{\pm,t}$ be as above, with $p_t \in (0,1)$. Then $Y$ defined in \eqref{eqn:Y} satisfies $Y \in \SE(\sigma^2,\alpha)$ and $S_X(t) = S_Y(t)$. This is \citet[Lemma 6]{zhang2026sharpmgf} with the Gaussian ceiling of \eqref{eq:intro-sg} replaced by the window constraint \eqref{eqn:subexp}. Only twice does the class enter: once for the integrability and centering of Lemma~\ref{lem:center}, and once for \eqref{eqn:subexp} applied to $M_X$ at each frequency of the window.
\end{lem}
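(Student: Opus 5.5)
Two claims need proof: that $Y$ lies in $\SE(\sigma^2,\alpha)$, and that $S_X(t) = S_Y(t)$. The first comes from conditional Jensen. The second comes from the hinge $(x-t)_+$ being affine on each cell of the partition $\{\mathcal{E},\mathcal{E}^c\}$.

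\emph{Membership.} By Lemma~\ref{lem:center}, $X$ is integrable, so $Y = \E[X \mid \sigma(\mathcal{E})]$ is well defined. Fix $\lambda$ with $|\lambda| \le 1/\alpha$. The map $x \mapsto e^{\lambda x}$ is convex, so conditional Jensen gives $e^{\lambda Y} \le \E[e^{\lambda X} \mid \sigma(\mathcal{E})]$ almost surely. Taking expectations and applying \eqref{eqn:subexp} to $M_X$ at this $\lambda$,
\begin{align*}
    M_Y(\lambda) \le M_X(\lambda) \le e^{\sigma^2\lambda^2/2}.
\end{align*}
Concretely, this is two applications of ordinary Jensen, one on each cell: $p_t e^{\lambda\mu_{+,t}} \le \E[e^{\lambda X}\mathds{1}_{\mathcal{E}}]$ and $(1-p_t)e^{\lambda\mu_{-,t}} \le \E[e^{\lambda X}\mathds{1}_{\mathcal{E}^c}]$. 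Each is legitimate because $p_t \in (0,1)$ and $X$ is integrable. Since $\lambda$ was arbitrary in the window, $Y \in \SE(\sigma^2,\alpha)$.

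\emph{Stop-loss.} On $\mathcal{E}$ we have $X > t$, so $(X-t)_+ = X - t$. Hence
\begin{align*}
    \E[(X-t)_+] = \E[(X-t)\mathds{1}_{\mathcal{E}}] = p_t(\mu_{+,t} - t).
\end{align*}
For $Y$, the upper atom satisfies $\mu_{+,t} > t$ and the lower atom satisfies $\mu_{-,t} \le t$, as noted before \eqref{eqn:Y}. The lower atom therefore contributes nothing, and $S_Y(t) = p_t(\mu_{+,t} - t) = S_X(t)$.

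There is no real obstacle here. The only care needed is to check that the conditional expectations are finite, which Lemma~\ref{lem:center} provides, and to confirm the placement of the atoms relative to $t$. That placement holds because $X > t$ on $\mathcal{E}$ and $X \le t$ on $\mathcal{E}^c$.
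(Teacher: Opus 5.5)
Your proposal is correct and follows the paper's proof essentially step for step. Membership comes from conditional Jensen applied to $x\mapsto e^{\lambda x}$ at each frequency of the window. The stop-loss identity comes from the hinge vanishing on $\mathcal{E}^c$ together with $\mu_{-,t}\le t<\mu_{+,t}$. The paper also records $\E[Y]=0$ via the tower property; you can omit this, since centering follows from membership by Lemma~\ref{lem:center}.
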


\begin{proof}
\emph{Centering.} By the tower property, $\E[Y] = \E[\E[X \mid \sigma(\mathcal{E})]] = \E[X] = 0$, the last equality by Lemma~\ref{lem:center}, which also supplies the integrability that makes the conditional expectation well defined.

\smallskip
\emph{Membership.} For any $\lambda \in [-1/\alpha, 1/\alpha]$,
\begin{align*}
    M_Y(\lambda) = p_t\,e^{\lambda\mu_{+,t}} + (1-p_t)\,e^{\lambda\mu_{-,t}}
    = \E\left[e^{\lambda\,\E[X \mid \sigma(\mathcal{E})]}\right]
    \le \E\left[\E\left[e^{\lambda X} \mid \sigma(\mathcal{E})\right]\right]
    = M_X(\lambda) \le e^{\lambda^2\sigma^2/2},
\end{align*}
where the first inequality is the conditional Jensen inequality applied to the convex map $x \mapsto e^{\lambda x}$, and the last is \eqref{eqn:subexp} for $X$, the one place in this proof where the defining bound is used. Therefore $Y \in \SE(\sigma^2,\alpha)$.

\smallskip
\emph{Preservation of the stop-loss value.} Finally,
\begin{align*}
    S_X(t) = \E[(X-t)_+] = p_t\,\E[X - t \mid \mathcal{E}] = p_t(\mu_{+,t} - t) = \E[(Y - t)_+] = S_Y(t),
\end{align*}
the second equality because $(X - t)_+$ vanishes on $\mathcal{E}^c$ and equals $X - t$ on $\mathcal{E}$, and the fourth because $\mu_{-,t} \le t < \mu_{+,t}$.
\end{proof}

We make the change of variable $\nu_t(1 - p_t) \coloneqq \mu_{+,t}$. Since $\E[Y] = p_t\mu_{+,t} + (1-p_t)\mu_{-,t} = 0$, we get $\mu_{-,t} = -p_t\nu_t$, and $\mu_{+,t} > t \ge 0$ gives $\nu_t > 0$, hence $\mu_{-,t} < 0$. The atoms and the threshold are therefore ordered as
\begin{align}
    -p_t\nu_t = \mu_{-,t} < 0 \le t < \mu_{+,t} = (1 - p_t)\nu_t,
    \label{eqn:order}
\end{align}
the law of $Y$ is $p_t\,\delta_{(1-p_t)\nu_t} + (1-p_t)\,\delta_{-p_t\nu_t}$, and
\begin{align}
    M_Y(\lambda) = p_t\,e^{\lambda(1-p_t)\nu_t} + (1 - p_t)\,e^{-\lambda p_t\nu_t}.
    \label{eqn:MY}
\end{align}

\subsection*{Control of the spread $\nu$}

Only the pair $(p_t, \nu_t)$ carries the threshold into the projected member. From here on the atom weight $p \in (0,1)$ varies freely and we drop the index; the projected member is recovered at $p = p_t$.

Membership now survives only as the bound \eqref{eqn:subexp} on the window, which caps the spread. Testing it at the point $\lambda_* = 2\beta_p/\nu$ of \citet{zhang2026sharpmgf} gives the first branch of \eqref{eqn:nu}. When that point lies outside the window the test is unavailable, and its position relative to the window gives the second branch instead.

\begin{lem}
\label{lem:nu}
Let $p \in (0,1)$ and $\nu > 0$, and suppose that the two-point variable $Y$ with law $p\,\delta_{(1-p)\nu} + (1-p)\,\delta_{-p\nu}$ belongs to $\SE(\sigma^2,\alpha)$. Then
\begin{align}
    \nu \le \sigma\sqrt{\frac{2\beta_p}{1 - 2p}}\ \vee\ 2\alpha\,|\beta_p|,
    \label{eqn:nu}
\end{align}
where at $p = 1/2$ the first entry is interpreted as its limit $2\sigma$.
\end{lem}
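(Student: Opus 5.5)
We test the membership bound \eqref{eqn:subexp} at a single well-chosen frequency, and split according to whether the Kearns--Saul binding frequency $\lambda_* \coloneqq 2\beta_p/\nu$ lies in the window $|\lambda|\le 1/\alpha$.

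\emph{Reduction to $p\le 1/2$.} If $p>1/2$, replace $Y$ by $-Y$. Its law is $(1-p)\,\delta_{p\nu}+p\,\delta_{-(1-p)\nu}$, which is the same family with weight $1-p$. The class is closed under sign flip, $|\beta_{1-p}|=|\beta_p|$, and $\beta_p/(1-2p)$ is invariant under $p\mapsto 1-p$. So we may assume $p\in(0,1/2]$, where $\beta_p\ge0$.

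\emph{Case $\lambda_*\le 1/\alpha$, i.e. $\nu\ge 2\alpha\beta_p$.} Evaluate \eqref{eqn:MY} at $\lambda_*$. With $s=\lambda_*\nu=2\beta_p$ we have $e^{s}=((1-p)/p)^2$. Hence
\[
M_Y(\lambda_*) = e^{-ps}\bigl(p\,e^{s}+1-p\bigr) = e^{-ps}\,\frac{1-p}{p},
\qquad
\log M_Y(\lambda_*) = \beta_p(1-2p).
\]
The bound \eqref{eqn:subexp} then gives $\beta_p(1-2p)\le \sigma^2\cdot 4\beta_p^2/(2\nu^2)$. For $p<1/2$ this rearranges to $\nu^2\le 2\sigma^2\beta_p/(1-2p)$, which is the first entry of \eqref{eqn:nu}.

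\emph{Case $\lambda_*>1/\alpha$.} Here $\nu<2\alpha\beta_p$, which is the second entry, and nothing more needs to be proved. So in either case $\nu$ is at most the maximum of the two entries.

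\emph{The point $p=1/2$.} This is the only delicate step. Here $\beta_p=0$, so $\lambda_*=0$ and the test above is vacuous. The second entry is $0$, so we must show directly that $\nu\le 2\sigma$. The variable $Y$ is $\pm\nu/2$ with probability $1/2$ each, so $M_Y(\lambda)=\cosh(\lambda\nu/2)$. Expanding near $\lambda=0$, which is always inside the window, gives $1+\lambda^2\nu^2/8+O(\lambda^4)\le 1+\sigma^2\lambda^2/2+O(\lambda^4)$. Letting $\lambda\to0$ yields the variance comparison $\nu^2/4\le\sigma^2$, that is $\nu\le 2\sigma$, which is the stated limit of the first entry.
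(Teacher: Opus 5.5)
Your proof is correct and follows the paper's argument. You test \eqref{eqn:subexp} at the Kearns--Saul point $\lambda_* = 2\beta_p/\nu$, split on whether that point lies in the window, and handle $p=1/2$ by a second-order expansion at $\lambda=0$, exactly as the paper does. The differences are cosmetic: you reduce $p>1/2$ to $p<1/2$ by the sign flip, where the paper uses that $\beta_p$ and $1-2p$ share their sign, and at $p=1/2$ you expand $\cosh(\lambda\nu/2)$ explicitly, where the paper argues through $h''(0)\le 0$.
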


\begin{proof}
\emph{The test point, for $p \ne 1/2$.} Let $p \ne 1/2$, so $\beta_p \ne 0$, and consider the test point $\lambda_* \coloneqq 2\beta_p/\nu$. From \eqref{eqn:MY},
\begin{align}
    M_Y\left(\frac{2\beta_p}{\nu}\right)
    &= p\,e^{2\beta_p(1-p)} + (1-p)\,e^{-2\beta_p p}
    = p\left(\frac{1-p}{p}\right)^{2(1-p)} + (1-p)\left(\frac{1-p}{p}\right)^{-2p} \notag \\
    &= \left(p\left(\frac{1-p}{p}\right)^2 + (1-p)\right)\left(\frac{1-p}{p}\right)^{-2p}
    = \left(\frac{1-p}{p} + 1\right)(1-p)\left(\frac{1-p}{p}\right)^{-2p} \notag \\
    &= \left(\frac{1-p}{p}\right)^{1-2p}
    = e^{(1-2p)\beta_p}.
    \label{eqn:MYtest}
\end{align}
Two cases arise, according to the position of $\lambda_*$ relative to the window.

\smallskip
\emph{The interior case $|\lambda_*| \le 1/\alpha$.} The bound \eqref{eqn:subexp} applies at $\lambda_*$ and, combined with \eqref{eqn:MYtest},
\begin{align*}
    e^{(1-2p)\beta_p} \le \exp\left(\frac{\lambda_*^2\,\sigma^2}{2}\right) = \exp\left(\frac{2\sigma^2\beta_p^2}{\nu^2}\right),
    \qquad\text{that is}\qquad
    (1-2p)\beta_p \le \frac{2\sigma^2\beta_p^2}{\nu^2}.
\end{align*}
Since $p < 1/2$ holds if and only if $(1-p)/p > 1$, that is $\beta_p > 0$, the numbers $\beta_p$ and $1-2p$ share their sign, so $(1-2p)\beta_p > 0$, and dividing by it,
\begin{align*}
    \nu^2 \le \frac{2\sigma^2\beta_p}{1 - 2p}.
\end{align*}
\emph{The infeasible case $|\lambda_*| > 1/\alpha$.} Directly,
\begin{align*}
    \frac{2|\beta_p|}{\nu} = |\lambda_*| > \frac{1}{\alpha},
    \qquad \text{that is} \qquad
    \nu < 2\alpha\,|\beta_p|.
\end{align*}
In either case \eqref{eqn:nu} holds.

\smallskip
\emph{The case $p = 1/2$: a Taylor argument.} Here $\beta_p = 0$, the test point collapses to $\lambda_* = 0$ and carries no information, and the variance takes its place. Define, for $\lambda \in [-1/\alpha, 1/\alpha]$,
\begin{align*}
    h(\lambda) \coloneqq M_Y(\lambda) - e^{\lambda^2\sigma^2/2},
\end{align*}
which is smooth on the window, $Y$ being bounded. By \eqref{eqn:subexp} we have $h(\lambda) \le 0$ on the window, while $h(0) = 0$, so $h$ attains its maximum at the interior point $\lambda = 0$. Differentiating under the expectation,
\begin{align*}
&&
    h'(\lambda) &= \E\left[Y e^{\lambda Y}\right] - \sigma^2\lambda\,e^{\lambda^2\sigma^2/2}, &
    h''(\lambda) &= \E\left[Y^2 e^{\lambda Y}\right] - \left(\sigma^2 + \sigma^4\lambda^2\right)e^{\lambda^2\sigma^2/2}, &
&\end{align*}
so that, using $\E[Y] = 0$,
\begin{align*}
&&
    h'(0) &= \E[Y] = 0, &
    h''(0) &= \E[Y^2] - \sigma^2. &
&\end{align*}
If $h''(0) > 0$, the Taylor expansion $h(\lambda) = (\lambda^2/2)\,h''(0) + o(\lambda^2)$ would make $h$ positive near $0$, contradicting $h \le 0$. Hence $h''(0) \le 0$, that is,
\begin{align*}
    \E[Y^2] \le \sigma^2.
\end{align*}
On the other hand, at $p = 1/2$,
\begin{align*}
    \E[Y^2] = p\left((1-p)\nu\right)^2 + (1-p)\left(-p\nu\right)^2 = p(1-p)\nu^2 = \frac{\nu^2}{4},
\end{align*}
whence $\nu \le 2\sigma$. This matches the first entry of \eqref{eqn:nu}: by l'H\^opital's rule,
\begin{align}
    \lim_{p \to 1/2} \frac{\beta_p}{1 - 2p}
    = \lim_{p \to 1/2} \frac{\frac{\diff}{\diff p}\log\frac{1-p}{p}}{\frac{\diff}{\diff p}\left(1 - 2p\right)}
    = \lim_{p \to 1/2} \frac{-\frac{1}{p(1-p)}}{-2}
    = 2,
    \label{eqn:lhopital}
\end{align}
so the first entry of \eqref{eqn:nu} tends to $\sigma\sqrt{2 \cdot 2} = 2\sigma$ as $p \to 1/2$.
\end{proof}

\subsection*{The Laplace stop-loss transform and its tangent}

On the benchmark side everything is explicit. A Laplace multiple has a closed-form stop-loss transform, and so do its tangent lines; against one of those lines, selected by the atom weight of the two-point member, every later bound is measured.

\begin{lem}
\label{lem:stoploss}
For any $b > 0$ and $t \ge 0$,
\begin{align}
&&
    S_{b\Lap}(t) &= \frac{b}{2}\,e^{-t/b}, &
    S_{b\Lap}'(t) &= -\frac{1}{2}\,e^{-t/b} = -\Prob(b\Lap > t). &
& \label{eqn:explicit}
\end{align}
Moreover $S_{b\Lap}$ is convex on $\R$, and $S_{b\Lap}(t) \ge 0 \vee (-t)$ for all $t \in \R$.
\end{lem}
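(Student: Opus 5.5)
The plan is a direct computation from the Laplace density, using the layer-cake form of \eqref{eqn:layercake}. Scaling gives $\Prob(b\Lap>s)=\Prob(\Lap>s/b)$. For $s\ge 0$, integrating the density $\tfrac12 e^{-|x|}$ over $(s/b,\infty)$ yields $\Prob(\Lap>s/b)=\tfrac12 e^{-s/b}$.

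Since $b\Lap$ is integrable, \eqref{eqn:layercake} applies. For $t\ge0$ it gives
\[
S_{b\Lap}(t)=\int_t^\infty \tfrac12 e^{-s/b}\,\diff s=\tfrac b2 e^{-t/b}.
\]
Differentiating, either directly or via the fundamental theorem of calculus applied to the layer-cake integral, gives $S_{b\Lap}'(t)=-\tfrac12 e^{-t/b}=-\Prob(b\Lap>t)$. This establishes \eqref{eqn:explicit}.

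Convexity on all of $\R$ holds because $S_{b\Lap}(t)=\E[(b\Lap-t)_+]$ is an expectation of the functions $t\mapsto(x-t)_+$, each of which is convex in $t$. Alternatively, the layer-cake formula gives $S'(t)=-\Prob(b\Lap>t)$ for every $t$, and this derivative is nondecreasing.

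For the lower bound, note first that $S_{b\Lap}(t)\ge0$ trivially. Next, $(x-t)_+\ge x-t$ pointwise, so $S_{b\Lap}(t)\ge\E[b\Lap]-t=-t$, using $\E[\Lap]=0$ from Definition~\ref{dfn:laplace}. Together these give $S_{b\Lap}(t)\ge 0\vee(-t)$.

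There is no real obstacle here. The only care needed is to restrict the closed form to $t\ge0$, where the tail formula $\tfrac12e^{-s/b}$ is valid, and to obtain convexity and the lower bound on all of $\R$ by the general arguments above rather than from the closed form.
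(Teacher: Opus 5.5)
Your proof is correct and follows the paper's argument; the convexity and lower-bound steps are identical. The one cosmetic difference is that you get $S_{b\Lap}(t)$ by integrating the tail $\tfrac12 e^{-s/b}$ in the layer-cake formula, whereas the paper integrates $(x-s)$ against the density and then uses positive homogeneity. At $t=0$, rely on your FTC route: the tail $s\mapsto\Prob(b\Lap>s)$ is continuous because $b\Lap$ has no atoms, as the paper notes, whereas differentiating the closed form on $t\ge0$ alone gives only a one-sided derivative there.
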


\begin{proof}
For $s \ge 0$, substituting $y = x - s$,
\begin{align*}
    \E[(\Lap - s)_+] = \int_s^\infty (x - s)\,\frac{e^{-x}}{2}\,\diff x
    = \frac{e^{-s}}{2}\int_0^\infty y\,e^{-y}\,\diff y
    = \frac{e^{-s}}{2},
\end{align*}
and, by positive homogeneity of $z \mapsto (z)_+$,
\begin{align*}
    S_{b\Lap}(t) = b\,\E\left[\left(\Lap - \frac{t}{b}\right)_+\right] = \frac{b}{2}\,e^{-t/b}.
\end{align*}
The law of $b\Lap$ has no atoms, so \eqref{eqn:layercake} gives $S_{b\Lap}'(t) = -\Prob(b\Lap > t) = -e^{-t/b}/2$ for $t \ge 0$. Convexity holds because $t \mapsto (x - t)_+$ is convex for each $x$; the trivial bounds follow from $(z)_+ \ge 0$ and
\begin{align*}
    S_{b\Lap}(t) \ge \E[b\Lap - t] = -t.
    &\qedhere
\end{align*}
\end{proof}

Fix $b > 0$ and $p \in (0, 1/2]$, and set $t_p \coloneqq b\log(1/(2p)) \ge 0$. By \eqref{eqn:explicit},
\begin{align*}
&&
    S_{b\Lap}(t_p) &= \frac{b}{2}\,e^{-\log(1/(2p))} = bp, &
    S_{b\Lap}'(t_p) &= -p, &
&\end{align*}
so the tangent line of $S_{b\Lap}$ at $t = t_p$ is
\begin{align*}
    S_{b\Lap}'(t_p)\,(t - t_p) + S_{b\Lap}(t_p)
    = bp - p(t - t_p)
    = bp\left(1 + \log\frac{1}{2p}\right) - pt
    = b\,\Lambda(p) - pt,
\end{align*}
with $\Lambda$ as in \eqref{eqn:Lambda}. By convexity the graph of $S_{b\Lap}$ lies above this tangent line:
\begin{align}
    S_{b\Lap}(t) \ge b\,\Lambda(p) - pt
    \qquad \text{for all } t \in \R,\ p \in (0, 1/2].
    \label{eqn:lb}
\end{align}

\subsection*{Four elementary inequalities}

Each branch of \eqref{eqn:nu} is dominated, after multiplication by $1-p$, by the tangent factor $1 + \log(1/(2p))$ (Lemmas~\ref{lem:A} and~\ref{lem:B}, for $p \le 1/2$), and, after multiplication by $p(1-p)$, by an absolute constant (Lemmas~\ref{lem:C} and~\ref{lem:D}, for all $p$). These bounds hold the place occupied in \citet{zhang2026sharpmgf} by the isoperimetric lower bound. Lemmas~\ref{lem:A}, \ref{lem:C} and~\ref{lem:D} are proved in Appendix~\ref{app:series}.

\begin{lem}
\label{lem:A}
For all $p \in (0, 1/2]$,
\begin{align*}
    (1 - p)\sqrt{\frac{2\beta_p}{1 - 2p}} \le 1 + \log\frac{1}{2p},
\end{align*}
with equality if and only if $p = 1/2$.
\end{lem}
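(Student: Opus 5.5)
Write $\ell \coloneqq \log(1/(2p)) \ge 0$ as the variable, or equivalently parametrize by $p\in(0,1/2]$. Both sides are positive, so we square the inequality. Since $1-2p>0$ for $p<1/2$, this reduces Lemma~\ref{lem:A} to
\begin{align*}
    \Phi(p) \coloneqq (1-2p)\left(1+\log\frac{1}{2p}\right)^2 - 2(1-p)^2\beta_p \;\ge\; 0,
    \qquad p\in(0,1/2],
\end{align*}
with equality only at $p=1/2$. At $p=1/2$ both terms vanish, since $\beta_{1/2}=0$. So $\Phi(1/2)=0$, and it suffices to show that $\Phi$ is strictly decreasing on $(0,1/2)$, or at least that $\Phi>0$ there.

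The natural route is a series expansion around $p=1/2$, consistent with the paper's remark that the proof sits in the ``series'' appendix. Put $p=(1-u)/2$ with $u\in(0,1)$. Then $1-2p=u$, $\log(1/(2p))=-\log(1-u)=\sum_{k\ge1}u^k/k$, $1-p=(1+u)/2$, and
\begin{align*}
    \beta_p=\log\frac{1+u}{1-u}=2\artanh u=2\sum_{j\ge0}\frac{u^{2j+1}}{2j+1}.
\end{align*}
The inequality becomes
\begin{align*}
    u\Bigl(1-\log(1-u)\Bigr)^2 \ge (1+u)^2\artanh(u),
\end{align*}
or, after dividing by $u$, $\bigl(1+L\bigr)^2\ge (1+u)^2\,\artanh(u)/u$ with $L=-\log(1-u)$. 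We expand both sides in powers of $u$. The left side is $1+2u+2u^2+\tfrac{5}{3}u^3+\dots$ (since $L=u+u^2/2+u^3/3+\dots$ and $L^2=u^2+u^3+\dots$). The right side is $(1+2u+u^2)(1+u^2/3+u^4/5+\dots)=1+2u+\tfrac43u^2+\tfrac23u^3+\dots$. The first two coefficients agree, and the $u^2$ coefficient gap is $2/3>0$. The plan is to show that every coefficient of the difference is nonnegative, which gives strict positivity on $(0,1)$ and equality only at $u=0$, i.e.\ $p=1/2$.

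The coefficients of $(1+L)^2$ are explicit: $[u^n](1+L)^2 = \tfrac{2}{n}+\sum_{k=1}^{n-1}\tfrac{1}{k(n-k)} = \tfrac{2}{n}+\tfrac{2H_{n-1}}{n}$. The right side's coefficient is bounded by the sum of the coefficients of $(1+u)^2$ against those of $\artanh(u)/u$, each at most $1$, which gives at most $4$. That crude bound is too weak, so we compute exactly. For even $n=2m$ it is $\tfrac{1}{2m+1}+\tfrac{2}{2m-1}\cdot 0+\tfrac{1}{2m-1}$, and for odd $n$ it is $\tfrac{2}{n}$-type. Precisely: $[u^n]\,(1+u)^2\artanh(u)/u$ equals $c_n+c_{n-2}$ for even $n$, and $2c_{n-1}$ for odd $n$, where $c_{2j}=1/(2j+1)$. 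Hence it is $O(1/n)$ with leading constant at most $2$. The left side's coefficient is $2(1+H_{n-1})/n$, which beats $2/n$ because $H_{n-1}>0$. The even case, $\tfrac{1}{n+1}+\tfrac{1}{n-1}\le \tfrac{2(1+H_{n-1})}{n}$, follows from $\tfrac{1}{n+1}+\tfrac{1}{n-1}=\tfrac{2n}{n^2-1}\le \tfrac{2}{n}\cdot\tfrac{n^2}{n^2-1}$ together with $H_{n-1}\ge 1/(n^2-1)$ for $n\ge2$.

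The main obstacle is bookkeeping this coefficientwise comparison cleanly, including the small indices $n\le 2$ where equality occurs and must be checked by hand. If the coefficient argument stalls, the fallback is to differentiate $\Phi$ (in the $u$ variable) twice. Then $\Phi(0)=\Phi'(0)=0$, and positivity of $\Phi''$ on $(0,1)$ is shown by a similar but simpler series argument.
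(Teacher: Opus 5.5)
Your proposal is correct and follows essentially the paper's proof: the same substitution $1-2p=u$, the same coefficientwise comparison of $(1+u)^2\artanh(u)/u$ against $(1-\log(1-u))^2$ via $[u^n](1+L)^2 = 2(1+H_{n-1})/n$, the same even-index reduction to $H_{n-1}\ge 1/(n^2-1)$, and strictness from the coefficient gap at $n=2$. One bookkeeping correction: the coefficients coincide only at $n=0,1$, while $n=2$ is already strict ($4/3<2$), and that gap is exactly what makes the inequality strict for every $u>0$.
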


\begin{lem}
\label{lem:B}
For all $p \in (0, 1/2]$,
\begin{align*}
    (1 - p)\,\beta_p \le 1 + \log\frac{1}{2p}.
\end{align*}
\end{lem}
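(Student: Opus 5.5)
The plan is to move everything to one side and recognize a binary entropy. Define, for $p\in(0,1/2]$,
\begin{align*}
    F(p) \coloneqq 1 + \log\frac{1}{2p} - (1-p)\,\beta_p ,
\end{align*}
so the claim is exactly $F(p)\ge 0$. Recall $\beta_p = \log(1-p) - \log p$.

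\emph{Expanding $F$.} Write $\log\frac{1}{2p} = -\log 2 - \log p$ and expand the product:
\begin{align*}
    (1-p)\,\beta_p = (1-p)\log(1-p) - (1-p)\log p .
\end{align*}
In $F$, the two terms in $\log p$ are $-\log p$ and $+(1-p)\log p$. They combine to $-p\log p$. Hence
\begin{align*}
    F(p) = (1-\log 2) \;-\; p\log p \;-\; (1-p)\log(1-p) = d_0 + H(p),
\end{align*}
where $H(p) \coloneqq -p\log p-(1-p)\log(1-p)$ is the binary entropy in nats and $d_0 = 1-\log 2$ is the constant of Theorem~\ref{thm:sgbounds}.

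\emph{Conclusion.} For $p\in(0,1)$, both $p$ and $1-p$ lie in $(0,1)$, so both $-p\log p$ and $-(1-p)\log(1-p)$ are strictly positive. Thus $H(p)>0$ on $(0,1/2]$. Since also $d_0 = 1-\log 2 > 0$, we get
\begin{align*}
    F(p) \ \ge\ d_0 + H(p) \ >\ 0 \qquad \text{for all } p\in(0,1/2].
\end{align*}
This gives the inequality strictly, with slack at least $d_0 + H(p)$; no equality case arises, unlike Lemma~\ref{lem:A}.

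There is no real obstacle in this argument. The only thing to get right is the bookkeeping of the $\log p$ terms, which is what makes the $p\log p$ entropy term appear.
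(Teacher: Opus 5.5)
Your proof is correct and takes a genuinely different route from the paper. Your identity
\[
1+\log\frac{1}{2p}-(1-p)\beta_p \;=\; d_0 \;-\; p\log p \;-\; (1-p)\log(1-p)
\]
holds for every $p\in(0,1)$, and both summands on the right are positive. (You derived an equality and then wrote $F(p)\ge d_0+H(p)$; that is harmless.)

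The paper does not compute the difference. It chains three crude bounds:
\[
(1-p)\beta_p \le \beta_p \le \log\frac1p = \log\frac{1}{2p}+\log 2 \le 1+\log\frac{1}{2p}.
\]
These use $\beta_p\ge 0$ (the only place where $p\le 1/2$ enters), $\log(1-p)\le 0$, and $\log 2\le 1$.

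The two arguments are closely linked. The losses in the paper's three steps are $p\beta_p$, $-\log(1-p)$ and $d_0$, and they sum exactly to your slack $d_0-p\log p-(1-p)\log(1-p)$.

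The paper's chain is quicker to check line by line. Your exact identity gives more:
\begin{itemize}
\item strict inequality at every $p$;
\item validity on all of $(0,1)$ without the restriction $p\le 1/2$;
\item the location of the tightness: the slack decreases to its infimum $d_0$ as $p\downarrow 0$.
\end{itemize}
The last point shows that the constant $1$ in the lemma could be lowered to $\log 2$, and to nothing smaller. Equivalently, the intermediate bound $(1-p)\beta_p\le\log(1/p)$ in the paper's chain is additively sharp as $p\downarrow 0$.
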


\begin{proof}
Using $\log 2 \le 1$ in the last step,
\begin{align*}
    (1-p)\beta_p \le \beta_p = \log\frac{1-p}{p} \le \log\frac{1}{p} = \log\frac{1}{2p} + \log 2 \le 1 + \log\frac{1}{2p}.
    &\qedhere
\end{align*}
\end{proof}

\begin{lem}
\label{lem:C}
For all $p \in (0,1)$,
\begin{align*}
    p(1-p)\sqrt{\frac{2\beta_p}{1 - 2p}} \le \frac{1}{2},
\end{align*}
with equality if and only if $p = 1/2$.
\end{lem}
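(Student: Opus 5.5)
The plan is to reduce the inequality to a comparison of two power series in the single variable $u \coloneqq 1-2p$, after using a symmetry to restrict to $p<1/2$. The point $p=1/2$ will be handled separately through the limit \eqref{eqn:lhopital}.

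\emph{Step 1: the point $p=1/2$ and the symmetry.} At $p = 1/2$ the square root is read as its limit $2$ by \eqref{eqn:lhopital}, so the left side equals $\tfrac14\cdot 2 = \tfrac12$ and equality holds. Next, under $p \mapsto 1-p$ both $\beta_p$ and $1-2p$ change sign, so the ratio $\beta_p/(1-2p)$ is unchanged, and so is $p(1-p)$. Hence it suffices to prove the strict inequality for $p \in (0,1/2)$.

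\emph{Step 2: change of variable.} For $p\in(0,1/2)$ set $u \coloneqq 1-2p \in (0,1)$. Then
\begin{align*}
p(1-p) = \frac{1-u^2}{4}, \qquad \beta_p = \log\frac{1+u}{1-u} = 2\artanh u .
\end{align*}
Both sides of the claim are positive, so we may square. The claim becomes $\frac{(1-u^2)^2}{16}\cdot\frac{4\artanh u}{u} < \frac14$, that is,
\begin{align*}
\frac{\artanh u}{u} < \frac{1}{(1-u^2)^2}, \qquad u\in(0,1).
\end{align*}

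\emph{Step 3: coefficientwise comparison.} On $(0,1)$ we have the expansions
\begin{align*}
\frac{\artanh u}{u} = \sum_{k\ge0}\frac{u^{2k}}{2k+1}, \qquad \frac{1}{(1-u^2)^2} = \sum_{k\ge0}(k+1)\,u^{2k}.
\end{align*}
The coefficients satisfy $\frac{1}{2k+1} \le k+1$, with equality only at $k=0$. Since $u>0$, the terms with $k\ge1$ are strictly positive, so the difference of the two series is strictly positive. This gives the strict inequality for $p\ne 1/2$, and together with Step 1 it proves the equality case.

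The only delicate point is the bookkeeping at $p=1/2$ and the sign handling for $p>1/2$; both are settled by the symmetry argument and \eqref{eqn:lhopital}. The series comparison itself is immediate.
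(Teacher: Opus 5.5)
Your proof is correct and follows essentially the paper's route. The substitution $x=|1-2p|$ (your $u$, with the reflection $p\mapsto 1-p$ made explicit rather than absorbed into the absolute value) reduces the claim to $(1-x^2)^2\artanh x/x<1$ for $x\in(0,1)$, which is then settled by a termwise Maclaurin comparison. The only cosmetic difference is that the paper bounds $\artanh x/x$ by the geometric series $1/(1-x^2)$ and gets strictness from $1-x^2<1$, whereas you compare directly with $\sum_{k\ge0}(k+1)u^{2k}$ and get strictness from the coefficient gap at $k\ge1$.
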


\begin{lem}
\label{lem:D}
For all $p \in (0,1)$,
\begin{align*}
    p(1-p)\,|\beta_p| \le \frac{\sqrt{2}}{6}.
\end{align*}
\end{lem}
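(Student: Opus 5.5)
The plan is to reduce to a one-variable bound through the artanh substitution and then control the resulting function by its power series. Both factors $p(1-p)$ and $|\beta_p|$ are invariant under $p \mapsto 1-p$, since $\beta_{1-p} = -\beta_p$. So it suffices to treat $p \in (0,1/2]$, where $\beta_p \ge 0$. At $p = 1/2$ the left-hand side vanishes, so we may assume $p < 1/2$.

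Set $u \coloneqq 1 - 2p \in (0,1)$. Then
\[
p(1-p) = \frac{1-u^2}{4}
\qquad\text{and}\qquad
\beta_p = \log\frac{1+u}{1-u} = 2\artanh u .
\]
The claim therefore becomes
\[
\varphi(u) \coloneqq (1-u^2)\artanh u \le \frac{\sqrt2}{3},
\qquad u \in (0,1).
\]

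Next we expand in series. For $|u|<1$,
\[
\artanh u = \sum_{k\ge0} \frac{u^{2k+1}}{2k+1}.
\]
Multiplying by $1-u^2$ and collecting the coefficient of $u^{2k+1}$ for $k \ge 1$ gives
\[
\varphi(u) = u - \sum_{k\ge1} \left(\frac{1}{2k-1} - \frac{1}{2k+1}\right) u^{2k+1}
= u - \sum_{k\ge1} \frac{2\,u^{2k+1}}{(2k-1)(2k+1)} .
\]
Rearranging the terms is legitimate because the series converges absolutely on $(0,1)$. Every subtracted term is nonnegative for $u>0$. Keeping only the $k=1$ term therefore yields
\[
\varphi(u) \le \psi(u) \coloneqq u - \frac{2u^3}{3},
\]
with strict inequality on $(0,1)$.

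Finally we maximize the cubic. Since $\psi'(u) = 1 - 2u^2$, the function $\psi$ increases on $[0, 1/\sqrt2]$ and decreases afterwards. Its maximum on $[0,1]$ is
\[
\psi\bigl(1/\sqrt2\bigr) = \frac{1}{\sqrt2} - \frac{1}{3\sqrt2} = \frac{\sqrt2}{3}.
\]
Hence
\[
p(1-p)\,|\beta_p| = \frac{\varphi(u)}{2} \le \frac{\sqrt2}{6},
\]
and the inequality is in fact strict for $p \ne 1/2$.

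The only delicate point is the sign bookkeeping in the series step. Once the series is written as $u$ minus a sum of nonnegative terms, the rest is elementary calculus.
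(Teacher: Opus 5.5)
Your proof is correct and follows essentially the same route as the paper. Both arguments substitute $x=|1-2p|$ to reduce the claim to $(1-x^2)\artanh x \le \sqrt{2}/3$, expand in series and keep only the cubic correction, then maximize $x-\tfrac{2}{3}x^3$ at $x=1/\sqrt{2}$; the only cosmetic difference is that you first reduce to $p\le 1/2$ via $p\mapsto 1-p$, whereas the paper covers both halves at once through the absolute value in the substitution.
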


\subsection*{A first comparison bound}

The spread bound \eqref{eqn:nu}, the tangent bound \eqref{eqn:lb} and the four elementary inequalities assemble into a comparison with an explicit constant, admissible for all $\sigma, \alpha > 0$.

\begin{prop}
\label{prop:firstbound}
For all $\sigma, \alpha > 0$,
\begin{align*}
    \sigma \le c^{\SE}(\sigma, \alpha) \le \sigma \vee 2\alpha,
\end{align*}
and the constant $\sigma \vee 2\alpha$ is itself admissible: every $X \in \SE(\sigma^2,\alpha)$ satisfies $X \lecx (\sigma \vee 2\alpha)\,\Lap$.
\end{prop}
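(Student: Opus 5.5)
The proof has two parts: the lower bound $c^{\SE}(\sigma,\alpha)\ge\sigma$ from a single test member, and the admissibility of $b\coloneqq\sigma\vee2\alpha$ via the two-point reduction and the tangent bound.

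\emph{Lower bound.} Take $X\sim\mathrm{Unif}(\{-\sigma,\sigma\})$. Then $M_X(\lambda)=\cosh(\sigma\lambda)\le e^{\sigma^2\lambda^2/2}$ for every $\lambda$, so $X\in\SE(\sigma^2,\alpha)$. If $X\lecx c\Lap$, then testing with $f(x)=|x|$ gives $\sigma=\E|X|\le c\,\E|\Lap|=c$. Hence $c^{\SE}(\sigma,\alpha)\ge\sigma$.

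\emph{Admissibility of $b$.} By Proposition~\ref{prop:hinge} and Lemma~\ref{lem:center}, it suffices to show $S_X(t)\le S_{b\Lap}(t)$ for all $X$ in the class and all $t$. By Lemma~\ref{lem:half}, we may restrict to $t\ge0$. Fix such $t$. If $p_t=0$ there is nothing to prove. Otherwise Lemma~\ref{lem:Y} replaces $X$ by the two-point member $Y$ with weight $p\coloneqq p_t$ and spread $\nu\coloneqq\nu_t$. By \eqref{eqn:order}, $t<(1-p)\nu$ and $S_Y(t)=p\bigl((1-p)\nu-t\bigr)$. Lemma~\ref{lem:nu} bounds $\nu\le\sigma\sqrt{2\beta_p/(1-2p)}\vee2\alpha|\beta_p|$. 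We split on the size of $p$.

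\emph{Case $p\le1/2$.} Lemmas~\ref{lem:A} and~\ref{lem:B} give
\[
(1-p)\nu\le\sigma\Bigl(1+\log\tfrac1{2p}\Bigr)\vee2\alpha\Bigl(1+\log\tfrac1{2p}\Bigr)\le b\Bigl(1+\log\tfrac1{2p}\Bigr).
\]
Therefore $S_Y(t)\le bp(1+\log\frac1{2p})-pt=b\Lambda(p)-pt\le S_{b\Lap}(t)$, the last step by the tangent bound \eqref{eqn:lb}.

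\emph{Case $p>1/2$.} The tangent at $t_p$ is no longer available, since $\log(1/(2p))<0$. We use instead $S_Y(t)\le S_Y(0)=p(1-p)\nu$. Lemmas~\ref{lem:C} and~\ref{lem:D} bound this by $\frac{\sigma}{2}\vee2\alpha\frac{\sqrt2}{6}\le\frac b2$, because $\frac{2\sqrt2}{6}<\frac12$. That gives the comparison at $t=0$ only, so we need an argument for $t>0$. On $[0,(1-p)\nu]$, $S_Y$ is affine with slope $-p<-1/2$, and it vanishes beyond. The Laplace transform $S_{b\Lap}(t)=\frac b2e^{-t/b}$ has slope at least $-1/2$ for $t\ge0$. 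So $S_Y$ starts below $S_{b\Lap}$ at $t=0$ and decreases faster on the linear piece, after which it is zero. Hence $S_Y\le S_{b\Lap}$ on all of $[0,\infty)$.

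Combining the two cases gives $X\lecx b\Lap$, so $c^{\SE}(\sigma,\alpha)\le\sigma\vee2\alpha$.

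The step I expect to be the main obstacle is the case $p>1/2$: it needs the slope comparison above rather than the tangent bound, and the check that the constant $2\sqrt2/6$ from Lemma~\ref{lem:D} stays below $1/2$. The case $p\le1/2$ is a direct chaining of the lemmas.
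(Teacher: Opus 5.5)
Your proof is correct and follows the paper's argument essentially step by step. The pieces match one for one: the Rademacher lower bound; the reduction to $t\ge0$ and to the projected two-point member; Lemmas~\ref{lem:A}--\ref{lem:B} with the tangent bound \eqref{eqn:lb} for $p\le 1/2$; and Lemmas~\ref{lem:C}--\ref{lem:D} for $p>1/2$, where your slope comparison is exactly the paper's bound $S_{b\Lap}(t)=\frac b2e^{-t/b}\ge\frac b2-\frac t2\ge\frac b2-pt$. The only case you leave implicit is $p_t=1$, which cannot occur since $\E[X]=0$ and $t\ge 0$.
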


\begin{proof}
\emph{Admissibility of $\sigma \vee 2\alpha$.} Write $c \coloneqq \sigma \vee 2\alpha$ and let $X \in \SE(\sigma^2,\alpha)$. By Proposition~\ref{prop:hinge} and Lemma~\ref{lem:center} it suffices to prove $S_X(t) \le S_{c\Lap}(t)$ for all $t \in \R$, and by Lemma~\ref{lem:half} for all $t \ge 0$. Fix $t \ge 0$ and assume $p \coloneqq p_t \in (0,1)$, the cases $p_t \in \{0,1\}$ having been settled before \eqref{eqn:Y}, and let $Y$ be the projected member of Lemma~\ref{lem:Y}, with spread $\nu > 0$ as in \eqref{eqn:order}. Since $-p\nu \le 0 \le (1-p)\nu$ by \eqref{eqn:order}, the stop-loss transform of $Y$ is explicit:
\needspace{7\baselineskip}
\begin{itemize}
    \item for $t \le -p\nu$,
    \begin{align*}
        S_Y(t) = p\left((1-p)\nu - t\right) + (1-p)\left(-p\nu - t\right) = -t;
\end{align*}
    \item for $-p\nu \le t \le (1-p)\nu$, only the upper atom contributes, so $S_Y(t) = p\left((1-p)\nu - t\right)$;
    \item for $t \ge (1-p)\nu$, $S_Y(t) = 0$.
\end{itemize}
These cases assemble into
\begin{align}
    S_Y(t) = 0 \vee (-t) \vee p\left((1-p)\nu - t\right).
    \label{eqn:SY}
\end{align}
By Lemmas~\ref{lem:Y} and~\ref{lem:stoploss}, it therefore remains to prove
\begin{align}
    S_{c\Lap}(t) \ge p\left((1-p)\nu - t\right).
    \label{eqn:remains}
\end{align}

\smallskip
\emph{Case $p \in (0, 1/2]$.} By Lemma~\ref{lem:nu}, then Lemmas~\ref{lem:A} and~\ref{lem:B}, and finally $c = \sigma \vee 2\alpha$,
\begin{align}
    (1-p)\nu
    &\le \sigma\,(1-p)\sqrt{\frac{2\beta_p}{1 - 2p}}\ \vee\ 2\alpha\,(1-p)\beta_p \notag \\
    &\le \sigma\left(1 + \log\frac{1}{2p}\right) \vee 2\alpha\left(1 + \log\frac{1}{2p}\right)
    = c\left(1 + \log\frac{1}{2p}\right).
    \label{eqn:case1}
\end{align}
Multiplying \eqref{eqn:case1} by $p$ and using the tangent bound \eqref{eqn:lb} with $b = c$,
\begin{align*}
    S_{c\Lap}(t) \ge c\,\Lambda(p) - pt = c\,p\left(1 + \log\frac{1}{2p}\right) - pt \ge p(1-p)\nu - pt,
\end{align*}
which is \eqref{eqn:remains}.

\smallskip
\emph{Case $p \in [1/2, 1)$.} The tangent bound \eqref{eqn:lb} is available only for $p \le 1/2$, so this range takes a cruder route. Here we use the bound $e^{-x} \ge 1 - x$, convexity of $u \mapsto e^{-u}$ above its tangent at $0$, together with $t \ge 0$ and $p \ge 1/2$:
\begin{align}
    S_{c\Lap}(t) = \frac{c}{2}\,e^{-t/c} \ge \frac{c}{2} - \frac{t}{2} \ge \frac{c}{2} - pt.
    \label{eqn:case2}
\end{align}
By Lemma~\ref{lem:nu}, then Lemmas~\ref{lem:C} and~\ref{lem:D},
\begin{align}
    p(1-p)\nu
    \le \sigma\,p(1-p)\sqrt{\frac{2\beta_p}{1 - 2p}}\ \vee\ 2\alpha\,p(1-p)|\beta_p|
    \le \frac{\sigma}{2} \vee \frac{\sqrt2}{3}\,\alpha
    \le \frac{c}{2},
    \label{eqn:case2b}
\end{align}
using $\sigma \le c$ and $(\sqrt{2}/3)\,\alpha \le \alpha \le c/2$ in the last step. Combining \eqref{eqn:case2} and \eqref{eqn:case2b},
\begin{align*}
    S_{c\Lap}(t) \ge \frac{c}{2} - pt \ge p(1-p)\nu - pt,
\end{align*}
which is \eqref{eqn:remains} again.

Combining \eqref{eqn:remains} with the bound $S_{c\Lap}(t) \ge 0 \vee (-t)$ of Lemma~\ref{lem:stoploss}, and comparing with \eqref{eqn:SY}, we conclude $S_{c\Lap}(t) \ge S_Y(t) = S_X(t)$ for all $t \ge 0$, hence for all $t \in \R$ by Lemma~\ref{lem:half}. By Proposition~\ref{prop:hinge}, $X \lecx c\Lap$, proving $c^{\SE}(\sigma,\alpha) \le \sigma \vee 2\alpha$.

\smallskip
\emph{The lower bound $c^{\SE}(\sigma,\alpha) \ge \sigma$.} It is witnessed by a Rademacher member, as in \citet{zhang2026sharpmgf}. Let $X \sim \mathrm{Unif}(\{-\sigma,\sigma\})$, so that, by the termwise comparison
\begin{align*}
    \cosh(x) = \sum_{m \ge 0} \frac{x^{2m}}{(2m)!} \le \sum_{m \ge 0} \frac{x^{2m}}{2^m\,m!} = e^{x^2/2},
    \qquad \text{valid since } (2m)! \ge 2^m\,m!,
\end{align*}
one has $M_X(\lambda) = \cosh(\sigma\lambda) \le e^{\lambda^2\sigma^2/2}$ for all $\lambda \in \R$, hence $X \in \SE(\sigma^2,\alpha)$ for every $\alpha$. If $X \lecx c\Lap$, taking $f(x) = |x|$ gives
\begin{align}
    \sigma = \E[|X|] \le \E[|c\Lap|] = c\,\E[|\Lap|] = c.
    \label{eqn:emerge}
\end{align}
Hence $c^{\SE}(\sigma,\alpha) \ge \sigma = \sigma/\E[|\Lap|]$.
\end{proof}

\begin{remark}[Laplace members do not determine the constant]
The lower bound cannot be improved by Laplace members. The largest Laplace multiple contained in the class is $\alpha r\,\Lap$ with $r \coloneqq \sqrt{1 - e^{-\sigma^2/(2\alpha^2)}} \in (0,1)$. Indeed $M_{b\Lap}(\lambda) = \infty$ whenever $b|\lambda| \ge 1$, so membership forces $b < \alpha$; for such $b$, $b\Lap \in \SE(\sigma^2,\alpha)$ if and only if $-\log(1 - b^2\lambda^2) \le \lambda^2\sigma^2/2$ for $|\lambda| \le 1/\alpha$. Since $x \mapsto -\log(1-x)/x$ increases on $(0,1)$, the constraint binds at $\lambda = \pm 1/\alpha$, where it reads
\begin{align*}
    -\log\left(1 - \frac{b^2}{\alpha^2}\right) \le \frac{\sigma^2}{2\alpha^2},
    \qquad \text{that is} \qquad
    b \le \alpha\sqrt{1 - e^{-\sigma^2/(2\alpha^2)}} = \alpha r.
\end{align*}
Since $1 - e^{-w} \le w$ gives $\alpha r \le \sigma/\sqrt{2} < \sigma$, this member forces nothing beyond $c^{\SE}(\sigma,\alpha) \ge \alpha r$, which is weaker than the Rademacher bound \eqref{eqn:emerge}. Laplace members therefore cannot determine $c^{\SE}(\sigma,\alpha)$ in the regime $2\alpha > \sigma$. By Theorem~\ref{thm:exact} the constant equals $\sigma \vee \alpha$, forced for $\sigma/2 < \alpha \le \sigma$ by the symmetric member of \eqref{eqn:emerge} and, for $\alpha > \sigma$, approached along a sequence of asymmetric two-point laws of vanishing atom weight, each saturating the window constraint at exactly one of the endpoints $\lambda = \pm 1/\alpha$ (Corollary~\ref{cor:numax}). No centered asymmetric two-point law saturates both, its moment generating function taking distinct values at $\lambda$ and $-\lambda$ (Lemma~\ref{lem:sign}).
\end{remark}

Proposition~\ref{prop:firstbound} loses a factor $2$ on the endpoint branch of Lemma~\ref{lem:nu}. That branch overshoots the true maximal spread by a factor tending to $2$ as the atom weight $p$ tends to $0$ at fixed $\alpha$, the regime that forces $c^{\SE}(\sigma,\alpha) \ge \alpha$; at fixed $p$ the overshoot factor instead diverges as $\alpha \to \infty$, the true maximal spread staying bounded there. The sub-Gaussian branch of Lemma~\ref{lem:nu} is already exact (Lemma~\ref{lem:psi}).

From here on we normalize $\sigma = 1$, through \eqref{eqn:normalize}, and write $a = \alpha/\sigma$: Theorem~\ref{thm:exact} amounts to $c^{\SE}(1,a) = 1 \vee a$. For $p \in (0,1)$ and $\nu > 0$, let $Y_{p,\nu} \sim p\,\delta_{(1-p)\nu} + (1-p)\,\delta_{-p\nu}$, a centered two-point law with moment generating function
\begin{align}
    M_{Y_{p,\nu}}(\lambda) = p\,e^{\lambda(1-p)\nu} + (1 - p)\,e^{-\lambda p\nu},
    \label{eqn:Ypnu}
\end{align}
the family that contains every projected member of Lemma~\ref{lem:Y} through the change of variable \eqref{eqn:order}; the \emph{maximal spread} within the normalized class is
\begin{align}
    \nu_{\max}(p, a) \coloneqq \sup\left\{\nu > 0 :\ Y_{p,\nu} \in \SE(1, a)\right\}.
    \label{eqn:numax}
\end{align}
The constant $c^{\SE}(1,a)$ is the supremum over $p$ of an explicit functional of $\nu_{\max}(p,a)$ (Proposition~\ref{prop:var}), and $\nu_{\max}(p,a)$ has a two-branch closed form according to whether the Kearns--Saul point lies inside the window (Corollary~\ref{cor:numax}). Lemma~\ref{lem:A} bounds the sub-Gaussian branch and Proposition~\ref{prop:Rbound} the endpoint branch. At fixed $a$, the asymptotics of Lemma~\ref{lem:limR} as $p \downarrow 0$ force $c^{\SE}(1,a) \ge a$, and Proposition~\ref{prop:edge} forces it again by transport.

\subsection*{The variational identity}

Whether a constant is admissible reduces to a pointwise inequality in $p$ between the maximal spread and the tangent functional $\Lambda$, and Proposition~\ref{prop:var} makes that reduction. We need the supremum in \eqref{eqn:numax} to be attained and unchanged under $p \mapsto 1-p$. The next lemma supplies both, along with the two monotonicity properties used later, in the spread and in the window scale.

\begin{lem}
\label{lem:mono-nu}
Fix $p \in (0,1)$ and $a > 0$.
\begin{itemize}
    \item[(i)] For every $\lambda \ne 0$, the map $\nu \mapsto M_{Y_{p,\nu}}(\lambda)$ is strictly increasing on $(0,\infty)$.
    \item[(ii)] $2 \le \nu_{\max}(p,a) < \infty$, the supremum in \eqref{eqn:numax} is attained, and $\{\nu > 0 : Y_{p,\nu} \in \SE(1,a)\} = (0, \nu_{\max}(p,a)]$.
    \item[(iii)] $\nu_{\max}(p,a) = \nu_{\max}(1-p,a)$, and $a \mapsto \nu_{\max}(p,a)$ is nondecreasing.
\end{itemize}
\end{lem}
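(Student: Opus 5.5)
For (i) I will write $M_{Y_{p,\nu}}(\lambda)=\phi(\nu\lambda)$, where $\phi(s)\coloneqq p\,e^{(1-p)s}+(1-p)e^{-ps}$. This $\phi$ is the moment generating function of a nondegenerate centered variable, so it is strictly convex with $\phi'(0)=0$. Hence $\phi$ is strictly increasing on $(0,\infty)$ and strictly decreasing on $(-\infty,0)$. For fixed $\lambda\neq0$ the map $\nu\mapsto\nu\lambda$ moves $s$ monotonically away from $0$, so $\nu\mapsto\phi(\nu\lambda)$ is strictly increasing. A direct check works as well: $\frac{\diff}{\diff\nu}\phi(\nu\lambda)=\lambda\phi'(\nu\lambda)$, and $\lambda$ and $\phi'(\nu\lambda)$ have the same sign.

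For (ii) I will work with the feasible set $F\coloneqq\{\nu>0: Y_{p,\nu}\in\SE(1,a)\}$. By (i) it is a down-set: if $\nu\in F$ and $\nu'<\nu$, then $M_{Y_{p,\nu'}}(\lambda)\le M_{Y_{p,\nu}}(\lambda)\le e^{\lambda^2/2}$ for every $\lambda$ in the window, so $\nu'\in F$. It is closed in $(0,\infty)$ because, for each fixed $\lambda$, the defining inequality is a continuous condition in $\nu$, and an intersection of closed sets is closed. So once finiteness is known, $F=(0,\nu_{\max}]$ with the supremum attained. Finiteness comes from Lemma~\ref{lem:nu} with $\sigma=1$, which gives $\nu\le\sqrt{2\beta_p/(1-2p)}\vee 2a|\beta_p|<\infty$. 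For the lower bound $\nu_{\max}\ge2$ I will show that $Y_{p,2}$ is $1$-sub-Gaussian on all of $\R$. By the classical Kearns--Saul inequality \eqref{eq:intro-ks}, applied to $\lambda\nu$ (and to $1-p$ by symmetry), $Y_{p,\nu}$ is sub-Gaussian with variance proxy $2\nu^2(1-2p)/(4\beta_p)$. Requiring this to be at most $1$ means $\nu^2\le 2\beta_p/(1-2p)$. The right-hand side is at least $4$, since $\beta_p/(1-2p)\ge2$, with its limit value $2$ at $p=1/2$. Alternatively, I can use Hoeffding's lemma directly: a variable supported on an interval of length $\nu=2$ is $(\nu/2)^2=1$-sub-Gaussian. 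This is the cleanest route and avoids the $p=1/2$ caveat, so I will use it.

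For (iii), $-Y_{p,\nu}$ has law $(1-p)\delta_{p\nu}+p\,\delta_{-(1-p)\nu}$, which is exactly $Y_{1-p,\nu}$. Since the window $|\lambda|\le 1/a$ and the ceiling $e^{\lambda^2/2}$ are both symmetric under $\lambda\mapsto-\lambda$, membership of $Y_{p,\nu}$ and of $Y_{1-p,\nu}$ are equivalent, so the two feasible sets, and hence their suprema, coincide. For monotonicity in $a$: if $a\le a'$ then $[-1/a',1/a']\subseteq[-1/a,1/a]$. Membership in $\SE(1,a)$ therefore imposes more constraints than membership in $\SE(1,a')$, so the feasible sets are nested and $\nu_{\max}(p,a)\le\nu_{\max}(p,a')$.

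The only step requiring care is the closedness and attainment in (ii). The point to get right is that the constraint is imposed on the closed window $|\lambda|\le1/a$; this makes each constraint continuous in $\nu$, and with the finiteness from Lemma~\ref{lem:nu} attainment follows. Everything else is elementary.
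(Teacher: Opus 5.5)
Your proposal is correct and follows the paper's proof essentially step by step. The shared steps are the sign argument for (i), Hoeffding's lemma for $\nu_{\max}\ge 2$, the down-set plus closedness argument for attainment, and reflection together with nested windows for (iii). The only real difference is finiteness: you take it from the spread bound of Lemma~\ref{lem:nu}, which is proved independently of this lemma, so there is no circularity. The paper instead uses the direct blow-up $M_{Y_{p,\nu}}(1/a)\ge p\,e^{(1-p)\nu/a}\to\infty$ at the window endpoint.
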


\begin{proof}
(i) Differentiating \eqref{eqn:Ypnu} in $\nu$,
\begin{align*}
    \frac{\partial}{\partial\nu} M_{Y_{p,\nu}}(\lambda)
    = \lambda\,p(1-p)\left(e^{\lambda(1-p)\nu} - e^{-\lambda p\nu}\right).
\end{align*}
The two exponents differ by $\lambda(1-p)\nu - (-\lambda p\nu) = \lambda\nu$, so, the exponential being strictly increasing,
\begin{align*}
    \sign\left(e^{\lambda(1-p)\nu} - e^{-\lambda p\nu}\right) = \sign(\lambda\nu) = \sign(\lambda)
    \qquad (\nu > 0),
\end{align*}
and the derivative is positive for every $\lambda \ne 0$.

(ii) \emph{The lower bound $\nu_{\max} \ge 2$.} Hoeffding's lemma \citep[proof of Theorem 2]{hoeffding1963} states that any random variable $V$ with $\E[V] = 0$ and $u_- \le V \le u_+$ almost surely satisfies
\begin{align*}
    \E\left[e^{\lambda V}\right] \le \exp\left(\frac{\lambda^2\left(u_+ - u_-\right)^2}{8}\right)
    \qquad \text{for all } \lambda \in \R;
\end{align*}
see also \citet[Chapter 2]{wainwright2019high}. The variable $Y_{p,\nu}$ is supported on $\{-p\nu, (1-p)\nu\}$, has mean zero, and has range $u_+ - u_- = \nu$, so
\begin{align*}
    M_{Y_{p,\nu}}(\lambda) \le e^{\lambda^2\nu^2/8},
\end{align*}
and at $\nu = 2$ the right side is exactly $e^{\lambda^2/2}$: the spread $\nu = 2$ is feasible, and $\nu_{\max} \ge 2$.

\smallskip
\emph{Finiteness.} At the window endpoint $\lambda = 1/a$,
\begin{align*}
    M_{Y_{p,\nu}}\left(\frac1a\right) \ge p\,e^{(1-p)\nu/a} \longrightarrow \infty
    \qquad (\nu \to \infty),
\end{align*}
so \eqref{eqn:subexp} fails at $\lambda = 1/a$ for every $\nu$ large enough, and $\nu_{\max} < \infty$.

\smallskip
\emph{The feasible set is $(0,\nu_{\max}]$.} If $\nu$ is feasible and $0 < \nu' < \nu$, then for every $\lambda$ in the window, by (i),
\begin{align*}
    M_{Y_{p,\nu'}}(\lambda) \le M_{Y_{p,\nu}}(\lambda) \le e^{\lambda^2/2},
\end{align*}
so $\nu'$ is feasible: the feasible set is downward closed. It is also closed, because $(\nu,\lambda) \mapsto M_{Y_{p,\nu}}(\lambda)$ is continuous and \eqref{eqn:subexp} is a pointwise nonstrict inequality over the compact window. Being nonempty, bounded, downward closed and closed, the feasible set is the interval $(0,\nu_{\max}]$; in particular the supremum in \eqref{eqn:numax} is attained.

(iii) \emph{Reflection.} The variable $-Y_{p,\nu}$ has the law of $Y_{1-p,\nu}$, and the class is closed under sign flip with the same parameters; hence $\nu$ is feasible for $p$ if and only if it is feasible for $1-p$, and $\nu_{\max}(p,a) = \nu_{\max}(1-p,a)$. \emph{Monotonicity in $a$.} Increasing $a$ shrinks the window $[-1/a, 1/a]$, so that \eqref{eqn:subexp} is imposed at fewer points. Every feasible $\nu$ therefore remains feasible, and $a \mapsto \nu_{\max}(p,a)$ is nondecreasing.
\end{proof}

\begin{prop}[Variational identity]
\label{prop:var}
Let $a > 0$ and $c > 0$. Then $X \lecx c\Lap$ holds for every $X \in \SE(1,a)$ if and only if
\begin{align*}
    p(1-p)\,\nu_{\max}(p,a) \le c\,\Lambda(p)
    \qquad \text{for all } p \in (0, 1/2],
\end{align*}
with $\Lambda$ as in \eqref{eqn:Lambda}. Consequently,
\begin{align}
    c^{\SE}(1, a) = \sup_{p \in (0,1/2]} R(p,a),
    \qquad
    R(p,a) \coloneqq \frac{(1-p)\,\nu_{\max}(p,a)}{1 + \log\frac{1}{2p}},
    \label{eqn:varformula}
\end{align}
and the infimum in \eqref{eqn:defc} is a minimum: $c^{\SE}(1,a)$ is itself admissible.
\end{prop}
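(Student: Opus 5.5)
We prove the two directions of the equivalence separately. Then \eqref{eqn:varformula} and the attainment of the infimum follow by reading the condition as $c \ge R(p,a)$ for every $p$.

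\emph{Sufficiency.} Assume $p(1-p)\nu_{\max}(p,a) \le c\,\Lambda(p)$ for all $p \in (0,1/2]$, and fix $X \in \SE(1,a)$. By Proposition~\ref{prop:hinge}, Lemma~\ref{lem:center} and Lemma~\ref{lem:half}, it suffices to show $S_X(t) \le S_{c\Lap}(t)$ for every $t \ge 0$ with $p_t \in (0,1)$. Lemma~\ref{lem:Y} replaces $X$ by the projected two-point member $Y = Y_{p,\nu}$ with $p = p_t$, and Lemma~\ref{lem:mono-nu}(ii) gives $\nu \le \nu_{\max}(p,a)$. Formula \eqref{eqn:SY} and the trivial bounds of Lemma~\ref{lem:stoploss} reduce the claim to \eqref{eqn:remains}. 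We split on $p$.

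For $p \le 1/2$, the hypothesis and the tangent bound \eqref{eqn:lb} give
\[
S_{c\Lap}(t) \ge c\Lambda(p) - pt \ge p(1-p)\nu - pt .
\]
For $p > 1/2$, we use the reflection $\nu_{\max}(p,a) = \nu_{\max}(1-p,a)$ from Lemma~\ref{lem:mono-nu}(iii), so the hypothesis at $q = 1-p < 1/2$ yields $p(1-p)\nu \le c\,\Lambda(1-p)$. We then need
\[
c\,\Lambda(q) - pt \le S_{c\Lap}(t) \qquad \text{for } t \ge 0 .
\]
Since $\Lambda(q) \le 1/2$, and $\Lambda$ is increasing on $(0,1/2]$ with $\Lambda(1/2) = 1/2$, the estimate \eqref{eqn:case2} gives $S_{c\Lap}(t) \ge c/2 - pt \ge c\Lambda(q) - pt$. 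This settles \eqref{eqn:remains} in both cases.

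\emph{Necessity.} Suppose $X \lecx c\Lap$ for every member, and fix $p \in (0,1/2]$. Let $Y = Y_{p,\nu_{\max}}$, which is feasible because the supremum is attained (Lemma~\ref{lem:mono-nu}(ii)). Evaluate the stop-loss comparison at the tangent point $t_p = c\log(1/(2p))$. We must check that $t_p$ lies in the range where $S_Y(t) = p((1-p)\nu - t)$, i.e. that $t_p \le (1-p)\nu_{\max}$. If instead $t_p > (1-p)\nu_{\max}$, then $S_Y(t_p) = 0$ and the comparison gives nothing. To avoid this, we argue via the linear piece: for every $t$,
\[
S_{c\Lap}(t) \ge S_Y(t) \ge p\bigl((1-p)\nu_{\max} - t\bigr).
\]
Taking $t = t_p$ gives $cp \ge p(1-p)\nu_{\max} - p\,c\log(1/(2p))$, which is exactly $p(1-p)\nu_{\max} \le c\Lambda(p)$. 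This holds regardless of where $t_p$ sits, because $S_Y \ge$ the affine function everywhere (the affine function is one of the maxima in \eqref{eqn:SY}). So the necessity direction is in fact a one-line evaluation.

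\emph{Consequences.} The condition is equivalent to $c \ge R(p,a)$ for every $p \in (0,1/2]$, so the set of admissible $c$ is $[\sup_p R, \infty)$, which gives \eqref{eqn:varformula}. Finiteness of the supremum follows from Proposition~\ref{prop:firstbound}. Because the condition is closed in $c$, the value $\sup_p R$ is itself admissible, and the infimum in \eqref{eqn:defc} is a minimum.

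The main obstacle is the $p > 1/2$ range of the sufficiency direction, where the tangent bound is unavailable. The plan handles it through the reflection identity combined with the monotonicity bound $\Lambda \le 1/2$. The remaining point to verify is that \eqref{eqn:case2} indeed holds with $pt$ for $p \ge 1/2$; it does, since $t \ge 0$.
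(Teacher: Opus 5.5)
Your proposal is correct and follows the paper's proof essentially step for step. For necessity, you test the attained extremal member $Y_{p,\nu_{\max}(p,a)}$ against $c\Lap$ at the tangent point $t_p=c\log(1/(2p))$, using only that $S_Y$ dominates its linear branch everywhere. For sufficiency, you use the two-point projection and the tangent bound \eqref{eqn:lb} for $p\le 1/2$, and for $p>1/2$ the reflection $\nu_{\max}(p,a)=\nu_{\max}(1-p,a)$ together with $\Lambda\le 1/2$ and $e^{-u}\ge 1-u$. The consequence is also read off as in the paper: the admissible constants form a closed half-line, which is nonempty by Proposition~\ref{prop:firstbound}.
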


\begin{proof}
We show first that $X \lecx c\Lap$ holds for every $X \in \SE(1,a)$ if and only if
\begin{align}
    p(1-p)\,\nu_{\max}(p,a) \le c\,\Lambda(p)
    \qquad \text{for all } p \in (0, 1/2],
    \label{eqn:varcond}
\end{align}
and then convert this characterization into \eqref{eqn:varformula}. Each step that uses membership in $\SE(1,a)$ names the lemma it draws on, and Appendix~\ref{app:sg} replaces those lemmas one by one.

\smallskip
\emph{Necessity: the extremal member.} Fix $p \in (0, 1/2]$ and abbreviate $\nu \coloneqq \nu_{\max}(p,a)$. The supremum in \eqref{eqn:numax} is attained (Lemma~\ref{lem:mono-nu}\,(ii)), so the member $Y \coloneqq Y_{p,\nu}$ belongs to $\SE(1,a)$. If every member of $\SE(1,a)$ is dominated by $c\Lap$, then in particular $Y \lecx c\Lap$, and Proposition~\ref{prop:hinge} gives $S_Y \le S_{c\Lap}$ pointwise.

\smallskip
\emph{Necessity: the tangent test.} At the point $t_p \coloneqq c\log(1/(2p)) \ge 0$,
\begin{align*}
    p\left((1-p)\nu - t_p\right) \le S_Y(t_p) \le S_{c\Lap}(t_p) = cp,
\end{align*}
using the linear branch of \eqref{eqn:SY} for the first bound, the domination for the second, and \eqref{eqn:explicit} with $b = c$ for the last equality. Rearranging,
\begin{align*}
    p(1-p)\nu \le cp + p\,t_p = cp + cp\log\frac{1}{2p} = c\,\Lambda(p),
\end{align*}
which is \eqref{eqn:varcond}.

\smallskip
\emph{Sufficiency.} Let $X \in \SE(1,a)$ and assume \eqref{eqn:varcond}. By Lemma~\ref{lem:half}, which draws on the closure of the class under sign flip and on the centering of Lemma~\ref{lem:center}, it suffices to prove $S_X(t) \le S_{c\Lap}(t)$ for $t \ge 0$; fix such a $t$. The cases $p_t \in \{0,1\}$ are settled as before \eqref{eqn:Y}: $p_t = 0$ gives $S_X(t) = 0 \le S_{c\Lap}(t)$, and $p_t = 1$ would force $X > t \ge 0$ almost surely, contradicting $\E[X] = 0$ (Lemma~\ref{lem:center}). So let $p \coloneqq p_t \in (0,1)$ and let $Y = Y_{p,\nu}$ be the projected member of Lemma~\ref{lem:Y}, with spread $\nu > 0$ as in \eqref{eqn:order}. Since $Y \in \SE(1,a)$, Lemma~\ref{lem:mono-nu}\,(ii) gives $\nu \le \nu_{\max}(p,a)$, and by \eqref{eqn:SY},
\begin{align*}
    S_X(t) = S_Y(t) = 0 \vee (-t) \vee p\left((1-p)\nu - t\right).
\end{align*}
Lemma~\ref{lem:stoploss} gives $S_{c\Lap}(t) \ge 0 \vee (-t)$, so it remains to dominate the linear branch:
\begin{align}
    p\left((1-p)\nu - t\right) \le S_{c\Lap}(t).
    \label{eqn:linear-branch}
\end{align}

\smallskip
\emph{Case $p \le 1/2$.} By $\nu \le \nu_{\max}(p,a)$, then \eqref{eqn:varcond}, then the tangent bound \eqref{eqn:lb} with $b = c$,
\begin{align*}
    p\left((1-p)\nu - t\right) \le p(1-p)\,\nu_{\max}(p,a) - pt \le c\,\Lambda(p) - pt \le S_{c\Lap}(t).
\end{align*}

\smallskip
\emph{Case $p > 1/2$.} Set $p' \coloneqq 1 - p \in (0, 1/2)$. Reflection gives $p(1-p) = p'(1-p')$ and $\nu_{\max}(p,a) = \nu_{\max}(p',a)$ (Lemma~\ref{lem:mono-nu}\,(iii)), while $\Lambda$ is nondecreasing on $(0, 1/2]$:
\begin{align*}
    \Lambda'(p) = \frac{\diff}{\diff p}\left(p + p\log\frac{1}{2p}\right)
    = 1 + \log\frac{1}{2p} - 1 = \log\frac{1}{2p} \ge 0,
    \qquad\text{so}\qquad
    \Lambda(p') \le \Lambda\left(\frac{1}{2}\right) = \frac{1}{2}.
\end{align*}
Hence
\begin{align*}
    p(1-p)\nu = p'(1-p')\nu \le p'(1-p')\,\nu_{\max}(p',a) \le c\,\Lambda(p') \le \frac{c}{2},
\end{align*}
the equality by reflection, the first inequality by $\nu \le \nu_{\max}(p,a) = \nu_{\max}(p',a)$, the second by \eqref{eqn:varcond} applied at $p'$, and the last by the monotonicity of $\Lambda$. Finally, $e^{-u} \ge 1 - u$ at $u = t/c \ge 0$ and $pt \ge t/2$ since $p > 1/2$ and $t \ge 0$, so
\begin{align*}
    S_{c\Lap}(t) = \frac{c}{2}\,e^{-t/c}
    \ge \frac{c}{2} - \frac{t}{2}
    \ge \frac{c}{2} - pt
    \ge p(1-p)\nu - pt
    = p\left((1-p)\nu - t\right),
\end{align*}
the first two inequalities by these two elementary bounds and the last by the preceding display. This proves \eqref{eqn:linear-branch} in both cases, so $S_X \le S_{c\Lap}$ pointwise; with the centering of Lemma~\ref{lem:center}, Proposition~\ref{prop:hinge} turns that into $X \lecx c\Lap$, which is sufficiency.

\smallskip
\emph{The formula \eqref{eqn:varformula}.} For each $p \in (0, 1/2]$, dividing \eqref{eqn:varcond} by $\Lambda(p) = p(1+\log(1/(2p))) > 0$ shows that $c$ is admissible if and only if
\begin{align*}
    c \ge \frac{p(1-p)\,\nu_{\max}(p,a)}{\Lambda(p)}
    = \frac{(1-p)\,\nu_{\max}(p,a)}{1 + \log\frac{1}{2p}}
    = R(p,a)
    \qquad\text{for all } p \in (0, 1/2].
\end{align*}
The set of admissible constants is therefore the closed half-line $\left[\sup_{p} R(p,a),\ \infty\right)$, nonempty because Proposition~\ref{prop:firstbound} exhibits an admissible constant: this is \eqref{eqn:varformula}, and the infimum in \eqref{eqn:defc} is a minimum.
\end{proof}

\subsection*{The exact maximal spread}

Proposition~\ref{prop:var} reduces the computation of $c^{\SE}(1,a)$ to that of $\nu_{\max}(p,a)$, which Corollary~\ref{cor:numax} determines exactly. A first lemma halves the window: for $p \le 1/2$, the constraint \eqref{eqn:subexp} at $-\lambda$ follows from the constraint at $\lambda$, $\lambda > 0$.

\begin{lem}[Binding sign]
\label{lem:sign}
Let $p \in (0, 1/2]$, $\nu > 0$ and $\lambda > 0$. Then
\begin{align*}
    M_{Y_{p,\nu}}(-\lambda) \le M_{Y_{p,\nu}}(\lambda),
\end{align*}
with equality if and only if $p = 1/2$. Consequently $Y_{p,\nu} \in \SE(1,a)$ if and only if $M_{Y_{p,\nu}}(\lambda) \le e^{\lambda^2/2}$ for all $\lambda \in (0, 1/a]$.
\end{lem}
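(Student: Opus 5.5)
The plan is to compute the difference $D(\lambda)\coloneqq M_{Y_{p,\nu}}(\lambda)-M_{Y_{p,\nu}}(-\lambda)$ explicitly and show that it is nonnegative for $\lambda>0$, vanishing only when $p=1/2$. From \eqref{eqn:Ypnu},
\begin{align*}
    D(\lambda) = p\left(e^{\lambda(1-p)\nu}-e^{-\lambda(1-p)\nu}\right) - (1-p)\left(e^{\lambda p\nu}-e^{-\lambda p\nu}\right)
    = 2p\sinh\bigl((1-p)u\bigr) - 2(1-p)\sinh(pu),
\end{align*}
with $u\coloneqq\lambda\nu>0$. Dividing by $2p(1-p)u>0$, the sign of $D$ is the sign of
\begin{align*}
    \frac{\sinh((1-p)u)}{(1-p)u} - \frac{\sinh(pu)}{pu}.
\end{align*}

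The key fact is that $x\mapsto \sinh(x)/x=\sum_{m\ge0}x^{2m}/(2m+1)!$ is strictly increasing on $(0,\infty)$, since every coefficient is positive and at least one term is nonconstant. For $p\in(0,1/2]$ we have $(1-p)u\ge pu>0$, with equality exactly when $p=1/2$. Hence the displayed difference is $\ge0$, and it is strictly positive for $p<1/2$. This proves $M(-\lambda)\le M(\lambda)$, with equality if and only if $p=1/2$, the case $p=1/2$ being trivially an equality.

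For the consequence, note that membership in $\SE(1,a)$ means $M(\lambda)\le e^{\lambda^2/2}$ for all $|\lambda|\le 1/a$. At $\lambda=0$ both sides equal $1$. For negative frequencies $-\lambda$ with $\lambda\in(0,1/a]$, we get $M(-\lambda)\le M(\lambda)\le e^{\lambda^2/2}=e^{(-\lambda)^2/2}$, because the ceiling is even. So the condition on $(0,1/a]$ implies the full condition. The converse direction is immediate.

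No real obstacle is expected. The only point needing care is the reduction to the monotonicity of $\sinh(x)/x$, after which the argument is a series-coefficient check.
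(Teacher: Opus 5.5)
Your proposal is correct and follows the paper's proof essentially line for line: you use the same $\sinh$ rewriting, the same factorization by $2p(1-p)u$, and the same reduction to strict monotonicity of $\sinh(x)/x$. The only minor difference is how monotonicity is justified. You use the power series with positive coefficients, while the paper differentiates and uses $\tanh y < y$; both are valid.
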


\begin{proof}
With $u \coloneqq \lambda\nu > 0$,
\begin{align*}
    M_{Y_{p,\nu}}(\lambda) - M_{Y_{p,\nu}}(-\lambda)
    &= 2p\sinh\left((1-p)u\right) - 2(1-p)\sinh(pu)
\\
    &= 2p(1-p)\,u\left(\frac{\sinh((1-p)u)}{(1-p)u} - \frac{\sinh(pu)}{pu}\right),
\end{align*}
the first equality by \eqref{eqn:Ypnu} and $2\sinh x = e^x - e^{-x}$. The map $r \mapsto \sinh(ru)/(ru)$ is strictly increasing on $(0,\infty)$: writing $y = ru$,
\begin{align*}
    \frac{\diff}{\diff r}\,\frac{\sinh(ru)}{ru}
    = \frac{u\left(ru\cosh(ru) - \sinh(ru)\right)}{(ru)^2}
    = \frac{u\cosh y}{y^{2}}\left(y - \tanh y\right) > 0,
\end{align*}
since $\tanh y < y$ for $y > 0$. As $1 - p \ge p$, the bracket is nonnegative; the strictness of the monotonicity is what confines the equality case: the bracket vanishes only when $1-p = p$, that is, at $p = 1/2$. As for the consequence, the constraint \eqref{eqn:subexp} at $-\lambda$, $\lambda > 0$, is implied by the constraint at $\lambda$, the right side $e^{\lambda^2/2}$ being even in $\lambda$.
\end{proof}

The following unimodality lemma is due in substance to \citet[Theorem 1.1]{schlemm2016}; we include a short proof because Proposition~\ref{prop:ks} and Corollary~\ref{cor:numax} use the full strict monotonicity of $\psi$, not only the maximal value \eqref{eqn:KSpeak}. {It also makes the sub-Gaussian branch of Lemma~\ref{lem:nu} exact: imposing \eqref{eqn:subexp} at every frequency of $\R$ leaves the maximal spread $\sigma\sqrt{2\beta_p/(1-2p)}$, and the test point $\lambda_* = 2\beta_p/\nu$ of \citet{zhang2026sharpmgf} is the exact maximizer of $\lambda \mapsto \lambda^{-2}\log M_{Y_{p,\nu}}(\lambda)$.} Let $Z_p \coloneqq Y_{p,1}$, taking the value $1-p$ with probability $p$ and $-p$ with probability $1-p$, and set
\begin{align}
&&
    g(s) &\coloneqq \log M_{Z_p}(s) = -ps + \log\left(1 - p + p\,e^{s}\right), &
    \psi(s) &\coloneqq \frac{g(s)}{s^2} \quad (s > 0), &
& \label{eqn:gpsi}
\end{align}
both depending on $p$, which is held fixed wherever they appear, and with $\psi(0^+) = p(1-p)/2$ by Taylor expansion, since $g(0) = g'(0) = 0$ and $g''(0) = p(1-p)$. Since $Y_{p,\nu} \overset{d}{=} \nu Z_p$, one has $M_{Y_{p,\nu}}(\lambda) = M_{Z_p}(\lambda\nu)$, and the substitution
\begin{align*}
&&
    s &= \lambda\nu, &
    \lambda &= \frac{s}{\nu}, &
    0 < \lambda \le \frac1a \ &\Longleftrightarrow\ 0 < s \le \frac{\nu}{a}, &
&\end{align*}
turns the constraint $\log M_{Y_{p,\nu}}(\lambda) \le \lambda^2/2$ into $g(s) \le s^2/(2\nu^2)$, that is, into $\psi(s) \le 1/(2\nu^2)$. As Lemma~\ref{lem:sign} reduces the window to $\lambda \in (0, 1/a]$, we obtain, for $p \in (0, 1/2]$,
\begin{align}
    Y_{p,\nu} \in \SE(1,a)
    \quad\Longleftrightarrow\quad
    \psi(s) \le \frac{1}{2\nu^2} \quad \text{for all } s \in \left(0, \frac{\nu}{a}\right].
    \label{eqn:feas-psi}
\end{align}

\begin{lem}[Unimodality of the sub-Gaussian ratio]
\label{lem:psi}
Fix $p \in (0, 1/2)$. The function $\psi$ is strictly increasing on $(0, 2\beta_p]$ and strictly decreasing on $[2\beta_p, \infty)$, with maximum
\begin{align}
    \psi(2\beta_p) = \frac{1 - 2p}{4\beta_p}.
    \label{eqn:KSpeak}
\end{align}
For $p = 1/2$, $\psi$ is strictly decreasing on $(0,\infty)$ with $\sup_{s>0}\psi(s) = \psi(0^+) = 1/8$.
\end{lem}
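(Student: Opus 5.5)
The plan is to study the sign of $\psi'(s)$ through an auxiliary function whose sign changes exactly once. Since $\psi(s)=g(s)/s^2$, we have
\[
\psi'(s)=\frac{s g'(s)-2g(s)}{s^3},
\]
so it suffices to study $D(s)\coloneqq s g'(s)-2g(s)$ on $(0,\infty)$. We have $D(0)=0$ and $D'(s)=s g''(s)-g'(s)$, again vanishing at $0$, and $D''(s)=s g'''(s)$. Write $q(s)\coloneqq p e^{s}/(1-p+pe^{s})\in(0,1)$, the tilted probability of the upper atom. Then
\[
g'(s)=q(s)-p,\qquad g''(s)=q(1-q),\qquad g'''(s)=q(1-q)(1-2q).
\]
Hence $D''$ has the sign of $1-2q(s)$. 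This is positive while $q<1/2$, i.e.\ for $s<\beta_p$, and negative for $s>\beta_p$, because $q(\beta_p)=1/2$ and $q$ is strictly increasing. For $p=1/2$ we have $\beta_p=0$, so $D''<0$ on $(0,\infty)$.

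From $D'(0)=0$, the function $D'$ increases on $(0,\beta_p]$ and then strictly decreases. As $s\to\infty$, $g''(s)\to0$ exponentially fast while $g'(s)\to1-p>0$, so $D'(s)\to-(1-p)<0$. Therefore $D'$ has exactly one zero $s_1>\beta_p$: it is positive on $(0,s_1)$ and negative afterwards. Likewise $D(0)=0$, $D$ increases on $(0,s_1)$, and $D(s)\to-\infty$, because $sg'(s)-2g(s)\sim (1-p)s-2(1-p)s$ using $g(s)\sim(1-p)s$. So $D$ has exactly one positive zero $s_0>s_1$, with $D>0$ on $(0,s_0)$ and $D<0$ beyond. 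This gives the strict unimodality of $\psi$ with maximizer $s_0$. In the case $p=1/2$, $D''<0$ forces $D'<0$ and hence $D<0$ on $(0,\infty)$. Thus $\psi$ is strictly decreasing, and its supremum is $\psi(0^+)=p(1-p)/2=1/8$.

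It remains to identify $s_0=2\beta_p$ by checking $D(2\beta_p)=0$. At $s=2\beta_p$ we have $e^{s}=((1-p)/p)^2$. This gives $q=(1-p)^2/((1-p)^2+p(1-p))=1-p$, so $g'(2\beta_p)=1-2p$. The computation \eqref{eqn:MYtest} gives $g(2\beta_p)=(1-2p)\beta_p$. Then
\[
D(2\beta_p)=2\beta_p(1-2p)-2(1-2p)\beta_p=0,
\]
and the value is $\psi(2\beta_p)=(1-2p)\beta_p/(4\beta_p^2)$, which is \eqref{eqn:KSpeak}.

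The main obstacle is the sign bookkeeping in the middle step. The limits of $D'$ and $D$ at infinity must be established cleanly so that each has a unique crossing. The exact-value check is then routine.
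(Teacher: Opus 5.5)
Your proof is correct and is essentially the paper's argument. It uses the same auxiliary function $D(s)=sg'(s)-2g(s)$, the same sign pattern of $D''=sg'''$ read off from the tilted mass $q$, and the same anchor $D(2\beta_p)=0$. The only difference is how you get the single sign change of $D'$ and of $D$: you take their limits at infinity and then identify the unique zero of $D$ with $2\beta_p$. The paper instead uses the anchor itself, by contradiction, to produce the zero $s_1\in(\beta_p,2\beta_p)$ of $D'$, which avoids any asymptotics.
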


\begin{proof}
\emph{Cumulant identities.} Write $q_s \coloneqq p e^{s}/(1 - p + p e^{s}) = (1 + e^{\beta_p - s})^{-1}$ for the mass that the exponential tilt of $Z_p$ at $s$ places on the upper atom. Direct differentiation of \eqref{eqn:gpsi} gives $q_s' = q_s(1-q_s)$ and
\begin{align}
&&
    g'(s) &= q_s - p, &
    g''(s) &= q_s(1 - q_s), &
    g'''(s) &= q_s(1 - q_s)(1 - 2q_s). &
& \label{eqn:cumulants}
\end{align}
Since $s \mapsto q_s$ is strictly increasing with $q_{\beta_p} = 1/2$, we have $g''' > 0$ on $(0,\beta_p)$ and $g''' < 0$ on $(\beta_p,\infty)$; for $p = 1/2$, $\beta_p = 0$ and $g''' < 0$ on all of $(0,\infty)$. The sign of $\psi'$ is carried by a single combination of $g$ and $g'$, whose second derivative returns this sign pattern.

Let $D(s) \coloneqq s\,g'(s) - 2g(s)$, so that $\psi'(s) = D(s)/s^3$ and
\begin{align*}
&&
    D(0) &= 0, &
    D'(s) &= s\,g''(s) - g'(s), &
    D'(0) &= 0, &
    D''(s) &= s\,g'''(s). &
&\end{align*}
\emph{Anchors at $2\beta_p$.} Let $p < 1/2$. Two exact evaluations anchor the argument:
\begin{align*}
&&
    q_{2\beta_p} &= \frac{1}{1 + e^{-\beta_p}} = 1 - p, &
    g'(2\beta_p) &= 1 - 2p, &
    g(2\beta_p) &= (1-2p)\,\beta_p, &
&\end{align*}
the second entry by $g' = q_s - p$ in \eqref{eqn:cumulants}, the last identity being \eqref{eqn:MYtest} with $\nu = 1$, for which the test point is exactly $2\beta_p$. Therefore
\begin{align}
    D(2\beta_p) = 2\beta_p(1 - 2p) - 2(1-2p)\beta_p = 0.
    \label{eqn:D2beta}
\end{align}

\smallskip
\emph{Sign pattern of $D$.} On $(0,\beta_p)$, $D'' > 0$ and $D'(0) = 0$ give $D' > 0$, hence by continuity $D' > 0$ on $(0,\beta_p]$. Since $D(0) = 0$, it follows that $D > 0$ on $(0,\beta_p]$. On $(\beta_p,\infty)$, $D'' < 0$, so $D'$ is strictly decreasing there. If $D'$ were nonnegative on all of $(\beta_p, 2\beta_p]$, then $D(2\beta_p) \ge D(\beta_p) > 0$, contradicting \eqref{eqn:D2beta}. Hence $D'$ vanishes at some $s_1 \in (\beta_p, 2\beta_p)$, unique by the strict decrease of $D'$, with $D' > 0$ on $(0,s_1)$ and $D' < 0$ on $(s_1,\infty)$. Thus $D$ is strictly increasing on $(0,s_1]$ and strictly decreasing on $[s_1,\infty)$. Combined with $D(0) = 0$, $D(s_1) > 0$ and \eqref{eqn:D2beta}, this forces $D > 0$ on $(0, 2\beta_p)$ and $D < 0$ on $(2\beta_p, \infty)$. Since $\psi'$ has the sign of $D$, the claimed strict unimodality follows. Finally,
\begin{align*}
    \psi(2\beta_p) = \frac{g(2\beta_p)}{(2\beta_p)^2} = \frac{(1-2p)\,\beta_p}{4\beta_p^2} = \frac{1-2p}{4\beta_p},
\end{align*}
which is \eqref{eqn:KSpeak}.

\smallskip
\emph{The case $p = 1/2$.} Here $D'' = s\,g''' < 0$ on $(0,\infty)$; together with $D'(0) = 0$ this gives $D' < 0$ on $(0,\infty)$, and $D(0) = 0$ then gives $D < 0$ there. So $\psi$ is strictly decreasing, and $\psi(0^+) = p(1-p)/2 = 1/8$.
\end{proof}

\begin{proof}[Proof of Proposition~\ref{prop:ks}]
{Since $\berv - p \overset{d}{=} Z_p$, the smallest constant admissible in \eqref{eqn:kswindow} is}
\begin{align*}
    {}
    A_p(a) = \sup_{0 < |\lambda| \le 1/a} \frac{g(\lambda)}{\lambda^2},
\end{align*}
{with $g$ as in \eqref{eqn:gpsi}. Lemma~\ref{lem:sign} at $\nu = 1$ gives $g(-\lambda) \le g(\lambda)$ for $\lambda > 0$, strictly when $p < 1/2$, and $\lambda^2$ is even, so the supremum is unchanged if $\lambda$ is restricted to the positive half of the window, where the ratio is $\psi$:}
\begin{align*}
    {}
    A_p(a) = \sup_{0 < \lambda \le 1/a} \psi(\lambda).
\end{align*}

{\emph{The case $p < 1/2$.} By Lemma~\ref{lem:psi} the function $\psi$ increases strictly on $(0, 2\beta_p]$ and decreases strictly on $[2\beta_p,\infty)$. A strictly unimodal function on a half-open interval $(0,T]$ attains its supremum at the single point $T \wedge 2\beta_p$, so}
\begin{align*}
    {}
    A_p(a) = \psi\!\left(2\beta_p \wedge \frac1a\right),
\end{align*}
{attained at that point and nowhere else in $(0,1/a]$. Evaluating the two cases,}
\begin{align*}
    {}
    2\beta_p \le \frac1a
    \quad&\Longrightarrow\quad
    A_p(a) = \psi(2\beta_p) = \frac{1 - 2p}{4\beta_p},
\\
    {}
    2\beta_p > \frac1a
    \quad&\Longrightarrow\quad
    A_p(a) = \psi\!\left(\frac1a\right) = a^2 g\!\left(\frac1a\right)
    = a^2\log\!\left(p\,e^{(1-p)/a} + (1-p)\,e^{-p/a}\right),
\end{align*}
{the first evaluation by \eqref{eqn:KSpeak} and the second by $\psi(s) = g(s)/s^2$ at $s = 1/a$. These are the two branches of \eqref{eqn:ksA}. The maximizer is $\lambda = 2\beta_p$ in the first case and the window endpoint $\lambda = 1/a$ in the second; on the negative half of the window the inequality \eqref{eqn:kswindow} is strict, by Lemma~\ref{lem:sign}. This is the equality case of the statement.}

{\emph{The case $p = 1/2$.} Here Lemma~\ref{lem:psi} gives $\psi$ strictly decreasing on $(0,\infty)$ with $\sup_{s > 0}\psi(s) = \psi(0^+) = 1/8$, so $A_{1/2}(a) = 1/8$ for every $a > 0$, approached as $\lambda \to 0$ and attained at no $\lambda \ne 0$. By \eqref{eqn:lhopital},}
\begin{align*}
    {}
    \frac{1 - 2p}{4\beta_p} = \frac{1}{4}\cdot\frac{1-2p}{\beta_p} \longrightarrow \frac18
    \qquad \text{as } p \to 1/2,
\end{align*}
{so this value is the first branch of \eqref{eqn:ksA} read by the limit convention.}
\end{proof}

The branch condition of \eqref{eqn:ksnumax} compares $\bar\lambda(p)$ with $1/a$, and squaring turns it into a condition on the weight alone. For $p \in (0, 1/2)$ set $F(p) \coloneqq 2\beta_p(1-2p)$. Then
\begin{align*}
&&
    F'(p) &= -\frac{2(1-2p)}{p(1-p)} - 4\beta_p < 0, &
    \lim_{p \downarrow 0} F(p) &= +\infty, &
    \lim_{p \uparrow 1/2} F(p) &= 0, &
&\end{align*}
both terms of $F'$ being negative since $1-2p$, $p(1-p)$ and $\beta_p$ are positive on $(0, 1/2)$. By continuity and strict decrease, the equation $F(p) = 1/a^2$ has a unique root $\bar p(a) \in (0, 1/2)$ for every $a > 0$. Corollary~\ref{cor:numax} gives the maximal spread by the exact two-branch formula \eqref{eqn:ksnumax}, its branches distinguished by whether the constraint \eqref{eqn:subexp} binds at an interior frequency or at the window endpoint. In the proof below we write the branch condition $\bar\lambda(p) \le 1/a$ in the equivalent form $p \ge \bar p(a)$, and treat both branches in that form.

The two branches of \eqref{eqn:ksnumax} correspond to the location of the strongest constraint in \eqref{eqn:feas-psi}: when the interval $(0, \nu/a]$ reaches the peak $2\beta_p$ of $\psi$, the peak is the strongest constraint and the bound binds at the frequency $\bar\lambda(p)$; when the interval ends before the peak, $\psi$ is still increasing on it, the strongest constraint sits at the right edge $s = \nu/a$, and the bound binds at the window endpoint $\lambda = 1/a$. The threshold $\bar p(a)$ separates the two cases: at $p = \bar p(a)$ the two binding frequencies coincide.

\begin{proof}[Proof of Corollary~\ref{cor:numax}]
Throughout, feasibility refers to the criterion \eqref{eqn:feas-psi}.

\smallskip
\emph{The case $p = 1/2$.} Lemma~\ref{lem:psi} gives $\psi < 1/8$ on $(0,\infty)$ and $\psi(0^+) = 1/8$, so $\sup_{s>0}\psi(s) = 1/8$. If $\nu \le 2$, then $1/(2\nu^2) \ge 1/8 > \psi(s)$ for every $s > 0$, and $\nu$ is feasible. If $\nu > 2$, then $1/(2\nu^2) < 1/8$; since $\psi(s) \to 1/8$ as $s \downarrow 0$, the interval $(0, \nu/a]$ contains points with $\psi(s) > 1/(2\nu^2)$, and $\nu$ is infeasible. Hence $\nu_{\max}(1/2, a) = 2$ for every $a > 0$, which is the value of the first branch of \eqref{eqn:ksnumax} under the limit convention; since $\bar\lambda(1/2) = 0$, the point $p = 1/2$ always lies on that branch.

Now let $p < 1/2$, and introduce the unrestricted sub-Gaussian spread and its saturation tilt:
\begin{align*}
&&
    \nu_{\mathrm{KS}} &\coloneqq \sqrt{\frac{2\beta_p}{1-2p}}, &
    \psi(2\beta_p) &= \frac{1-2p}{4\beta_p} = \frac{1}{2\nu_{\mathrm{KS}}^2}, &
    \frac{2\beta_p}{\nu_{\mathrm{KS}}} &= \sqrt{2\beta_p(1-2p)} = \bar\lambda(p), &
&\end{align*}
the middle identity being \eqref{eqn:KSpeak}. Both sides being positive, squaring gives the chain of equivalences
\begin{align*}
    \bar\lambda(p) \le \frac1a
    \quad\Longleftrightarrow\quad
    F(p) \le \frac{1}{a^2} = F\left(\bar p(a)\right)
    \quad\Longleftrightarrow\quad
    p \ge \bar p(a),
\end{align*}
the last step by strict decrease of $F$.

\smallskip
\emph{The sub-Gaussian branch.} Suppose $\bar\lambda(p) \le 1/a$.

\smallskip
\emph{Feasibility.} For every $s > 0$, Lemma~\ref{lem:psi} gives
\begin{align*}
    \psi(s) \le \psi(2\beta_p) = \frac{1}{2\nu_{\mathrm{KS}}^2},
\end{align*}
so \eqref{eqn:feas-psi} holds at $\nu = \nu_{\mathrm{KS}}$, on the whole half-line and a fortiori on $(0, \nu_{\mathrm{KS}}/a]$.

\smallskip
\emph{Maximality.} Let $\nu > \nu_{\mathrm{KS}}$. The peak lies inside the constraint interval of $\nu$,
\begin{align*}
    2\beta_p = \bar\lambda(p)\,\nu_{\mathrm{KS}} < \bar\lambda(p)\,\nu \le \frac{\nu}{a},
\end{align*}
using $\nu_{\mathrm{KS}} < \nu$ in the middle step and $\bar\lambda(p) \le 1/a$ in the last, and at the point $s = 2\beta_p$,
\begin{align*}
    \psi(2\beta_p) = \frac{1}{2\nu_{\mathrm{KS}}^2} > \frac{1}{2\nu^2},
\end{align*}
so \eqref{eqn:feas-psi} fails for $\nu$. Hence $\nu_{\max}(p,a) = \nu_{\mathrm{KS}}$.

\smallskip
\emph{Saturation.} At $\nu = \nu_{\mathrm{KS}}$ and $\lambda \in (0, 1/a]$, equality in \eqref{eqn:subexp} reads $\psi(\lambda\nu_{\mathrm{KS}}) = 1/(2\nu_{\mathrm{KS}}^2) = \psi(2\beta_p)$, and strict unimodality (Lemma~\ref{lem:psi}) leaves the single solution
\begin{align*}
    \lambda\,\nu_{\mathrm{KS}} = 2\beta_p,
    \qquad\text{that is,}\qquad
    \lambda = \bar\lambda(p).
\end{align*}
For $\lambda \in [-1/a, 0)$ the constraint is strict, by the strict inequality of Lemma~\ref{lem:sign} for $p < 1/2$. The binding frequency $\bar\lambda(p)$ is interior to the window exactly when $p > \bar p(a)$.

\smallskip
\emph{The endpoint branch.} Suppose $\bar\lambda(p) > 1/a$, that is, $(1-2p)\beta_p > 1/(2a^2)$.

\smallskip
\emph{The root $m(p,a)$.} The function $g$ is continuous and strictly increasing on $[0,\infty)$, since $g' = q_s - p > 0$ for $s > 0$ by \eqref{eqn:cumulants}, and
\begin{align*}
    g(0) = 0 < \frac{1}{2a^2} < (1-2p)\beta_p = g(2\beta_p),
\end{align*}
the last identity being \eqref{eqn:MYtest} with $\nu = 1$. Since $g$ increases strictly, there is one and only one $m = m(p,a) \in (0, 2\beta_p)$ with $g(m) = 1/(2a^2)$, which is exactly equation \eqref{eqn:ksroot}. We set $\nu \coloneqq am$, so that the constraint interval is $(0, \nu/a] = (0, m]$.

\smallskip
\emph{Feasibility.} On $(0, m] \subseteq (0, 2\beta_p)$ the function $\psi$ is strictly increasing (Lemma~\ref{lem:psi}), so for $0 < s \le m$,
\begin{align*}
    \psi(s) \le \psi(m) = \frac{g(m)}{m^2} = \frac{1/(2a^2)}{\nu^2/a^2} = \frac{1}{2\nu^2},
\end{align*}
with equality if and only if $s = m$.

\smallskip
\emph{Maximality.} Let $\nu' > \nu$. Then $m < \nu'/a$, so the point $m$ lies inside the constraint interval of $\nu'$, and
\begin{align*}
    \psi(m) = \frac{1}{2\nu^2} > \frac{1}{2\nu'^2},
\end{align*}
so \eqref{eqn:feas-psi} fails for $\nu'$. Hence $\nu_{\max}(p,a) = a\,m(p,a)$.

\smallskip
\emph{Saturation.} At $\nu = am$ and $\lambda \in (0, 1/a]$, equality in \eqref{eqn:subexp} reads $\psi(\lambda\nu) = 1/(2\nu^2)$ with $\lambda\nu \in (0, m]$, and strict increase of $\psi$ on $(0, m]$ forces, in both variables,
\begin{align*}
    s = \lambda\nu = m = \frac{\nu}{a},
    \qquad\text{that is,}\qquad
    \lambda = \frac1a.
\end{align*}
For $\lambda \in [-1/a, 0)$ the constraint is again strict by Lemma~\ref{lem:sign}.
\end{proof}

\subsection*{Proof of Theorem~\ref{thm:exact}: upper bound}

Corollary~\ref{cor:numax} gives the exact form of $\nu_{\max}(p,a)$ on each branch; the next two results control the endpoint branch, and the variational identity \eqref{eqn:varformula} then converts the two branch estimates into the upper bound $c^{\SE}(1,a) \le 1 \vee a$.

\begin{lem}
\label{lem:w}
For all $p \in (0, 1/2]$,
\begin{align*}
    w(p) \coloneqq (1-p)^{1-2p}\,p^{2p} > \frac{2}{5},
    \qquad\text{while}\qquad
    e^{1/2} - \frac{e}{2} < \frac{2}{5}.
\end{align*}
\end{lem}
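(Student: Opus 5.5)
The plan is to work with $\varphi(p)\coloneqq\log w(p)=(1-2p)\log(1-p)+2p\log p$ on $(0,1/2]$. I will show that $\varphi$ is strictly convex, locate its unique minimizer $p^*$ in a short explicit interval, and bound the minimum value from below using the critical-point equation. The second claim of Lemma~\ref{lem:w} is a direct numerical check.

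\emph{Derivatives.} Differentiating,
\begin{align*}
    \varphi'(p) = -2\log(1-p) - \frac{1-2p}{1-p} + 2\log p + 2 = \frac{1}{1-p} - 2\beta_p,
    \qquad
    \varphi''(p) = \frac{1}{(1-p)^2} + \frac{2}{p(1-p)} > 0 .
\end{align*}
So $\varphi$ is strictly convex on $(0,1/2]$. At the ends, $\varphi'(0^+)=-\infty$ and $\varphi'(1/2)=2>0$. Hence $\varphi'$ has a unique zero $p^*\in(0,1/2)$, and $\varphi$ attains its global minimum on $(0,1/2]$ there.

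\emph{Eliminating the logarithm of $p$ at the minimizer.} At $p^*$ the critical-point equation reads $2\log p^* = 2\log(1-p^*) - 1/(1-p^*)$. Substituting this into $\varphi$ gives
\begin{align*}
    \varphi(p^*) = \log(1-p^*) - \frac{p^*}{1-p^*}.
\end{align*}
The right-hand side, viewed as a function $h(p)\coloneqq\log(1-p)-p/(1-p)$, is strictly decreasing in $p$. Therefore any upper bound $p^*<\bar p$ yields $\varphi\ge\varphi(p^*)>h(\bar p)$.

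\emph{Bracketing $p^*$.} Since $\varphi'$ is increasing, it suffices to check its sign at two points.
\begin{itemize}
    \item At $p=0.32$: $1/0.68\approx1.4706$ and $2\log(0.68/0.32)=2\log 2.125\approx1.5075$, so $\varphi'(0.32)<0$.
    \item At $p=0.33$: $1/0.67\approx1.4925$ and $2\log(67/33)\approx1.4165$, so $\varphi'(0.33)>0$.
\end{itemize}
Thus $p^*\in(0.32,0.33)$, and
\begin{align*}
    \varphi \ge h(0.33) = \log 0.67 - \frac{0.33}{0.67} \approx -0.4005-0.4925 = -0.893 .
\end{align*}
It follows that $w\ge e^{-0.893}\approx0.409>2/5$, with strict inequality since $p^*<0.33$ strictly.

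\emph{Main obstacle.} The only real difficulty is precision. A coarse bracket such as $(0.3,0.35)$ gives only $e^{-0.969}\approx0.38$, which falls below $2/5$, so the bracket of width $0.01$ is genuinely needed. In the written proof I would certify the two sign checks and the final value with explicit rational bounds on the logarithms, for instance via the series of $\log\frac{1+x}{1-x}$, rather than with decimals.

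\emph{Second claim.} From $e<2.72$ we get $e^{1/2}<1.65$, since $1.65^2=2.7225>2.72$. From $e>2.7$ we get $e/2>1.35$. Hence $e^{1/2}-e/2<1.65-1.35=0.3<2/5$.
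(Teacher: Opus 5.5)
Your proof is correct, but it takes a different route from the paper. The paper never locates the minimizer. It bounds the two terms of $\log w(p) = 2p\log p + (1-2p)\log(1-p)$ separately: it uses $2p\log p \ge -2/e$, and the chord bound $\log(1-p) \ge -2p\log 2$ together with $p(1-2p) \le 1/8$. This gives the closed-form constant $-2/e - (\log 2)/4 \approx -0.909$. The two terms are minimized at different points ($p = 1/e$ and $p = 1/4$), so this is lossy. Its margin against $-\log(5/2) \approx -0.916$ is only about $0.007$, which is why the paper has to certify several digits of $e$ and $\log 2$ in Lemma~\ref{lem:numeric}.

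Your joint minimization, through strict convexity of $\varphi$ and the identity $\varphi(p^*) = h(p^*)$ extracted from the critical-point equation, is nearly sharp: the true minimum of $\varphi$ is about $-0.868$. The price is locating $p^*$.

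Two remarks make your version cheaper still. First, since $h$ is decreasing, only the upper bracket $p^* < \bar p$ enters. The sign check at $0.32$ is superfluous, and the width of the bracket is irrelevant. Second, $\bar p = 1/3$ already suffices. Indeed $\varphi'(1/3) = 3/2 - 2\log 2 > 0$ because $\log 2 < 3/4$, and $h(1/3) = \log(2/3) - 1/2$ yields $w(p) > \frac{2}{3}e^{-1/2}$, which exceeds $2/5$ exactly when $e < 25/9$. This replaces all the decimal work by two one-line facts. It also avoids a rounding slip in your display: $h(0.33) = -0.89301\ldots$ lies slightly below $-0.893$, so $w \ge e^{-0.893}$ does not literally follow from $w > e^{h(0.33)}$.

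Your treatment of the second claim is essentially the same as the paper's.
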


\begin{proof}
Write
\begin{align*}
    \log w(p) = 2p\log p + (1-2p)\log(1-p).
\end{align*}

\smallskip
\emph{First term.} The map $u \mapsto u\log u$ attains its minimum $-1/e$ on $(0,1]$ at $u = 1/e$, so
\begin{align*}
    2p\log p \ge -\frac{2}{e}.
\end{align*}

\smallskip
\emph{Second term.} The map $p \mapsto \log(1-p)$ is concave, hence lies above its chord through $(0,0)$ and $(1/2, -\log 2)$ on $[0, 1/2]$:
\begin{align*}
    \log(1-p) \ge -2p\log 2.
\end{align*}
Multiplying by $1 - 2p \ge 0$ and using the identity $p(1-2p) = 1/8 - 2(p - 1/4)^2 \le 1/8$,
\begin{align*}
    (1-2p)\log(1-p) \ge -2\log 2\cdot p(1-2p) \ge -\frac{\log 2}{4}.
\end{align*}

\smallskip
\emph{Assembly.} Combining the two bounds and applying the first bound of Lemma~\ref{lem:numeric},
\begin{align*}
    \log w(p) \ge -\frac{2}{e} - \frac{\log 2}{4} > -\log\frac52,
\end{align*}
and exponentiating gives $w(p) > 2/5$. The second claim is the second bound of Lemma~\ref{lem:numeric}.
\end{proof}

\begin{prop}[Endpoint branch is dominated for $a \ge 1$]
\label{prop:Rbound}
Let $a \ge 1$ and $p \in (0, \bar p(a))$. Then
\begin{align*}
    (1-p)\,\nu_{\max}(p,a) < a\left(1 + \log\frac{1}{2p}\right),
    \qquad\text{that is}\qquad
    R(p,a) < a.
\end{align*}
\end{prop}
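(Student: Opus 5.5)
The plan is to work directly from the endpoint-branch formula of Corollary~\ref{cor:numax}. Since $a\ge 1$ and $p<\bar p(a)$, we are on the second branch, so $\nu_{\max}(p,a)=a\,m$ with $m=m(p,a)\in(0,2\beta_p)$ the root of \eqref{eqn:ksroot}. The claim $R(p,a)<a$ therefore reads $(1-p)m<1+\log\frac{1}{2p}$. Exponentiating and multiplying by $p$, this is equivalent to
\begin{align*}
    p\,e^{(1-p)m} < \frac{e}{2}.
\end{align*}
So the goal is to bound the upper-atom term of \eqref{eqn:ksroot}.

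From \eqref{eqn:ksroot} and $a\ge1$,
\begin{align*}
    p\,e^{(1-p)m} = e^{1/(2a^2)} - (1-p)\,e^{-pm} \le e^{1/2} - (1-p)\,e^{-pm}.
\end{align*}
Simply dropping the lower-atom term would give $(1-p)m<\log\frac1p+\frac12$, which is too weak since $\log 2+\frac12>1$. So the lower-atom term must be bounded below, and the key input is the constraint $m<2\beta_p$ from Corollary~\ref{cor:numax}. It gives
\begin{align*}
    e^{-pm} > e^{-2p\beta_p} = \left(\frac{p}{1-p}\right)^{2p},
\end{align*}
and hence
\begin{align*}
    (1-p)\,e^{-pm} > (1-p)^{1-2p}p^{2p} = w(p) > \frac25,
\end{align*}
the last step by Lemma~\ref{lem:w}.

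Combining the two displays yields $p\,e^{(1-p)m} < e^{1/2}-\frac25$. The second claim of Lemma~\ref{lem:w}, $e^{1/2}-\frac e2<\frac25$, is exactly $e^{1/2}-\frac25<\frac e2$. This gives $p\,e^{(1-p)m}<\frac e2$, with strict inequality, and taking logarithms recovers $(1-p)m<1+\log\frac1{2p}$. Multiplying by $a$ gives $(1-p)\nu_{\max}(p,a)<a(1+\log\frac1{2p})$, i.e.\ $R(p,a)<a$.

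The main obstacle is recognizing that the crude bound fails by the constant $\log 2+\frac12-1$, and that the slack must come from the lower atom through $m<2\beta_p$. Lemma~\ref{lem:w} is tailored precisely to this step, so once that link is seen the remaining steps are routine.
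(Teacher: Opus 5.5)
Your proof is correct and follows the paper's argument step for step: you reduce to $p\,e^{(1-p)m}<e/2$ via the root equation, bound the lower-atom term below by $w(p)>2/5$ using $m<2\beta_p$, and close with $e^{1/(2a^2)}\le e^{1/2}$ and $e^{1/2}-e/2<2/5$ from Lemma~\ref{lem:w}. One small point: the endpoint branch is selected by $p<\bar p(a)$ alone, and the hypothesis $a\ge 1$ enters only through $e^{1/(2a^2)}\le e^{1/2}$.
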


\begin{proof}
Write $m \coloneqq \nu_{\max}(p,a)/a$, so that $m \in (0, 2\beta_p)$ and \eqref{eqn:ksroot} holds (Corollary~\ref{cor:numax}).

\smallskip
\emph{Reduction to \eqref{eqn:master}.} Since $1 + \log(1/(2p)) = \log(e/(2p))$, the claim $(1-p)m < 1 + \log(1/(2p))$ unfolds through the chain of equivalences
\begin{align}
    (1-p)m < \log\frac{e}{2p}
    \quad\Longleftrightarrow\quad
    p\,e^{(1-p)m} < \frac{e}{2}
    \quad\Longleftrightarrow\quad
    e^{1/(2a^2)} - (1-p)\,e^{-pm} < \frac{e}{2},
    \label{eqn:master}
\end{align}
exponentiating and multiplying by $p > 0$ in the first step, and substituting $p\,e^{(1-p)m} = e^{1/(2a^2)} - (1-p)\,e^{-pm}$ from \eqref{eqn:ksroot} in the second.

\smallskip
\emph{Lower bound on the subtracted term.} Using $m < 2\beta_p$ in the first step, the definition $e^{-\beta_p} = p/(1-p)$ in the second, and Lemma~\ref{lem:w} in the last,
\begin{align*}
    (1-p)\,e^{-pm} > (1-p)\,e^{-2p\beta_p} = (1-p)\left(\frac{p}{1-p}\right)^{2p} = w(p) > \frac25.
\end{align*}

\smallskip
\emph{Conclusion.} Since $a \ge 1$ gives $e^{1/(2a^2)} \le e^{1/2}$,
\begin{align*}
    e^{1/(2a^2)} - (1-p)\,e^{-pm} < e^{1/2} - \frac25 < \frac{e}{2},
\end{align*}
the last step being $e^{1/2} - e/2 < 2/5$ (Lemma~\ref{lem:w}). This proves \eqref{eqn:master}.
\end{proof}

\begin{proof}[Proof of Theorem~\ref{thm:exact}, upper bound]
By \eqref{eqn:normalize} it suffices to show $c^{\SE}(1,a) \le 1 \vee a$, i.e., by Proposition~\ref{prop:var}, that $R(p,a) \le 1 \vee a$ for all $p \in (0, 1/2]$.

\smallskip
\emph{Case $a \ge 1$, sub-Gaussian branch.} For $p \ge \bar p(a)$, the sub-Gaussian branch of \eqref{eqn:ksnumax} and Lemma~\ref{lem:A} give
\begin{align*}
    (1-p)\,\nu_{\max}(p,a) = (1-p)\sqrt{\frac{2\beta_p}{1-2p}} \le 1 + \log\frac{1}{2p},
\end{align*}
so on this branch the stronger bound $R(p,a) \le 1 \le a$ holds.

\smallskip
\emph{Case $a \ge 1$, endpoint branch.} For $p < \bar p(a)$, Proposition~\ref{prop:Rbound} gives $R(p,a) < a$. Hence $c^{\SE}(1,a) \le a$.

\smallskip
\emph{Case $a \le 1$.} The monotonicity in the window scale $a$ of Lemma~\ref{lem:mono-nu}\,(iii) gives $\nu_{\max}(p,a) \le \nu_{\max}(p,1)$, so
\begin{align*}
    R(p,a) \le R(p,1) \le 1
    \qquad \text{for all } p \in (0, 1/2],
\end{align*}
by the case $a = 1$ just proved. Hence $c^{\SE}(1,a) \le 1$.
\end{proof}

\subsection*{Proof of Theorem~\ref{thm:exact}: lower bound and strictness}

The bound $c^{\SE}(1,a) \ge 1$ is the Rademacher bound \eqref{eqn:emerge}; equivalently, $R(1/2, a) = 1$ in \eqref{eqn:varformula}. The window bound $c^{\SE}(1,a) \ge a$ follows from the variational identity and the $p \downarrow 0$ asymptotics of the endpoint branch.

\begin{lem}
\label{lem:limR}
For every $a > 0$, with $K_a \coloneqq \log\left(e^{1/(2a^2)} - 1\right)$,
\begin{align*}
    (1-p)\,\nu_{\max}(p,a) = a\left(\log\frac1p + K_a + o(1)\right)
    \qquad (p \downarrow 0),
\end{align*}
and consequently $R(p,a) \to a$ as $p \downarrow 0$; in particular $c^{\SE}(1,a) \ge a$.
\end{lem}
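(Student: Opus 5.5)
For small $p$ we are on the endpoint branch of Corollary~\ref{cor:numax}, since $F(p)\to\infty$ as $p\downarrow0$ forces $p<\bar p(a)$. There $\nu_{\max}(p,a)=a\,m(p,a)$, where $m=m(p,a)\in(0,2\beta_p)$ solves \eqref{eqn:ksroot}:
\[
p\,e^{(1-p)m}+(1-p)e^{-pm}=e^{1/(2a^2)}.
\]
The plan is to show that $pm\to0$. Then $(1-p)e^{-pm}\to1$, so $p\,e^{(1-p)m}\to e^{1/(2a^2)}-1>0$. Taking logarithms gives $(1-p)m=\log(1/p)+K_a+o(1)$, and multiplying by $a$ yields the claimed expansion.

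To get $pm\to0$, the crude bound $m<2\beta_p\le2\log(1/p)$ suffices, since it gives $pm\le2p\log(1/p)\to0$. Note that $e^{1/(2a^2)}-1>0$ for every $a>0$, so $K_a$ is finite; no case split in $a$ is needed. The only delicate point is the ordering of the argument: we must bound $pm$ before isolating $p\,e^{(1-p)m}$. This is why I invoke the a priori bracket $m\in(0,2\beta_p)$ from Corollary~\ref{cor:numax} rather than trying to solve the equation directly.

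Next, by the definition \eqref{eqn:varformula},
\[
R(p,a)=\frac{(1-p)\nu_{\max}(p,a)}{1+\log\frac1{2p}}
=a\,\frac{\log(1/p)+K_a+o(1)}{\log(1/p)+1-\log2}.
\]
Both numerator and denominator are dominated by $\log(1/p)\to\infty$, so $R(p,a)\to a$.

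Finally, Proposition~\ref{prop:var} gives $c^{\SE}(1,a)=\sup_{p\in(0,1/2]}R(p,a)\ge\limsup_{p\downarrow0}R(p,a)=a$. The main obstacle is the $pm\to0$ step, which the bracket from Corollary~\ref{cor:numax} resolves; the rest is routine asymptotics.
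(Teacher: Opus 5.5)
Your proposal is correct and follows the paper's proof essentially step for step. Both arguments restrict to the endpoint branch for small $p$, use the bracket $m<2\beta_p$ from Corollary~\ref{cor:numax} to get $pm\to0$, isolate $p\,e^{(1-p)m}$ in \eqref{eqn:ksroot} and take logarithms, then divide by $1+\log(1/(2p))$ and invoke \eqref{eqn:varformula}.
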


\begin{proof}
Throughout, $a$ is fixed and $p \downarrow 0$; since $\bar p(a) > 0$, eventually $p < \bar p(a)$, and the endpoint branch of \eqref{eqn:ksnumax} applies.

\smallskip
\emph{Exact rearrangement of \eqref{eqn:ksroot}.} For $p < \bar p(a)$ write $m \coloneqq \nu_{\max}(p,a)/a$, so that $m$ solves \eqref{eqn:ksroot}; isolating the first term $p\,e^{(1-p)m}$ and taking logarithms,
\begin{align}
    (1-p)m = \log\frac{1}{p} + \log\left(e^{1/(2a^2)} - \theta_p\right),
    \qquad
    \theta_p \coloneqq (1-p)\,e^{-pm}.
    \label{eqn:mexpand}
\end{align}

\smallskip
\emph{The limit.} By Corollary~\ref{cor:numax}, $m < 2\beta_p$, so
\begin{align*}
    0 < pm < 2p\beta_p = 2p\log\frac{1-p}{p} \longrightarrow 0
    \qquad (p \downarrow 0),
\end{align*}
whence $\theta_p \to 1$, and the second term in \eqref{eqn:mexpand} tends to $K_a$; the logarithm is well defined because $\theta_p < 1 < e^{1/(2a^2)}$. Dividing \eqref{eqn:mexpand} by $1 + \log(1/(2p)) = \log(1/p) + 1 - \log 2$ gives $R(p,a) \to a$, and
\begin{align*}
    c^{\SE}(1,a) \ge \sup_{p \in (0,1/2]} R(p,a) \ge a
\end{align*}
by \eqref{eqn:varformula}.
\end{proof}

Combining the upper bound with the Rademacher bound \eqref{eqn:emerge} and Lemma~\ref{lem:limR} proves $c^{\SE}(1,a) = 1 \vee a$, hence $c^{\SE}(\sigma,\alpha) = \sigma \vee \alpha$. For $\alpha \le \sigma$ that multiple is $\sigma\Lap$, and $X \sim \mathrm{Unif}(\{-\sigma,\sigma\})$ together with $f(x) = |x|$ is an extremal pair for it: $X \in \SE(\sigma^2,\alpha)$ as shown at \eqref{eqn:emerge}, $f$ is convex and non-affine, $\E[f(\sigma\Lap)] = \sigma$ is finite, and $\E[f(X)] = \sigma = \E[f(\sigma\Lap)]$. For $\alpha > \sigma$ no extremal pair survives, and the non-attainment statement of Theorem~\ref{thm:exact} rests on a strict stop-loss comparison: every $X \in \SE(\sigma^2,\alpha)$ satisfies
\begin{align}
    S_X(t) < S_{\alpha \Lap}(t)
    \qquad \text{for all } t \in \R.
    \label{eqn:strict}
\end{align}

\begin{proof}[Proof of the strict comparison \eqref{eqn:strict}]
Normalize $\sigma = 1$ and let $a > 1$ and $X \in \SE(1,a)$.

\smallskip
\emph{Reduction to $t \ge 0$.} For $t < 0$, as in Lemma~\ref{lem:half}, the identity \eqref{eqn:flip} applied to $X$ and to $a\Lap$ gives
\begin{align*}
&&
    S_X(t) &= -t + S_{-X}(-t), &
    S_{a\Lap}(t) &= -t + S_{a\Lap}(-t), &
&\end{align*}
with $-X \in \SE(1,a)$ and $-t > 0$: strictness at $-t$ transfers to $t$. Fix $t \ge 0$.

\smallskip
\emph{Degenerate probabilities.} If $p_t = 0$ then $S_X(t) = 0 < S_{a\Lap}(t)$; and $p_t = 1$ is impossible since $\E[X] = 0$ (Lemma~\ref{lem:center}).

\smallskip
\emph{Projection.} Otherwise let $p \coloneqq p_t \in (0,1)$ and project onto $\{X > t\}$ via Lemma~\ref{lem:Y}, as in the sufficiency part of Proposition~\ref{prop:var}: by \eqref{eqn:SY},
\begin{align*}
    S_X(t) = S_Y(t) = 0 \vee (-t) \vee p\left((1-p)\nu - t\right),
\end{align*}
with $Y = Y_{p,\nu} \in \SE(1,a)$ and $\nu \le \nu_{\max}(p,a)$ (Lemma~\ref{lem:mono-nu}\,(ii)). The first two branches are strictly dominated, since $S_{a\Lap}(t) = (a/2)e^{-t/a} > 0 \ge -t$; it remains to dominate the linear branch strictly.

\smallskip
\emph{The linear branch, $p \le 1/2$.} The chain of the case $p \le 1/2$ in the proof of Proposition~\ref{prop:var} becomes strict:
\begin{align*}
    p(1-p)\nu \le p(1-p)\,\nu_{\max}(p,a) < a\,\Lambda(p),
\end{align*}
where the second inequality is strict in each case of the trichotomy:
\begin{itemize}
    \item \emph{endpoint branch}, $p < \bar p(a)$: it is Proposition~\ref{prop:Rbound};
    \item \emph{sub-Gaussian branch}, $\bar p(a) \le p < 1/2$: it is the strict case of Lemma~\ref{lem:A};
    \item \emph{the point $p = 1/2$}: it reads
    \begin{align*}
        \frac{1}{2}\cdot\frac{1}{2}\cdot 2 = \frac{1}{2} < \frac{a}{2} = a\,\Lambda\left(\frac{1}{2}\right),
\end{align*}
    strict because $a > 1$.
\end{itemize}
Hence, by the tangent bound \eqref{eqn:lb} with $b = a$,
\begin{align*}
    p\left((1-p)\nu - t\right) < a\,\Lambda(p) - pt \le S_{a\Lap}(t).
\end{align*}

\smallskip
\emph{The linear branch, $p > 1/2$.} The chain of the case $p > 1/2$ in the proof of Proposition~\ref{prop:var} gives
\begin{align*}
    p(1-p)\nu \le a\,\Lambda(1-p) \le \frac a2,
\end{align*}
the first inequality now strict by the trichotomy above applied at $p' = 1 - p \in (0, 1/2)$ (Proposition~\ref{prop:Rbound} on the endpoint branch, Lemma~\ref{lem:A} on the sub-Gaussian branch), so
\begin{align*}
    p\left((1-p)\nu - t\right) < \frac a2 - pt \le S_{a\Lap}(t),
\end{align*}
the last inequality as in the case $p > 1/2$ of Proposition~\ref{prop:var}. In all cases $S_X(t) = S_Y(t) < S_{a\Lap}(t)$.
\end{proof}

The strict comparison \eqref{eqn:strict} extends to every non-affine convex test function.

\begin{proof}[Proof of Proposition~\ref{cor:noextremal}]
Take expectations in the hinge representation \eqref{eqn:hinge}: by Tonelli's theorem, the hinge integrands being nonnegative, for any integrable centered $W$,
\begin{align*}
    \E[f(W)] = f(0) + \int_{[0,\infty)} S_W(t)\,\mu(\diff t) + \int_{(-\infty,0)} \left(S_W(t) + t\right)\mu(\diff t)
    \ \in\ (-\infty, +\infty],
\end{align*}
where the compensated hinges contribute $\E[(t - W)_+] = \E[(W - t)_+] - \E[W - t] = S_W(t) + t \ge 0$ and the slope term $s\,\E[W]$ vanishes. Apply this to $W = \alpha \Lap$ and $W = X$, both centered (Lemma~\ref{lem:center} for $X$). Since $\E[f(\alpha \Lap)] < \infty$, both integrals for $\alpha \Lap$ are finite; by \eqref{eqn:strict}, the integrands for $X$ are dominated pointwise by those for $\alpha \Lap$, so $\E[f(X)]$ is finite as well, and subtracting,
\begin{align*}
    \E[f(\alpha \Lap)] - \E[f(X)] = \int_{\R} \left(S_{\alpha \Lap}(t) - S_X(t)\right)\mu(\diff t).
\end{align*}
If $\mu = 0$ then $f$ is affine by \eqref{eqn:hinge}; as $f$ is non-affine, $\mu \ne 0$, and its support contains some $t_0$. The function $S_{\alpha \Lap} - S_X$ is continuous, both stop-loss transforms being finite and convex on $\R$, and strictly positive by \eqref{eqn:strict}; it is therefore bounded below by some $\delta > 0$ on a neighborhood $U$ of $t_0$ with $\mu(U) > 0$, and the last display is at least $\delta\,\mu(U) > 0$.
\end{proof}

\begin{remark}[Closure under independent sums]
\label{rk:sums}
{Let $X_1,\dots,X_n$ be independent and set $A \coloneqq \bigvee_i \alpha_i$. By independence $M_{\sum_i X_i} = \prod_i M_{X_i}$, so it is enough to multiply the envelopes. If $X_i \in \SE(\sigma_i^2,\alpha_i)$ for every $i$, each factor obeys \eqref{eqn:subexp} at $|\lambda| \le 1/A$, whence $\sum_i X_i \in \SE(\sum_i \sigma_i^2, A)$. If instead $X_i \in \SG(\sigma_i^2,\alpha_i)$, then at $|\lambda| < 1/A$ the $i$th summand contributes at most $\sigma_i^2\lambda^2/\left(2(1 - A|\lambda|)\right)$, since $\alpha_i \le A$, whence $\sum_i X_i \in \SG(\sum_i \sigma_i^2, A)$. Since $bX \in \SE(b^2\sigma^2, b\alpha)$ for $b > 0$, and likewise for $\SG$, the average of $n$ members of a common $\SE(\sigma^2,\alpha)$ lies in $\SE(\sigma^2/n, \alpha/n)$; with Theorem~\ref{thm:exact} this gives \eqref{eqn:avg}.}
\end{remark}

\subsection*{Optimal scales and the window obstruction}

A fixed two-point member has an explicit optimal scale, obtained by applying the necessity and the sufficiency arguments of Proposition~\ref{prop:var} to that single law. Appendix~\ref{app:sg} draws both of its strictness families from this scale.

\begin{lem}[Optimal scale of a fixed two-point member]
\label{lem:single}
For every $p \in (0, 1/2]$ and $\nu > 0$, the infimum
\begin{align*}
    \inf\left\{c > 0 : Y_{p,\nu} \lecx c\,\Lap\right\} = \frac{(1-p)\,\nu}{1 + \log\frac{1}{2p}}
\end{align*}
is attained.
\end{lem}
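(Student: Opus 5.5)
Set $c_* \coloneqq (1-p)\nu/(1+\log\frac{1}{2p})$. We show that $Y_{p,\nu}\lecx c\Lap$ holds if and only if $c \ge c_*$. The two directions repeat the necessity and sufficiency arguments of Proposition~\ref{prop:var}, now for the single law $Y \coloneqq Y_{p,\nu}$ rather than for the whole class. Both $Y$ and $c\Lap$ are centered and integrable. By Proposition~\ref{prop:hinge}, $Y\lecx c\Lap$ is therefore equivalent to $S_Y(t)\le S_{c\Lap}(t)$ for all $t\in\R$.

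\emph{Necessity.} Suppose $Y\lecx c\Lap$ and evaluate at the tangent point $t_p \coloneqq c\log(1/(2p))\ge 0$. The linear branch of \eqref{eqn:SY} and \eqref{eqn:explicit} give
\begin{align*}
p\bigl((1-p)\nu - t_p\bigr)\le S_Y(t_p)\le S_{c\Lap}(t_p)=cp .
\end{align*}
Rearranging yields $p(1-p)\nu\le c\,\Lambda(p)$, that is, $c\ge c_*$.

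\emph{Sufficiency at $c=c_*$.} For $c\ge c_*$ we have $p(1-p)\nu\le c\,\Lambda(p)$ by definition of $c_*$. The tangent bound \eqref{eqn:lb} then gives, for every $t\in\R$,
\begin{align*}
p\bigl((1-p)\nu-t\bigr)\le c\,\Lambda(p)-pt\le S_{c\Lap}(t).
\end{align*}
Lemma~\ref{lem:stoploss} gives $S_{c\Lap}(t)\ge 0\vee(-t)$. Combining these with \eqref{eqn:SY} yields $S_Y\le S_{c\Lap}$ on all of $\R$.

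This is simpler than the class-level argument in two respects. First, no reduction to $t\ge 0$ through Lemma~\ref{lem:half} is needed, because \eqref{eqn:SY} is an explicit formula valid for every real $t$. Second, the case $p>1/2$ does not arise, because $p\le 1/2$ is fixed and \eqref{eqn:lb} holds for all $t$. Hence $c_*$ is itself admissible, and the infimum is attained.

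The only point needing care is the validity of \eqref{eqn:SY} for all $t$. That formula was derived in the proof of Proposition~\ref{prop:firstbound} for a general centered two-point law with atoms $-p\nu<0<(1-p)\nu$, by splitting the real line into three ranges of $t$. It involves no membership assumption, so it applies directly to $Y_{p,\nu}$. Everything else is a direct citation of earlier results, so I expect no real obstacle.
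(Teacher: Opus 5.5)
Your proof is correct, and it is slightly more direct than the paper's. The necessity half is the same tangent test at $t_p = c\log(1/(2p))$.

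The difference is in the sufficiency half. The paper splits at $t=0$. For $t \ge 0$ it argues exactly as you do. For $t<0$ it uses the reflection identity \eqref{eqn:flip} and passes to $-Y_{p,\nu}\overset{d}{=}Y_{1-p,\nu}$, whose upper-atom weight is $1-p\ge 1/2$. It then closes with the cruder chain $\Lambda(p)\le \Lambda(1/2) = 1/2$ and $e^{-u}\ge 1-u$, copying the case $p>1/2$ of Proposition~\ref{prop:var}.

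You instead use two observations:
\begin{itemize}
\item \eqref{eqn:SY} is a property of the two-point law that holds for every real $t$. It is the maximum of the three affine pieces of a convex piecewise-linear function.
\item \eqref{eqn:lb} holds on all of $\R$, by convexity of $S_{c\Lap}$.
\end{itemize}
So the tangent line at $t_p$ dominates the linear branch everywhere, and Lemma~\ref{lem:stoploss} handles the branches $0$ and $-t$. The reflection step then disappears.

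The paper's split is inherited from the class-level argument. There the projected weight $p_t$ can exceed $1/2$, and the fallback bound $S_{c\Lap}(t)\ge c/2 - pt$ for that case needs $t\ge 0$. For a single law with $p\le 1/2$ fixed, neither issue arises, so your global route is the more economical one. The paper's version has the merit of visibly reusing the class-level template, which it draws on again in Appendix~\ref{app:sg}.
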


\begin{proof}
Write $c_* \coloneqq (1-p)\nu/(1 + \log(1/(2p)))$, so that $p(1-p)\nu = c_*\,\Lambda(p)$.

\smallskip
\emph{Necessity.} If $Y_{p,\nu} \lecx c\Lap$, the tangent test of Proposition~\ref{prop:var} uses only the domination of this single member: at $t_p = c\log(1/(2p))$, the linear branch of \eqref{eqn:SY}, Proposition~\ref{prop:hinge} and \eqref{eqn:explicit} give
\begin{align*}
    p\left((1-p)\nu - t_p\right) \le S_{Y_{p,\nu}}(t_p) \le S_{c\Lap}(t_p) = cp,
\end{align*}
whence $p(1-p)\nu \le c\,\Lambda(p)$, that is, $c \ge c_*$.

\smallskip
\emph{Sufficiency, $t \ge 0$.} By \eqref{eqn:SY} and $S_{c_*\Lap}(t) \ge 0 \vee (-t)$ (Lemma~\ref{lem:stoploss}), only the linear branch needs domination, and the tangent bound \eqref{eqn:lb} with $b = c_*$ gives
\begin{align*}
    p\left((1-p)\nu - t\right) = c_*\,\Lambda(p) - pt \le S_{c_*\Lap}(t).
\end{align*}

\smallskip
\emph{Sufficiency, $t < 0$.} By the reflection identity \eqref{eqn:flip}, it suffices to dominate $S_{-Y_{p,\nu}}$ at $s \coloneqq -t > 0$, and $-Y_{p,\nu} \overset{d}{=} Y_{p',\nu}$ has upper-atom mass $p' \coloneqq 1 - p \ge 1/2$: again only the linear branch matters, and, as in the case $p > 1/2$ of Proposition~\ref{prop:var},
\begin{align*}
    p'\left((1-p')\nu - s\right) \le p(1-p)\nu - \frac{s}{2} = c_*\,\Lambda(p) - \frac{s}{2} \le \frac{c_*}{2} - \frac{s}{2} \le \frac{c_*}{2}\,e^{-s/c_*} = S_{c_*\Lap}(s),
\end{align*}
using $p' \ge 1/2$ with $s > 0$, then $\Lambda(p) \le \Lambda(1/2) = 1/2$, and finally $e^{-u} \ge 1 - u$. By Proposition~\ref{prop:hinge}, $Y_{p,\nu} \lecx c_*\Lap$: the infimum equals $c_*$ and is attained.
\end{proof}

The variational argument proves the lower bound $c^{\SE}(1,a) \ge a$ through a sequence of bounded two-point members. There is a structural proof alongside. Convex order transports exponential moments (Lemma~\ref{lem:transport}); hence a member whose moment generating function domain is exactly the prescribed window forces a necessary condition on every convex-order benchmark (Proposition~\ref{prop:edge}).

\begin{lem}[Transport of moment generating functions]
\label{lem:transport}
If $X \lecx W$ for integrable $X, W$, then $M_X(\lambda) \le M_W(\lambda)$ for all $\lambda \in \R$.
\end{lem}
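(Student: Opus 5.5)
The plan is to apply the definition of convex order directly to the test function $f_\lambda(x) \coloneqq e^{\lambda x}$, which is convex on $\R$ for every fixed $\lambda \in \R$. Since $X \lecx W$ means $\E[f(X)] \le \E[f(W)]$ for every convex $f \colon \R \to \R$, with both sides read in $(-\infty,+\infty]$ as fixed in \eqref{eqn:cxdef}, taking $f = f_\lambda$ yields $M_X(\lambda) = \E[e^{\lambda X}] \le \E[e^{\lambda W}] = M_W(\lambda)$ immediately. No truncation is needed, because $f_\lambda$ is finite-valued and convex on the whole real line, so it is an admissible test function exactly as the definition requires.

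The only step that needs care is the reading of the inequality when one side is infinite. Since $e^{\lambda x} > 0$, both expectations are well defined in $[0,+\infty]$. If $M_W(\lambda) = +\infty$ the inequality is trivial. If $M_W(\lambda) < \infty$, the definition in the extended sense gives $M_X(\lambda) \le M_W(\lambda) < \infty$, so finiteness transfers from $W$ to $X$. This is what Proposition~\ref{prop:edge} will use.

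If one prefers to route everything through the stop-loss characterization instead, I would argue as follows. Proposition~\ref{prop:hinge} gives $\E[X] = \E[W]$ and $S_X \le S_W$. Write $f_\lambda$ through its hinge representation \eqref{eqn:hinge}, with $\mu(\diff t) = \lambda^2 e^{\lambda t}\,\diff t$, and take expectations by Tonelli, exactly as in the proof of Corollary~\ref{cor:noextremal}. Each hinge integrand $S_X(t)$, respectively $S_X(t) + t - \E[X]$ for $t<0$, is then dominated by the corresponding one for $W$, and this gives the claim in $(-\infty,+\infty]$. I do not expect any real obstacle: the direct application of the definition suffices, and the hinge route only confirms that the extended-valued convention is consistent.
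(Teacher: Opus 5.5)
Your argument is correct, and it is shorter than the paper's. You apply the definition \eqref{eqn:cxdef} directly to $x \mapsto e^{\lambda x}$. This is legitimate because the paper states convex order for every convex $f\colon\R\to\R$, with expectations read in $(-\infty,+\infty]$.

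The paper never tests the definition itself. For $\lambda>0$ it uses the pure right-hinge identity $e^{\lambda x} = \lambda^2\int_\R (x-t)_+e^{\lambda t}\,\diff t$ and integrates it against both laws by Tonelli, giving $M_X(\lambda)=\lambda^2\int_\R S_X(t)e^{\lambda t}\,\diff t$. It then compares through $S_X\le S_W$ from Proposition~\ref{prop:hinge}, and handles $\lambda<0$ via $-X\lecx -W$.

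What the paper's route buys is independence from the extended-valued convention. It uses only the stop-loss inequality, which is the form in which every domination in the paper is actually certified. It does not even need $\E[X]=\E[W]$: the exponential decays at $-\infty$, so no affine part or compensated left hinges appear. Consequently it would still work under the more restrictive convention where convex order is tested only on functions with finite expectations. Under that convention your one-line argument cannot be applied when $M_X(\lambda)=\infty$, and that is precisely the case Proposition~\ref{prop:edge} exploits.

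Your alternative hinge route supplies the missing justification and is essentially the paper's proof. It works with the compensated representation \eqref{eqn:hinge}, and therefore needs the equality of means, which the paper's uncompensated identity avoids.
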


\begin{proof}
For $\lambda > 0$ and every $x \in \R$, substituting $u = x - t$,
\begin{align*}
    \lambda^2\int_{\R} (x - t)_+\,e^{\lambda t}\,\diff t
    = \lambda^2 e^{\lambda x}\int_0^\infty u\,e^{-\lambda u}\,\diff u
    = e^{\lambda x}.
\end{align*}
Integrating this identity against the laws of $X$ and $W$, Tonelli's theorem (all integrands nonnegative) gives the two equalities below, and $S_X \le S_W$ pointwise (Proposition~\ref{prop:hinge}) gives the middle inequality:
\begin{align*}
    M_X(\lambda) = \lambda^2\int_\R S_X(t)\,e^{\lambda t}\,\diff t
    \le \lambda^2\int_\R S_W(t)\,e^{\lambda t}\,\diff t = M_W(\lambda),
\end{align*}
finite or infinite. For $\lambda < 0$ apply the above to $-X \lecx -W$, and $\lambda = 0$ is trivial.
\end{proof}

The obstruction is realized explicitly by a symmetric member whose moment generating function is finite exactly on the closed window: the window parameter is intrinsic to the class, not an artifact of the two-point optimization. In the atom weights $q_n \propto n^{-4}e^{-n/a}$ the exponential factor sets the threshold at exactly $1/a$, while the polynomial factor keeps the moment generating function finite at the threshold itself and the weighted second moments summable. The atom at zero then fits the member inside the Gaussian bound on the window.

\begin{prop}[Members with exact moment generating function domain]
\label{prop:edge}
For every $a > 0$ there exists $X \in \SE(1,a)$, symmetric, with $M_X(\lambda) = \infty$ for all $|\lambda| > 1/a$. Consequently, any integrable random variable $W$ with $X' \lecx W$ for all $X' \in \SE(1,a)$ satisfies $M_W(\lambda) = \infty$ for $|\lambda| > 1/a$; in particular $c\Lap$ requires $1/c \le 1/a$, i.e.\ $c \ge a$.
\end{prop}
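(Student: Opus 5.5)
The plan is to build $X$ explicitly from the hint in the preceding paragraph. Set $q_n \coloneqq \kappa\, n^{-4} e^{-n/a}$ for $n \ge 1$, with $\kappa>0$ small, and let $X$ put mass $q_n/2$ at each of $\pm n$ and the remaining mass $1 - \sum_n q_n$ at $0$. This law is symmetric, hence centered, and
\begin{align*}
    M_X(\lambda) = 1 - \sum_{n\ge1} q_n + \sum_{n\ge 1} q_n \cosh(\lambda n) = 1 + \kappa\sum_{n\ge1} n^{-4}e^{-n/a}\left(\cosh(\lambda n) - 1\right).
\end{align*}
For $|\lambda| > 1/a$ the summand behaves like $\tfrac12 n^{-4}e^{n(|\lambda| - 1/a)}$, which diverges, so $M_X(\lambda) = \infty$. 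For $|\lambda| \le 1/a$ the series converges, because $n^{-4}e^{-n/a}\cosh(\lambda n) \le n^{-4}$.

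Membership in $\SE(1,a)$ then reduces to showing $M_X(\lambda) \le e^{\lambda^2/2}$ on $|\lambda| \le 1/a$. Since $e^{\lambda^2/2} \ge 1 + \lambda^2/2$, it suffices to have
\begin{align*}
    \kappa\sum_{n\ge1} n^{-4}e^{-n/a}\left(\cosh(\lambda n) - 1\right) \le \frac{\lambda^2}{2}.
\end{align*}
For $|\lambda| \le 1/a$ use $\cosh y - 1 \le \tfrac{y^2}{2}\cosh y$, which follows by comparing Taylor coefficients since $(2m)! \ge 2\,(2m-2)!$. Together with $\cosh(\lambda n)e^{-n/a} \le 1$, this bounds the left side by $\kappa\,\tfrac{\lambda^2}{2}\sum_n n^{-2} = \kappa\,\tfrac{\lambda^2}{2}\,\tfrac{\pi^2}{6}$. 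Choosing $\kappa \le 6/\pi^2$ closes the bound. The same choice also makes $\sum_n q_n \le \kappa\pi^4/90 \le 1$, so the atom at zero has nonnegative mass. This is where the polynomial factor matters: it keeps the weighted second moments summable uniformly up to the boundary of the window. I expect this uniform bound at the window edge to be the only delicate point, and the elementary $\cosh$ inequality should handle it.

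For the consequence, let $W$ be integrable and dominate every member of $\SE(1,a)$ in convex order. In particular $X \lecx W$, and $X$ is integrable by Lemma~\ref{lem:center}. Lemma~\ref{lem:transport} then gives $M_W(\lambda) \ge M_X(\lambda) = \infty$ for all $|\lambda| > 1/a$. For $W = c\Lap$ we have $M_{c\Lap}(\lambda) = 1/(1 - c^2\lambda^2) < \infty$ whenever $|\lambda| < 1/c$. If $c < a$, choosing $\lambda \in (1/a, 1/c)$ gives a contradiction, so $c \ge a$.
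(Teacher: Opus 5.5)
Your proposal is correct and follows essentially the same route as the paper's proof. You use the same construction: atoms $\pm n$ with weights proportional to $n^{-4}e^{-n/a}$ and an atom at $0$, with membership from $\cosh y - 1 \le (y^2/2)\cosh y$ and $1+x\le e^x$, blow-up read off the atoms at $+n$, and the consequence from Lemma~\ref{lem:transport}. The only difference is cosmetic: you fix the scale through an explicit $\kappa \le 6/\pi^2$, using $\cosh(\lambda n)e^{-n/a}\le 1$ and $\sum_n n^{-2}=\pi^2/6$, whereas the paper normalizes the weights and chooses $\varepsilon$ through $\kappa_a=\sum_n q_n n^2\cosh(n/a)$.
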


\begin{proof}
\emph{The member.} Let $q_n \coloneqq N_a^{-1} n^{-4} e^{-n/a}$ for $n \ge 1$, with $N_a \coloneqq \sum_{n\ge1} n^{-4}e^{-n/a} < \infty$, and let $X$ take the values $\pm n$ with probabilities $\varepsilon q_n$ each and the value $0$ with probability $1 - 2\varepsilon$, where
\begin{align*}
    \varepsilon \coloneqq \frac{1}{2} \wedge \frac{1}{2\kappa_a},
    \qquad
    \kappa_a \coloneqq \sum_{n \ge 1} q_n\,n^2\cosh(n/a)
    = \frac{1}{2N_a}\sum_{n\ge1} \frac{1 + e^{-2n/a}}{n^{2}} < \infty.
\end{align*}
The variable $X$ is integrable, with $\E[|X|] = 2\varepsilon\sum_{n\ge1} n\,q_n = (2\varepsilon/N_a)\sum_{n\ge1} n^{-3}e^{-n/a} < \infty$, and symmetric, hence centered.

\smallskip
\emph{Membership.} Comparing Maclaurin coefficients, $\cosh x - 1 \le (x^2/2)\cosh x$ for all $x$, since $(2k)! \ge 2\,(2k-2)!$ for $k \ge 1$. Hence for $|\lambda| \le 1/a$, using $\cosh(\lambda n) \le \cosh(n/a)$,
\begin{align*}
    M_X(\lambda) = 1 + 2\varepsilon\sum_{n\ge1} q_n\left(\cosh(\lambda n) - 1\right)
    \le 1 + \varepsilon\lambda^2 \sum_{n\ge1} q_n n^2 \cosh(n/a)
    = 1 + \varepsilon \kappa_a\,\lambda^2 \le 1 + \frac{\lambda^2}{2} \le e^{\lambda^2/2},
\end{align*}
the second-to-last inequality by $\varepsilon\kappa_a \le 1/2$ from the choice of $\varepsilon$, and the last by $1 + x \le e^x$. So $X \in \SE(1,a)$.

\smallskip
\emph{Blow-up outside the window.} For $\lambda > 1/a$, retaining only the atoms at $+n$,
\begin{align*}
    M_X(\lambda) \ge \varepsilon N_a^{-1}\sum_{n \ge 1} n^{-4}e^{(\lambda - 1/a)n} = \infty,
\end{align*}
and symmetrically for $\lambda < -1/a$, the law of $X$ being symmetric.

\smallskip
\emph{Consequence.} Let $W$ be integrable with $X' \lecx W$ for all $X' \in \SE(1,a)$; in particular $X \lecx W$, and Lemma~\ref{lem:transport} gives $M_W(\lambda) \ge M_X(\lambda) = \infty$ for $|\lambda| > 1/a$. For the benchmark $W = c\Lap$: if $c < a$, then $1/a < 1/c$, so there is $\lambda \in (1/a, 1/c)$, and at this $\lambda$,
\begin{align*}
    M_X(\lambda) = \infty,
    \qquad
    M_{c\Lap}(\lambda) = \frac{1}{1 - c^2\lambda^2} < \infty,
\end{align*}
contradicting the transport. Hence $c \ge a$.
\end{proof}

\section{Elementary series inequalities}
\label{app:series}

Here we prove Lemmas~\ref{lem:A}, \ref{lem:C} and~\ref{lem:D} of Appendix~\ref{app:se}, together with the two numerical bounds of Lemma~\ref{lem:numeric} used in Lemma~\ref{lem:w}. Lemmas~\ref{lem:A} and~\ref{lem:B} return in Appendix~\ref{app:sg}, where they turn the quadratic spread bound of the sub-Gamma class into the bound $\sigma + 2\alpha$. Throughout, we use the substitution
\begin{align}
&&
    x &\coloneqq |1 - 2p| \in [0, 1), &
    p(1-p) &= \frac{1 - x^2}{4}, &
    |\beta_p| &= \log\frac{1 + x}{1 - x} = 2\artanh x, &
& \label{eqn:subst}
\end{align}
valid for $p \in (0,1)$; when $p \le 1/2$ one has moreover
\begin{align*}
&&
    1 - p &= \frac{1+x}{2}, &
    2p &= 1 - x, &
    \beta_p &\ge 0. &
&\end{align*}
Recall the Maclaurin series, absolutely convergent on $(-1, 1)$,
\begin{align}
&&
    \frac{\artanh x}{x} &= \sum_{j \ge 0} \frac{x^{2j}}{2j+1}, &
    -\log(1 - x) &= \sum_{n \ge 1} \frac{x^n}{n}, &
& \label{eqn:series-defs}
\end{align}
with the convention $\artanh(x)/x = 1$ at $x = 0$; the first series follows from
\begin{align*}
    \artanh x = \frac{1}{2}\log\frac{1+x}{1-x}
    = \frac{1}{2}\left(\sum_{n \ge 1}\frac{(-1)^{n-1}x^n}{n} + \sum_{n \ge 1}\frac{x^n}{n}\right)
    = \sum_{j \ge 0}\frac{x^{2j+1}}{2j+1}.
\end{align*}
The substitution \eqref{eqn:subst} gives the identity
\begin{align}
    \frac{\beta_p}{1 - 2p} = \frac{2\artanh x}{x} \qquad (p \ne 1/2),
    \label{eqn:ratio}
\end{align}
which is invariant under $p \leftrightarrow 1 - p$, both $\beta_p$ and $1-2p$ changing sign, and tends to $2$ as $p \to 1/2$.

\begin{proof}[Proof of Lemma~\ref{lem:A}]
\emph{Reduction to a coefficient comparison.} By \eqref{eqn:subst} and \eqref{eqn:ratio}, for $p \in (0, 1/2]$,
\begin{align*}
&&
    (1-p)\sqrt{\frac{2\beta_p}{1-2p}} &= \frac{1+x}{2}\cdot 2\sqrt{\frac{\artanh x}{x}} = (1+x)\sqrt{\frac{\artanh x}{x}}, &
    1 + \log\frac{1}{2p} &= 1 - \log(1-x), &
&\end{align*}
so, both sides being positive, the claim is equivalent after squaring to the power series inequality
\begin{align}
    (1 + x)^2\,\frac{\artanh x}{x} \le \left(1 - \log(1 - x)\right)^2,
    \qquad x \in [0,1).
    \label{eqn:series}
\end{align}
Write the left side as $\sum_{n \ge 0} u_n x^n$ and the right side as $\sum_{n\ge0} v_n x^n$. We prove $u_n \le v_n$ for every $n$; since all coefficients are nonnegative and both series converge absolutely on $[0,1)$, this gives \eqref{eqn:series}.

\smallskip
\emph{The left coefficients.} For the left side, by \eqref{eqn:series-defs},
\begin{align*}
    (1 + x)^2\,\frac{\artanh x}{x} = \left(1 + 2x + x^2\right)\sum_{j\ge0}\frac{x^{2j}}{2j+1},
\end{align*}
and collecting the coefficient of $x^n$,
\begin{align*}
&&
    u_0 &= 1, &
    u_1 &= 2, &
    u_{2m} &= \frac{1}{2m+1} + \frac{1}{2m-1}, &
    u_{2m+1} &= \frac{2}{2m+1} \quad (m \ge 1), &
&\end{align*}
that is,
\begin{align}
    u_n = \frac{2n}{n^2 - 1} \quad (n \text{ even},\ n \ge 2),
    \qquad
    u_n = \frac{2}{n} \quad (n \text{ odd}).
    \label{eqn:an}
\end{align}
\emph{The right coefficients.} For the right side, write $\Sigma(x) \coloneqq -\log(1-x) = \sum_{n\ge1} x^n/n$, so that
\begin{align*}
    \left(1 - \log(1-x)\right)^2 = \left(1 + \Sigma(x)\right)^2 = 1 + 2\Sigma(x) + \Sigma(x)^2,
\end{align*}
and, for $n \ge 2$, using the partial fraction decomposition $1/(j(n-j)) = (1/j + 1/(n-j))/n$,
\begin{align}
    v_n = \frac{2}{n} + \sum_{j=1}^{n-1}\frac{1}{j\,(n-j)}
    = \frac{2}{n} + \frac{1}{n}\sum_{j=1}^{n-1}\left(\frac{1}{j} + \frac{1}{n-j}\right)
    = \frac{2}{n}\left(1 + H_{n-1}\right),
    \label{eqn:cn}
\end{align}
where $H_{n-1} \coloneqq \sum_{j=1}^{n-1} 1/j$, together with $v_0 = 1$ and $v_1 = 2$.

\smallskip
\emph{Comparison.} We compare \eqref{eqn:an} and \eqref{eqn:cn}. First, $u_0 = v_0$ and $u_1 = v_1$. For odd $n \ge 3$,
\begin{align*}
    u_n = \frac{2}{n} \le \frac{2}{n}\left(1 + H_{n-1}\right) = v_n.
\end{align*}
For even $n \ge 2$,
\begin{align*}
    u_n \le v_n
    \quad\Longleftrightarrow\quad
    \frac{2n}{n^2-1} \le \frac{2}{n}\left(1 + H_{n-1}\right)
    \quad\Longleftrightarrow\quad
    \frac{n^2}{n^2 - 1} \le 1 + H_{n-1}
    \quad\Longleftrightarrow\quad
    \frac{1}{n^2 - 1} \le H_{n-1},
\end{align*}
which holds since $H_{n-1} \ge 1$ and $n^2 - 1 \ge 3$ for $n \ge 2$. This proves \eqref{eqn:series}.

\smallskip
\emph{Equality case.} Already $u_2 = 4/3 < 2 = v_2$: the strict coefficient gap at $n = 2$ makes \eqref{eqn:series} strict for every $x > 0$, so equality in Lemma~\ref{lem:A} holds if and only if $x = 0$, that is $p = 1/2$.
\end{proof}

\begin{proof}[Proof of Lemma~\ref{lem:C}]
By \eqref{eqn:subst} and \eqref{eqn:ratio},
\begin{align*}
    p(1-p)\sqrt{\frac{2\beta_p}{1-2p}}
    = \frac{1 - x^2}{4}\cdot 2\sqrt{\frac{\artanh x}{x}}
    = \frac{1 - x^2}{2}\sqrt{\frac{\artanh x}{x}},
\end{align*}
so, both sides being nonnegative, the claim is equivalent after squaring to
\begin{align*}
    (1 - x^2)^2\,\frac{\artanh x}{x} \le 1,
    \qquad x \in [0,1).
\end{align*}
By \eqref{eqn:series-defs} and the geometric series, comparing termwise via $2j + 1 \ge 1$,
\begin{align*}
    \frac{\artanh x}{x} = \sum_{j\ge0} \frac{x^{2j}}{2j+1} \le \sum_{j \ge 0} x^{2j} = \frac{1}{1 - x^2},
\end{align*}
hence
\begin{align*}
    (1 - x^2)^2\,\frac{\artanh x}{x} \le (1 - x^2)^2\cdot\frac{1}{1-x^2} = 1 - x^2 \le 1,
\end{align*}
the last inequality strict for $x > 0$: equality holds only at $x = 0$, that is $p = 1/2$.
\end{proof}

\begin{proof}[Proof of Lemma~\ref{lem:D}]
By \eqref{eqn:subst}, the claim is equivalent to
\begin{align*}
    (1 - x^2)\artanh x \le \frac{\sqrt2}{3},
    \qquad x \in [0,1).
\end{align*}
\emph{A cubic majorant.} Multiplying the series \eqref{eqn:series-defs} termwise,
\begin{align*}
    (1 - x^2)\artanh x
    &= \sum_{j \ge 0}\frac{x^{2j+1}}{2j+1} - \sum_{j \ge 0}\frac{x^{2j+3}}{2j+1}
    = x + \sum_{j \ge 1}\left(\frac{1}{2j+1} - \frac{1}{2j-1}\right)x^{2j+1}\\
    &= x - \sum_{j \ge 1}\frac{2\,x^{2j+1}}{(2j+1)(2j-1)}
    \le x - \frac{2}{3}\,x^3,
\end{align*}
where the last step drops the negative terms with $j \ge 2$.

\smallskip
\emph{Maximizing the cubic.} The map $x \mapsto x - (2/3)\,x^3$ has derivative $1 - 2x^2$, hence attains its maximum on $[0,1]$ at $x = 1/\sqrt2$, with value
\begin{align*}
    \frac{1}{\sqrt{2}} - \frac{2}{3}\cdot\frac{1}{2\sqrt{2}} = \frac{1}{\sqrt2}\left(1 - \frac{1}{3}\right) = \frac{\sqrt2}{3}.
    &\qedhere
\end{align*}
\end{proof}

The two numerical bounds used in Lemma~\ref{lem:w} are certified by truncated series and the bound $1 + x \le e^x$ alone.

\begin{lem}
\label{lem:numeric}
One has
\begin{align*}
&&
    \frac{2}{e} + \frac{\log 2}{4} &< \log\frac52, &
    e^{1/2} - \frac{e}{2} &< \frac25. &
&\end{align*}
\end{lem}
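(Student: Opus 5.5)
Both inequalities are strict numerical facts with comfortable margins: the first has slack about $0.007$ and the second about $0.11$. I will certify them with rational bounds coming from truncated series with nonnegative terms, together with $1+x\le e^x$. No tail estimates should be needed, because every truncation is used only in the direction where dropping the remaining nonnegative terms gives the inequality we want.

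\emph{First inequality.} I rewrite it as
\begin{align*}
    \frac{2}{e} < \frac34\log 2 + \log\frac54,
\end{align*}
using $\log(5/2)=\log 2+\log(5/4)$. Here is how each side is bounded.
\begin{itemize}
    \item For the left side, truncating $e=\sum_k 1/k!$ after $k=4$ gives $e\ge 65/24$. Hence $2/e\le 48/65<0.7385$.
    \item For the right side I use $\log\frac{1+y}{1-y}=2\artanh y=2\sum_{j\ge0} y^{2j+1}/(2j+1)$ from \eqref{eqn:series-defs}, whose terms are all positive for $y>0$.
    \item At $y=1/3$, keeping three terms gives $\log 2\ge 2\left(\tfrac13+\tfrac1{81}+\tfrac1{1215}\right)>0.6930$.
    \item At $y=1/9$, keeping two terms gives $\log(5/4)\ge 2\left(\tfrac19+\tfrac1{2187}\right)>0.2231$.
\end{itemize}
The right side is then at least $0.5197+0.2231>0.742$, which exceeds $0.7385$. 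To keep everything fully rigorous, I will carry these comparisons as exact rational inequalities, clearing denominators at the end, rather than as decimals.

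\emph{Second inequality.} I need an upper bound on $e^{1/2}$ and a lower bound on $e$. For the upper bound, $1+x\le e^x$ at $x=-1/8$ gives $e^{-1/8}\ge 7/8$. Raising to the fourth power gives $e^{1/2}\le(8/7)^4=4096/2401<1.706$. For the lower bound, $e\ge 65/24$ as above, so $e/2\ge 65/48>1.354$. Therefore
\begin{align*}
    e^{1/2}-\frac e2<1.706-1.354=0.352<\frac25.
\end{align*}

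The only step that needs care is the first inequality, because its margin is small. There the point is to choose series that converge fast enough: $\artanh$ at $1/3$ and $1/9$, rather than $-\log(1-x)$ at $1/2$. Once that choice is made, the rest is routine arithmetic, and the verification reduces to comparing a few explicit fractions.
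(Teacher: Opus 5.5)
Your proof is correct, and all your rational bounds check out: for instance $\frac34\cdot\frac{842}{1215}+\frac{488}{2187}=\frac{16247}{21870}>\frac{48}{65}$ in the first inequality, and $\frac{4096}{2401}-\frac{65}{48}<\frac25$ in the second.

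It differs from the paper's argument in how the constants are arranged. The paper first builds two-sided decimal enclosures, $2.7182<e<2.7183$ and $\log 2<0.6932$. Certifying these requires tail estimates for the series of $e$ (summed through $n=7$) and for $2\artanh(1/3)$. The paper then bounds $\log(5/2)$ from below indirectly: it uses $1+x\le e^x$ at $x=0.09$ to get $e^{0.91}=e/e^{0.09}<2.7183/1.09<5/2$. It bounds $e^{1/2}$ from above via $(1.6488)^2>2.7183$.

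You instead move $\frac14\log 2$ to the right-hand side and split $\log\frac52=\log 2+\log\frac54$. As a result, $\log 2$, $\log\frac54$ and $e$ are only ever needed from below. The single upper bound you need, on $e^{1/2}$, comes from $1+x\le e^x$ at $x=-1/8$. Every truncation is then one-sided, and no tail estimate appears anywhere. The cost is one extra $\artanh$ evaluation at $1/9$, and the first inequality has a thinner margin (about $0.0044$ after your bounds).

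The paper's route yields tighter two-sided enclosures of $e$ and $\log 2$ that it reuses across both inequalities. In exchange, it must certify the upper bound on $e$ through a tail estimate.
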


\begin{proof}
{From $e = \sum_{n \ge 0} 1/n!$, the partial sum through $n = 7$ is $685/252$ and the tail is at most $(1/8!)\sum_{k \ge 0} 9^{-k} = 9/(8 \cdot 8!)$; from $\log 2 = 2\artanh(1/3)$ and the first series in \eqref{eqn:series-defs} at $x = 1/3$, the partial sum through $j = 2$ is $842/1215$ and the tail is at most $(2/7)\cdot 3^{-7}/(1 - 3^{-2}) = 1/6804$. Hence}
\begin{align*}
    {}
    2.7182 < e < 2.7183,
    \qquad
    \log 2 < 0.6932 .
\end{align*}
{The first bound follows: $2/e + (\log 2)/4 < 0.7358 + 0.1733 < 0.91$, while $1 + x \le e^{x}$ at $x = 0.09$ gives $e^{0.09} \ge 1.09$, so $e^{0.91} = e/e^{0.09} < 2.7183/1.09 < 5/2$, that is, $0.91 < \log(5/2)$. For the second, $e < 2.7183 < (1.6488)^2$ gives $e^{1/2} < 1.6488$ and $e > 2.7182$ gives $e/2 > 1.3591$, so $e^{1/2} - e/2 < 0.2897 < 2/5$.}
\end{proof}

\section{Proofs for the sub-Gamma class}
\label{app:sg}

The sub-Gamma class is treated along the spine of Appendix~\ref{app:se}, and the two classes part at its last step. In the sub-exponential case the substitution $s = \lambda\nu$ separates the spread from the frequency, reducing feasibility to the comparison \eqref{eqn:feas-psi} of one curve with one level, which the Kearns--Saul test point resolves. The envelope of \eqref{eqn:subgamma} does not scale that way, and the maximal spread is left as the fixed-weight infimum of Proposition~\ref{prop:sgfixedp}. Everything upstream is reused: the hinge reduction, the projection onto two-point members, the Laplace tangent bound, and the passage from spread to constant, which Proposition~\ref{prop:sgvar} makes here. Theorem~\ref{thm:sgexact} then follows from two explicit families that push the constant strictly past $1 \vee a$, and Theorem~\ref{thm:sgbounds} from the spread bounds of Lemma~\ref{lem:sgspread}.

Throughout, $a > 0$, $p \in (0,1)$ and $\ell \coloneqq \log(1/p)$; $\beta_p$ and the tangent functional $\Lambda$ are those of Appendix~\ref{app:se}, and $d_0 = 1 - \log 2$ as in Theorem~\ref{thm:sgbounds}. With $Z_p \coloneqq Y_{p,1}$, we write $g_p \coloneqq \log M_{Z_p}$, the function $g$ of \eqref{eqn:gpsi} with its dependence on $p$ made explicit; $g_p(0) = 0$ and $g_p'(s) = q_s - p > 0$ for $s > 0$ by \eqref{eqn:cumulants}, while $g_p(s) \ge (1-p)s + \log p \to \infty$, so $g_p$ increases strictly from $0$ to $\infty$ on $[0,\infty)$ and $g_p^{-1}$ denotes the inverse of that restriction. Since $Y_{p,\nu} \overset{d}{=} \nu Z_p$, membership of $Y_{p,\nu}$ in $\SG(1,a)$ (Definition~\ref{dfn:SG}) reads
\begin{align}
    g_p(\lambda\nu) \le \frac{\lambda^2}{2(1 - a|\lambda|)}
    \qquad \text{for every } 0 < |\lambda| < \frac1a.
    \label{eqn:sgfeas}
\end{align}

\subsection*{Centering and projection}

Proposition~\ref{prop:hinge} needs the compared variables to be integrable and to share their mean, and the reduction to two-point members needs the conditional projection to stay inside the class. Neither argument of Appendix~\ref{app:se} used the shape of the envelope, so both carry over; the two lemmas below record this.

\begin{lem}[Automatic integrability and centering]
\label{lem:sgcenter}
Every $X \in \SG(\sigma^2,\alpha)$ has a two-sided exponential moment in a neighborhood of zero, is integrable, and satisfies $\E[X] = 0$.
\end{lem}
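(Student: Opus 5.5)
The argument follows the proof of Lemma~\ref{lem:center}. The only change is that the window in \eqref{eqn:subgamma} is open, $|\lambda|<1/\alpha$, so we evaluate the envelope at an interior frequency instead of at the endpoint. Throughout, fix $\lambda_0 \coloneqq 1/(2\alpha)$, which lies inside the window.

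\emph{Exponential moment and integrability.} Since $\alpha\lambda_0 = 1/2$, the bound \eqref{eqn:subgamma} gives, for both signs,
\begin{align*}
    M_X(\pm\lambda_0) \le \exp\left(\frac{\sigma^2\lambda_0^2}{2(1-1/2)}\right) = \exp\left(\frac{\sigma^2}{4\alpha^2}\right) < \infty.
\end{align*}
Hence $X$ has a two-sided exponential moment on $[-\lambda_0,\lambda_0]$. The same bound holds at every $|\lambda|\le\lambda_0$, since $\lambda\mapsto\lambda^2/(1-\alpha|\lambda|)$ is increasing in $|\lambda|$ there. For integrability, apply $\lambda_0|x| \le e^{\lambda_0|x|} \le e^{\lambda_0 x}+e^{-\lambda_0 x}$ and take expectations:
\begin{align*}
    \E[|X|] \le 4\alpha\,e^{\sigma^2/(4\alpha^2)} < \infty.
\end{align*}

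\emph{Centering.} Take any $\lambda\in(0,\lambda_0]$. Jensen's inequality applied to $x\mapsto e^{\pm\lambda x}$ gives $e^{\pm\lambda\E[X]} \le M_X(\pm\lambda)$. Combining this with \eqref{eqn:subgamma} and taking logarithms yields
\begin{align*}
    \pm\E[X] \le \frac{\sigma^2\lambda}{2(1-\alpha\lambda)}.
\end{align*}
The right side tends to $0$ as $\lambda\downarrow0$, and small $\lambda$ stay inside the window. Therefore $|\E[X]|\le 0$, that is, $\E[X]=0$.

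The only point needing care is that no estimate may be evaluated at the excluded endpoint $|\lambda|=1/\alpha$, where the envelope blows up. Working at $\lambda_0=1/(2\alpha)$ avoids this. No other step uses the shape of the envelope, beyond its finiteness on the window and its vanishing to first order at $\lambda=0$.
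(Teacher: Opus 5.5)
Your proposal is correct and follows essentially the same route as the paper's proof: finiteness of $M_X(\pm\lambda_0)$ at an interior frequency, the bound $e^{\lambda_0|x|}\le e^{\lambda_0x}+e^{-\lambda_0x}$ for integrability, then Jensen at small $\lambda$ with $\lambda\downarrow0$ on both sides. The only differences are cosmetic: you fix $\lambda_0=1/(2\alpha)$ and record an explicit bound on $\E[|X|]$, and you handle the lower sign via $M_X(-\lambda)$ rather than by passing to $-X$.
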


\begin{proof}
Fix $0 < \lambda_0 < 1/\alpha$. The defining inequality \eqref{eqn:subgamma} makes $M_X(\lambda_0)$ and $M_X(-\lambda_0)$ finite. Since
\begin{align*}
    e^{\lambda_0|x|} \le e^{\lambda_0 x} + e^{-\lambda_0 x},
\end{align*}
we have $\E[e^{\lambda_0|X|}] < \infty$, hence $\E[|X|] < \infty$. Jensen's inequality gives, for $0 < \lambda < 1/\alpha$,
\begin{align*}
    \lambda\,\E[X] \le \log M_X(\lambda) \le \frac{\sigma^2\lambda^2}{2(1 - \alpha\lambda)}.
\end{align*}
Divide by $\lambda$ and let $\lambda \downarrow 0$ to obtain $\E[X] \le 0$. Applying the same argument to $-X$ gives $\E[X] \ge 0$.
\end{proof}

The class is closed under sign flip, the window being symmetric and the envelope even in $\lambda$. From now on we normalize $\sigma = 1$, through \eqref{eqn:normalize}.

\begin{lem}[Conditional two-point projection]
\label{lem:sgproj}
Let $X \in \SG(1,a)$ and $t \ge 0$. If $p_t \coloneqq \Prob(X > t) \in (0,1)$, then there exists $\nu_t > 0$ such that the two-point member $Y_{p_t,\nu_t}$ of \eqref{eqn:Ypnu} belongs to $\SG(1,a)$ and satisfies $S_{Y_{p_t,\nu_t}}(t) = S_X(t)$.
\end{lem}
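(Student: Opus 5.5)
The plan is to copy the proof of Lemma~\ref{lem:Y} verbatim, replacing only the two places where membership in the class is used. First, Lemma~\ref{lem:sgcenter} supplies the integrability of $X$ and $\E[X]=0$. With these, set $\mathcal{E} \coloneqq \{X>t\}$ and $\mu_{\pm,t}$ as in the sub-exponential appendix. Let $Y$ have the law of $\E[X\mid\sigma(\mathcal{E})]$, namely $p_t\,\delta_{\mu_{+,t}}+(1-p_t)\,\delta_{\mu_{-,t}}$. The tower property gives $\E[Y]=0$.

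Next comes the change of variable. Since $X-t>0$ on $\mathcal{E}$ and $\Prob(\mathcal{E})=p_t>0$, we have $\mu_{+,t}>t\ge0$. Define $\nu_t \coloneqq \mu_{+,t}/(1-p_t)>0$; centering then forces $\mu_{-,t}=-p_t\nu_t$. This gives the ordering \eqref{eqn:order}, so $Y\overset{d}{=}Y_{p_t,\nu_t}$ as in \eqref{eqn:Ypnu}.

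Membership follows from the conditional Jensen inequality for $x\mapsto e^{\lambda x}$ at each fixed $\lambda$ with $|\lambda|<1/a$:
\begin{align*}
    M_Y(\lambda)\le M_X(\lambda)\le \exp\left(\frac{\lambda^2}{2(1-a|\lambda|)}\right).
\end{align*}
Both inequalities are pointwise in $\lambda$ and use nothing about the shape of the envelope. Hence $Y_{p_t,\nu_t}\in\SG(1,a)$.

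Finally, the stop-loss identity is the same computation as before:
\begin{align*}
    S_X(t)=p_t\,\E[X-t\mid\mathcal{E}]=p_t(\mu_{+,t}-t)=S_Y(t),
\end{align*}
where the last equality uses $\mu_{-,t}\le t<\mu_{+,t}$.

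I do not expect a genuine obstacle. The only point needing care is that the conditional expectations are well defined, which is exactly the integrability provided by Lemma~\ref{lem:sgcenter}. The open window $|\lambda|<1/a$, as opposed to the closed window in the sub-exponential case, plays no role, since Jensen is applied one frequency at a time.
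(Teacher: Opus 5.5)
Your proposal is correct and follows the paper's own proof: Lemma~\ref{lem:sgcenter} replaces the centering and integrability input, conditional Jensen is applied one frequency at a time inside the open window, and the change of variable $\nu_t=\mu_{+,t}/(1-p_t)$ and the stop-loss identity carry over unchanged from Lemma~\ref{lem:Y}. The conditional means are well defined because $X$ is integrable and because the hypothesis $p_t\in(0,1)$ gives both $\mathcal{E}$ and $\mathcal{E}^c$ positive probability; the bound $\mu_{-,t}\le t$ holds since $X\le t$ on $\mathcal{E}^c$.
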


\begin{proof}
Let $Y \coloneqq \E[X \mid \sigma(\{X > t\})]$. The proof of Lemma~\ref{lem:Y} invokes the class at two points, and each has a replacement here. Its centering step calls on Lemma~\ref{lem:center} for the integrability of $X$ and for $\E[X] = 0$, and Lemma~\ref{lem:sgcenter} supplies both under \eqref{eqn:subgamma}. Its membership step bounds $M_Y$ by $M_X$ through conditional Jensen, then applies the defining bound to $M_X$ at one frequency at a time. Since \eqref{eqn:subgamma} is likewise a pointwise bound on $M_X$ over its own window, the same two lines give $Y \in \SG(1,a)$. The third step, the identity $S_Y(t) = S_X(t)$, uses only the ordering of the two conditional means around $t$, and holds for any integrable $X$. The change of variable below \eqref{eqn:Y} writes $Y = Y_{p_t,\nu_t}$ for a unique $\nu_t > 0$, ordered as in \eqref{eqn:order}.
\end{proof}

The cases $p_t = 0$ and $p_t = 1$ are covered directly in the proofs below: $p_t = 0$ gives $S_X(t) = 0$, and $p_t = 1$ would force $X > t \ge 0$ almost surely, contradicting $\E[X] = 0$ (Lemma~\ref{lem:sgcenter}). As in Appendix~\ref{app:se}, the atom weight is again treated as a free variable $p \in (0,1)$ and the index is dropped.

\subsection*{Spread control}

Three frequencies each cap the spread of a two-point member of $\SG(1,a)$: the limit $\lambda \to 0$ returns the variance, an interior frequency $\theta/a$ optimized over $\theta \in (0,1)$ gives a second cap, and the Kearns--Saul test point $2\beta_p/\nu$ of \eqref{eqn:MYtest} a third. The first bounds the feasible set in Proposition~\ref{prop:sgvar}; the other two become the two upper bounds of Theorem~\ref{thm:sgbounds}.

\begin{lem}[Three spread bounds]
\label{lem:sgspread}
Let $p \in (0,1)$, $\nu > 0$, and suppose $Y_{p,\nu} \in \SG(1,a)$. Then
\begin{align}
    \nu &\le \frac{1}{\sqrt{p(1-p)}},
    \label{eqn:sgvariance}\\
    (1-p)\,\nu &\le a\ell + \sqrt{2\ell}.
    \label{eqn:sginterior}
\end{align}
For $p < 1/2$ one also has the quadratic bound
\begin{align}
    \nu \le a\beta_p + \sqrt{a^2\beta_p^2 + \frac{2\beta_p}{1 - 2p}}.
    \label{eqn:sgquad}
\end{align}
At $p = 1/2$ the last expression is interpreted by continuity, via \eqref{eqn:lhopital}, where it equals $2$; the bound holds there as well, reducing to \eqref{eqn:sgvariance}.
\end{lem}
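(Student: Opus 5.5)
The three bounds come from testing the defining inequality \eqref{eqn:sgfeas} at three kinds of frequency. In each case we use only the explicit form \eqref{eqn:Ypnu} of $M_{Y_{p,\nu}}$ and elementary algebra.

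\emph{Variance bound \eqref{eqn:sgvariance}.} I would repeat the Taylor argument of the case $p=1/2$ in Lemma~\ref{lem:nu}, with the Gaussian ceiling replaced by the sub-Gamma envelope. Set $h(\lambda)\coloneqq M_{Y_{p,\nu}}(\lambda)-\exp\bigl(\lambda^2/(2(1-a|\lambda|))\bigr)$ on $|\lambda|<1/a$. Then $h\le 0$ and $h(0)=0$. The envelope exponent is $\lambda^2/2+O(|\lambda|^3)$, so the envelope equals $1+\lambda^2/2+O(|\lambda|^3)$. Meanwhile $M_{Y}(\lambda)=1+\tfrac{\lambda^2}{2}\E[Y^2]+O(|\lambda|^3)$, using $\E[Y]=0$. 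Nonpositivity of $h$ near $0$ then forces $\E[Y^2]\le 1$. Since $\E[Y_{p,\nu}^2]=p(1-p)\nu^2$, this gives \eqref{eqn:sgvariance}. The envelope is not $C^2$ at $0$ because of $|\lambda|$, so I would argue with one-sided expansions for $\lambda\downarrow0$ rather than with $h''(0)$.

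\emph{Interior bound \eqref{eqn:sginterior}.} I would keep only the upper atom: for $0<\lambda<1/a$,
\[
p\,e^{\lambda(1-p)\nu}\le M_{Y_{p,\nu}}(\lambda)\le \exp\Bigl(\frac{\lambda^2}{2(1-a\lambda)}\Bigr).
\]
Taking logarithms gives $(1-p)\nu\le \ell/\lambda+\lambda/(2(1-a\lambda))$. The remaining step is to choose $\lambda$, and I would take $1/\lambda=a+1/\sqrt{2\ell}$, i.e. $\lambda=\sqrt{2\ell}/(a\sqrt{2\ell}+1)\in(0,1/a)$. Then $1-a\lambda=1/(a\sqrt{2\ell}+1)$, so the second term equals $\sqrt{2\ell}/2=\sqrt{\ell/2}$, and the first equals $a\ell+\sqrt{\ell/2}$. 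Their sum is exactly $a\ell+\sqrt{2\ell}$. This step only requires $\ell>0$, which holds for every $p\in(0,1)$.

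\emph{Quadratic bound \eqref{eqn:sgquad}.} For $p<1/2$ I would use the Kearns--Saul test point $\lambda_*=2\beta_p/\nu>0$, where \eqref{eqn:MYtest} gives $M_{Y}(\lambda_*)=e^{(1-2p)\beta_p}$. The argument splits into two cases, as in Lemma~\ref{lem:nu}.

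\begin{itemize}
\item If $\lambda_*\ge 1/a$, then $\nu\le 2a\beta_p$, which is at most the right side of \eqref{eqn:sgquad} because the square root exceeds $a\beta_p$.
\item If $\lambda_*<1/a$, the envelope gives $(1-2p)\beta_p\bigl(1-2a\beta_p/\nu\bigr)\le 2\beta_p^2/\nu^2$. Multiplying by $\nu^2/\bigl((1-2p)\beta_p\bigr)>0$ yields $\nu^2-2a\beta_p\nu-2\beta_p/(1-2p)\le 0$. Hence $\nu$ lies below the positive root of this quadratic, which is \eqref{eqn:sgquad}.
\end{itemize}

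At $p=1/2$, \eqref{eqn:lhopital} shows the expression tends to $\sqrt{4}=2$, and the first bound gives $\nu\le 1/\sqrt{1/4}=2$.

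The main obstacle is finding the explicit $\lambda$ that makes the interior bound come out exactly as $a\ell+\sqrt{2\ell}$. The other steps are direct transcriptions of Lemma~\ref{lem:nu}; the only other point needing care is the one-sided Taylor argument at the nonsmooth origin in the variance bound.
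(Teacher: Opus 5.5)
Your proposal is correct and follows the paper's proof essentially step for step. The variance bound comes from the small-$\lambda$ expansion: the paper takes the limit of $2\log M_{Y_{p,\nu}}(\lambda)/\lambda^2$, while you compare the expansions of $M$ and of the envelope. The interior bound keeps the upper atom at the frequency with $a\lambda/(1-a\lambda)=a\sqrt{2\ell}$, which is exactly the stationary point the paper substitutes, and the quadratic bound uses the same two-case Kearns--Saul argument; your explicit choice of $\lambda$ simply skips the derivative computation, and your check that it yields $a\ell+\sqrt{2\ell}$ is right.
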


\begin{proof}
\emph{Variance.} Let $K(\lambda) \coloneqq \log M_{Y_{p,\nu}}(\lambda)$. The boundedness of $Y_{p,\nu}$ gives
\begin{align*}
    \lim_{\lambda \to 0} \frac{2K(\lambda)}{\lambda^2} = \E\left[Y_{p,\nu}^2\right] = p(1-p)\nu^2,
\end{align*}
while the corresponding limit of $\left(1 - a|\lambda|\right)^{-1}$ is one. Divide the defining inequality \eqref{eqn:subgamma} by $\lambda^2/2$ and let $\lambda \to 0$ to obtain $p(1-p)\nu^2 \le 1$, which is \eqref{eqn:sgvariance}.

\smallskip
\emph{Optimized interior frequency.} Fix $0 < \theta < 1$, retain the upper atom of \eqref{eqn:Ypnu} at $\lambda = \theta/a$, and apply \eqref{eqn:subgamma}:
\begin{align*}
    -\ell + \frac{\theta}{a}(1-p)\nu
    \le \log M_{Y_{p,\nu}}\left(\frac{\theta}{a}\right)
    \le \frac{\theta^2}{2a^2(1-\theta)}.
\end{align*}
Therefore
\begin{align}
    (1-p)\,\nu \le \frac{\theta}{2a(1-\theta)} + \frac{a\ell}{\theta}.
    \label{eqn:sgthetaraw}
\end{align}
The derivative of the right side in $\theta$ vanishes exactly when
\begin{align*}
    \frac{\theta}{1-\theta} = a\sqrt{2\ell},
\end{align*}
and substitution of this stationary point in \eqref{eqn:sgthetaraw} gives \eqref{eqn:sginterior}.

\smallskip
\emph{Kearns--Saul frequency.} For $p < 1/2$ set $\lambda_* \coloneqq 2\beta_p/\nu$, the test point of \eqref{eqn:MYtest}, which gives
\begin{align*}
    \log M_{Y_{p,\nu}}(\lambda_*) = (1 - 2p)\,\beta_p.
\end{align*}
If $\lambda_* \ge 1/a$, then $\nu \le 2a\beta_p$, which is strictly smaller than the right side of \eqref{eqn:sgquad}. If $\lambda_* < 1/a$, the test point lies strictly inside the open window, and \eqref{eqn:subgamma} at $\lambda_*$ reads
\begin{align*}
    (1 - 2p)\,\beta_p \le \frac{2\beta_p^2/\nu^2}{1 - 2a\beta_p/\nu}.
\end{align*}
Multiplication by the positive denominator and cancellation of $\beta_p > 0$ give
\begin{align*}
    \nu^2 - 2a\beta_p\,\nu - \frac{2\beta_p}{1 - 2p} \le 0,
\end{align*}
and solving the quadratic proves \eqref{eqn:sgquad}. The equality case $\lambda_* = 1/a$ belongs to the first alternative, the sub-Gamma window being open, so that no constraint may be tested at its endpoint.
\end{proof}

\subsection*{The variational identity}

As in Proposition~\ref{prop:var}, we read the constant off the largest spread a two-point member can carry at a given atom weight. Within the normalized class, the maximal spread of the two-point family \eqref{eqn:Ypnu} is
\begin{align}
    \nu^{\SG}_{\max}(p,a) \coloneqq \sup\left\{\nu > 0 :\ Y_{p,\nu} \in \SG(1,a)\right\}
    \qquad (p \in (0,1)).
    \label{eqn:sgnumax}
\end{align}

\begin{prop}[Variational identity for the sub-Gamma class]
\label{prop:sgvar}
Fix $a > 0$. For every $p \in (0,1)$ the feasible spreads $\{\nu > 0 : Y_{p,\nu} \in \SG(1,a)\}$ form the interval $(0, \nu^{\SG}_{\max}(p,a)]$: the supremum in \eqref{eqn:sgnumax} is finite and attained; moreover $\nu^{\SG}_{\max}(p,a) = \nu^{\SG}_{\max}(1-p,a)$. For $c > 0$, every $X \in \SG(1,a)$ satisfies $X \lecx c\Lap$ if and only if
\begin{align}
    (1-p)\,\nu^{\SG}_{\max}(p,a) \le c\left(1 + \log\frac{1}{2p}\right)
    \qquad \text{for every } p \in (0, 1/2].
    \label{eqn:sgvarcond}
\end{align}
Consequently
\begin{align}
    c^{\SG}(1,a) = \sup_{p \in (0,1/2]} R^{\SG}(p,a),
    \qquad
    R^{\SG}(p,a) \coloneqq \frac{(1-p)\,\nu^{\SG}_{\max}(p,a)}{1 + \log\frac{1}{2p}},
    \label{eqn:sgvarformula}
\end{align}
and the infimum in \eqref{eqn:defcSG} is a minimum: $c^{\SG}(1,a)$ is itself admissible.
\end{prop}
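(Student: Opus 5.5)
The plan is to follow the proof of Proposition~\ref{prop:var} step by step, replacing each use of the sub-exponential class by its sub-Gamma counterpart.

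\emph{Structure of the feasible set.} For fixed $p$, Lemma~\ref{lem:mono-nu}\,(i) shows that $\nu \mapsto M_{Y_{p,\nu}}(\lambda)$ is strictly increasing for every $\lambda \ne 0$. Hence feasibility in \eqref{eqn:sgfeas} is downward closed in $\nu$. The variance bound \eqref{eqn:sgvariance} of Lemma~\ref{lem:sgspread} makes the feasible set bounded, so $\nu^{\SG}_{\max}(p,a) < \infty$. It is nonempty: Hoeffding's lemma gives $M_{Y_{p,2}}(\lambda) \le e^{\lambda^2/2} \le \exp\bigl(\lambda^2/(2(1-a|\lambda|))\bigr)$, so $\nu = 2$ is feasible.

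Closedness is the one point needing care, because the window is open. Fix $\lambda$ with $|\lambda| < 1/a$. The constraint at $\lambda$ is a nonstrict inequality that is continuous in $\nu$, so it defines a closed set of $\nu$. The feasible set is the intersection of these closed sets over all such $\lambda$, and is therefore closed. A nonempty, bounded, closed, downward-closed subset of $(0,\infty)$ is $(0,\nu^{\SG}_{\max}]$, so the supremum is attained. For the reflection identity, $-Y_{p,\nu} \overset{d}{=} Y_{1-p,\nu}$, and the class is closed under sign flip because the envelope is even in $\lambda$.

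\emph{Necessity.} Take the extremal member $Y_{p,\nu^{\SG}_{\max}}$, which lies in the class by attainment. Apply the tangent test at $t_p = c\log(1/(2p))$ exactly as in Proposition~\ref{prop:var}: the linear branch of \eqref{eqn:SY}, together with $S_{c\Lap}(t_p) = cp$, gives \eqref{eqn:sgvarcond}. Alternatively, this follows directly from Lemma~\ref{lem:single}.

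\emph{Sufficiency.} Take $X \in \SG(1,a)$. Lemma~\ref{lem:sgcenter} gives integrability and centering. Reduce to $t \ge 0$ by the argument of Lemma~\ref{lem:half}, which uses only centering and closure under sign flip. Dispose of $p_t \in \{0,1\}$ as in the text. Then project via Lemma~\ref{lem:sgproj} to $Y_{p,\nu}$ with $\nu \le \nu^{\SG}_{\max}(p,a)$, and dominate the linear branch in two cases:
\begin{itemize}
\item for $p \le 1/2$, use \eqref{eqn:sgvarcond} and the tangent bound \eqref{eqn:lb};
\item for $p > 1/2$, use reflection, the monotonicity $\Lambda(1-p) \le 1/2$, and $e^{-u} \ge 1-u$, verbatim as in Proposition~\ref{prop:var}.
\end{itemize}
Proposition~\ref{prop:hinge} then gives $X \lecx c\Lap$.

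\emph{Formula and attainment.} Dividing \eqref{eqn:sgvarcond} by $\Lambda(p)$ shows that the set of admissible $c$ is $[\sup_p R^{\SG}(p,a), \infty)$. This set is closed, so the infimum is a minimum. It remains to check that the supremum is finite. By \eqref{eqn:sginterior}, $R^{\SG}(p,a) \le (a\ell + \sqrt{2\ell})/(\ell + d_0)$, which is bounded over $\ell \ge \log 2$.

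The main obstacle is the closedness and finiteness step, since the open window removes the endpoint test used in Lemma~\ref{lem:mono-nu}. Writing the feasible set as an intersection of closed sets, and taking boundedness from the variance bound, resolves this.
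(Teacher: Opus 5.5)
Your proposal is correct and follows the paper's proof essentially step for step: the interval structure from Lemma~\ref{lem:mono-nu}\,(i), boundedness from \eqref{eqn:sgvariance} and frequency-by-frequency closedness; necessity by the tangent test; sufficiency through Lemmas~\ref{lem:sgcenter} and~\ref{lem:sgproj} together with the reflection case; and finiteness of the supremum from \eqref{eqn:sginterior}. The differences are cosmetic. You get nonemptiness from Hoeffding's lemma directly, where the paper uses $\SE(1,a)\subseteq\SG(1,a)$, which rests on the same fact. Also, \eqref{eqn:sgvarcond} is already divided by $p$, so in the last step you divide by $1+\log(1/(2p))$ rather than by $\Lambda(p)$.
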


\begin{proof}[Proof of Proposition~\ref{prop:sgvar}]
\emph{Interval structure.} By Lemma~\ref{lem:mono-nu}\,(i), for every $\lambda \ne 0$ the map $\nu \mapsto M_{Y_{p,\nu}}(\lambda)$ is strictly increasing; hence feasibility in \eqref{eqn:sgfeas} is downward closed in $\nu$. The set is nonempty: every $\nu \le 2$ is feasible, since $Y_{p,\nu} \in \SE(1,a)$ by Lemma~\ref{lem:mono-nu}\,(ii) and $\SE(1,a) \subseteq \SG(1,a)$ (Section~\ref{sec:setting}). It is bounded: \eqref{eqn:sgvariance} gives $\nu \le \left(p(1-p)\right)^{-1/2}$. It is closed: if feasible spreads $\nu_n$ converge to $\nu$, then at every fixed $\lambda$ with $0 < |\lambda| < 1/a$ the map $\nu \mapsto M_{Y_{p,\nu}}(\lambda)$ is continuous, and the nonstrict inequality \eqref{eqn:sgfeas} passes to the limit frequency by frequency. Being nonempty, bounded, downward closed and closed, the feasible set is the interval $(0, \nu^{\SG}_{\max}(p,a)]$, and the supremum in \eqref{eqn:sgnumax} is finite and attained. Reflection follows from $-Y_{p,\nu} \overset{d}{=} Y_{1-p,\nu}$ and closure of the class under sign flip.

\smallskip
\emph{Necessity.} Necessity uses the class once only, to know that the maximal spread is attained and so places the extremal member inside the class; the interval structure just established gives us that. So fix $p \in (0, 1/2]$ and put $\nu \coloneqq \nu^{\SG}_{\max}(p,a)$, feasible by attainment. Everything else in the tangent test is benchmark-side or class-free: Proposition~\ref{prop:hinge} applied to the centered member $Y_{p,\nu}$, the linear branch of \eqref{eqn:SY}, and the values \eqref{eqn:explicit} of $S_{c\Lap}$ at $t_p \coloneqq c\log(1/(2p))$. It yields $p(1-p)\nu \le c\,\Lambda(p)$, which is \eqref{eqn:sgvarcond} after division by $p$.

\smallskip
\emph{Sufficiency.} The sufficiency part of the proof of Proposition~\ref{prop:var} invokes membership in $\SE(1,a)$ at five points, and each has a replacement here.
\begin{itemize}
    \item[(i)] Lemma~\ref{lem:half} reduces to thresholds $t \ge 0$, drawing on closure of the class under sign flip and on the centering of Lemma~\ref{lem:center}. We recorded closure after Lemma~\ref{lem:sgcenter}, which also gives the centering.
    \item[(ii)] Twice more $\E[X] = 0$ is wanted, to dispose of $p_t = 1$ and to pass from the stop-loss comparison to $X \lecx c\Lap$ through Proposition~\ref{prop:hinge}. Lemma~\ref{lem:sgcenter} supplies it.
    \item[(iii)] A two-point member has to be placed in the class at the same stop-loss value. That is the projection of Lemma~\ref{lem:Y}, and under \eqref{eqn:subgamma} it is Lemma~\ref{lem:sgproj}.
    \item[(iv)] Lemma~\ref{lem:mono-nu}\,(ii) bounds $\nu \le \nu_{\max}(p,a)$, and for $p > 1/2$ Lemma~\ref{lem:mono-nu}\,(iii) supplies a reflection. Standing in for both are the interval structure and the reflection established above.
    \item[(v)] Finally \eqref{eqn:varcond} is applied, at $p$ when $p \le 1/2$ and at $1-p$ otherwise; \eqref{eqn:sgvarcond}, multiplied by $p$, takes its place.
\end{itemize}
Nothing else in the argument touches the class; the remaining steps are benchmark-side or elementary, namely the stop-loss formula \eqref{eqn:SY} for a two-point law, Lemma~\ref{lem:stoploss}, the tangent bound \eqref{eqn:lb}, monotonicity of $\Lambda$ on $(0, 1/2]$, and $e^{-u} \ge 1 - u$. With the five replacements in place, we obtain $X \lecx c\Lap$ for every $X \in \SG(1,a)$.

\smallskip
\emph{The formula.} Dividing \eqref{eqn:sgvarcond} by $1 + \log(1/(2p)) > 0$ shows that the admissible constants are exactly $c \ge \sup_p R^{\SG}(p,a)$, which is \eqref{eqn:sgvarformula}. That supremum is finite: applying \eqref{eqn:sginterior} to the feasible spread $\nu^{\SG}_{\max}(p,a)$ and writing $1 + \log(1/(2p)) = \ell + d_0$ with $\ell = \log(1/p) \ge \log 2$ and $d_0 > 0$,
\begin{align*}
    R^{\SG}(p,a) \le \frac{a\ell + \sqrt{2\ell}}{\ell + d_0} \le a + \sqrt{\frac{2}{\log 2}}
    \qquad \text{for every } p \in (0, 1/2].
\end{align*}
The admissible constants therefore fill a nonempty closed half-line, $[\sup_p R^{\SG}(p,a), \infty)$; taking $c$ to be its left endpoint proves admissibility.
\end{proof}

\subsection*{The exact fixed-weight problem}

Fix the atom weight. The envelope can then be inverted frequency by frequency, which turns the maximal spread into an infimum over the window, and Proposition~\ref{prop:sgvar} converts that infimum into a formula for the comparison constant.

\begin{prop}[Exact fixed-weight formula]
\label{prop:sgfixedp}
For $a > 0$ and $p \in (0, 1/2]$,
\begin{align}
    \nu^{\SG}_{\max}(p,a)
    = \inf_{0 < \lambda < 1/a} \frac{1}{\lambda}\,g_p^{-1}\!\left(\frac{\lambda^2}{2(1 - a\lambda)}\right)
    = a \inf_{0 < \theta < 1} \frac{1}{\theta}\,g_p^{-1}\!\left(\frac{\theta^2}{2a^2(1-\theta)}\right).
    \label{eqn:sgfixedp}
\end{align}
Consequently,
\begin{align}
    c^{\SG}(1,a)
    = \sup_{0 < p \le 1/2} \frac{a(1-p)}{1 + \log\frac{1}{2p}}
    \inf_{0 < \theta < 1} \frac{1}{\theta}\,g_p^{-1}\!\left(\frac{\theta^2}{2a^2(1-\theta)}\right).
    \label{eqn:sgnested}
\end{align}
\end{prop}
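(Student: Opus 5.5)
The plan is to invert the membership condition \eqref{eqn:sgfeas} frequency by frequency at the fixed weight $p\in(0,1/2]$. The first step is to reduce the two-sided window to its positive half. By Lemma~\ref{lem:sign}, $M_{Y_{p,\nu}}(-\lambda)\le M_{Y_{p,\nu}}(\lambda)$ for $\lambda>0$ and $p\le 1/2$, and the envelope $\lambda^2/(2(1-a|\lambda|))$ is even. So $Y_{p,\nu}\in\SG(1,a)$ if and only if $g_p(\lambda\nu)\le \lambda^2/(2(1-a\lambda))$ for every $\lambda\in(0,1/a)$.

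Fix such a $\lambda$. Since $g_p$ is a strictly increasing bijection of $[0,\infty)$ onto itself and $\lambda\nu>0$, the pointwise constraint is equivalent to
\begin{align*}
    \nu \le \frac1\lambda\, g_p^{-1}\!\left(\frac{\lambda^2}{2(1-a\lambda)}\right) \eqqcolon \phi_p(\lambda).
\end{align*}
Hence the feasible set $\{\nu>0: Y_{p,\nu}\in\SG(1,a)\}$ equals $\bigcap_{0<\lambda<1/a}(0,\phi_p(\lambda)]$, which is $(0,\inf_\lambda \phi_p(\lambda)]$. Each $\phi_p(\lambda)$ is positive and finite, and the infimum is positive: Proposition~\ref{prop:sgvar} already shows the feasible set contains $(0,2]$, so $\inf_\lambda\phi_p(\lambda)\ge 2$. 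By definition \eqref{eqn:sgnumax}, the supremum of this set is $\nu^{\SG}_{\max}(p,a)=\inf_{0<\lambda<1/a}\phi_p(\lambda)$, which is the first equality in \eqref{eqn:sgfixedp}. This also recovers the attainment stated in Proposition~\ref{prop:sgvar}. The second equality is the substitution $\lambda=\theta/a$, $\theta\in(0,1)$: then $1/\lambda=a/\theta$ and $\lambda^2/(2(1-a\lambda))=\theta^2/(2a^2(1-\theta))$.

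For \eqref{eqn:sgnested}, I would insert \eqref{eqn:sgfixedp} into the variational formula \eqref{eqn:sgvarformula} of Proposition~\ref{prop:sgvar}, $c^{\SG}(1,a)=\sup_{p\in(0,1/2]}(1-p)\nu^{\SG}_{\max}(p,a)/(1+\log\frac1{2p})$, and pull out the factor $a$. That yields the nested sup–inf expression exactly.

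The only point needing care is the half-window reduction and the direction of the inequalities under inversion. Lemma~\ref{lem:sign} is stated for general $\nu>0$ and $\lambda>0$, so it applies verbatim. The inversion is legitimate because the right side $\lambda^2/(2(1-a\lambda))$ lies in $(0,\infty)$, the range of $g_p$ on $(0,\infty)$. No continuity or compactness argument is required: an intersection of intervals of the form $(0,b_\lambda]$ is always $(0,\inf_\lambda b_\lambda]$, and its supremum is $\inf_\lambda b_\lambda$ whenever that infimum is positive, which is guaranteed here by $\inf_\lambda\phi_p(\lambda)\ge 2$. I therefore expect no genuine obstacle; the step most worth checking is the case $p=1/2$, where Lemma~\ref{lem:sign} gives equality between $\pm\lambda$. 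There the reduction still holds trivially.
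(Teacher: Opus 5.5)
Your proposal is correct and follows the paper's proof essentially step by step. Both reduce to the positive half-window via Lemma~\ref{lem:sign} and the evenness of the envelope, invert the strictly increasing $g_p$ at each frequency, take the infimum, substitute $\theta=a\lambda$, and insert the result into \eqref{eqn:sgvarformula}. Your added remark that $\bigcap_{\lambda}(0,\phi_p(\lambda)]=(0,\inf_\lambda\phi_p(\lambda)]$, with that infimum at least $2$, only spells out what the paper leaves implicit.
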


\begin{proof}
By Lemma~\ref{lem:sign} and evenness of the envelope, the constraint \eqref{eqn:sgfeas} at $-\lambda$, $\lambda > 0$, is implied by the constraint at $\lambda$, so feasibility of $\nu$ is equivalent to
\begin{align*}
    g_p(\lambda\nu) \le \frac{\lambda^2}{2(1 - a\lambda)}
    \qquad (0 < \lambda < 1/a).
\end{align*}
Strict increase of $g_p$ makes this equivalent, at each fixed $\lambda$, to $\nu \le g_p^{-1}\left(\lambda^2/\left(2(1-a\lambda)\right)\right)/\lambda$. Imposing this at every frequency gives the infimum in \eqref{eqn:sgfixedp}; the substitution $\theta = a\lambda$ gives the second expression, and Proposition~\ref{prop:sgvar} gives \eqref{eqn:sgnested}.
\end{proof}

\begin{remark}[Evaluation of the constants]
\label{rk:sgnumeval}
The values of $c^{\SG}(1,a)$ quoted after Theorem~\ref{thm:sgbounds} are obtained from \eqref{eqn:sgnested} by inverting $g_p$ at each frequency, minimizing the resulting ratio over $\theta \in (0,1)$, and maximizing over the atom weight in the coordinate $\ell = \log(1/p)$ rather than in $p$ itself. At $a = 1$, $2$, $5$ and $10$ the maximum is attained at weights of order $10^{-3}$, $10^{-7}$, $10^{-30}$ and $10^{-113}$.
\end{remark}

\subsection*{Interior enlargement and binding geometry}

Suppose a two-point member has variance below one and slack at every nonzero frequency. Then its spread can be enlarged strictly. So at the maximal spread one of two things must hold: the variance constraint is tight, or the envelope is touched at an interior frequency. That dichotomy is what locates the frequency at which \eqref{eqn:sgfeas} binds. We shall use the enlargement step twice more, in Proposition~\ref{prop:sga} and, with the atom weight perturbed in place of the spread, in Lemma~\ref{lem:sgcontinuity}.

\needspace{8\baselineskip}
\begin{lem}[Interior enlargement]
\label{lem:sgenlarge}
Let $Y$ be bounded and centered. Suppose $\E[Y^2] < 1$ and
\begin{align}
    \log M_Y(\lambda) < \frac{\lambda^2}{2(1 - a|\lambda|)}
    \qquad (0 < |\lambda| < 1/a).
    \label{eqn:sgslack}
\end{align}
Then $rY \in \SG(1,a)$ for some $r > 1$.
\end{lem}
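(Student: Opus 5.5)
Set $K(\lambda) \coloneqq \log M_Y(\lambda)$ and $E(\lambda) \coloneqq \lambda^2/(2(1-a|\lambda|))$. For $r>1$ the scaled member has $\log M_{rY}(\lambda) = K(r\lambda)$. We must therefore find $r>1$ with $K(r\lambda) \le E(\lambda)$ for all $0<|\lambda|<1/a$; at $\lambda=0$ there is nothing to check. Since $Y$ is bounded, $K$ is smooth and finite on all of $\R$. It satisfies $K(0)=K'(0)=0$ and $K''(0)=\E[Y^2]<1$. We split the window into three regions: a neighbourhood of $0$, a compact middle part, and a neighbourhood of the endpoints $\pm 1/a$. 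On each region we produce some $r_i>1$, and then take the minimum of the three.

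\emph{Near zero.} Let $v \coloneqq \E[Y^2]<1$ and fix $v<v'<1$. Because $K''$ is continuous with $K''(0)=v$, there is $\delta>0$ with $K''\le v'$ on $[-2\delta,2\delta]$. By Taylor's theorem, $K(u)\le v'u^2/2$ for $|u|\le 2\delta$. Take $r\le\min(2,\sqrt{1/v'})$ and $|\lambda|\le\delta$. Then $|r\lambda|\le 2\delta$, so $K(r\lambda)\le v' r^2\lambda^2/2\le \lambda^2/2\le E(\lambda)$. This step is the only place where the variance hypothesis is used. It is needed because the strict slack \eqref{eqn:sgslack} degenerates at $\lambda=0$, where both sides vanish to second order.

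\emph{Near the endpoints.} As $|\lambda|\uparrow 1/a$ we have $E(\lambda)\to\infty$. Meanwhile $K(r\lambda)\le r|\lambda|\,\|Y\|_\infty\le 2\|Y\|_\infty/a$ for $r\le 2$, since $K(u)\le |u|\,\|Y\|_\infty$. Hence there is $\eta>0$ such that $K(r\lambda)\le E(\lambda)$ whenever $1/a-\eta\le|\lambda|<1/a$, uniformly in $r\in[1,2]$.

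\emph{Compact middle.} On $C\coloneqq\{\delta\le|\lambda|\le 1/a-\eta\}$ the function $E-K$ is continuous and strictly positive by \eqref{eqn:sgslack}. Since $C$ is compact, $E-K\ge\varepsilon>0$ there. The map $(r,\lambda)\mapsto K(r\lambda)$ is uniformly continuous on $[1,2]\times C$. So for $r$ close enough to $1$ we get $|K(r\lambda)-K(\lambda)|<\varepsilon$ on $C$, and hence $K(r\lambda)<E(\lambda)$ there.

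Taking $r>1$ no larger than each of the three thresholds gives $rY\in\SG(1,a)$. The main obstacle is the behaviour at $\lambda\to0$: pointwise slack alone does not survive scaling there, and the second-order comparison via $\E[Y^2]<1$ is exactly what repairs it. The endpoint region is harmless because the envelope blows up while $K(r\lambda)$ stays bounded. The middle region is a routine compactness argument.
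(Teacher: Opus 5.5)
Your proof is correct and follows the paper's argument essentially step for step: the same three-region split, namely a Taylor bound near zero driven by $\E[Y^2]<1$, boundedness of $K(r\lambda)$ against the diverging envelope near $\pm 1/a$, and a compactness and uniform-continuity argument on the middle region. The only cosmetic difference is near zero. You bound $K''$ in the dilated variable $u=r\lambda$ and absorb the factor $r^2$ through $r\le 1/\sqrt{v'}$, whereas the paper uses joint continuity of $(r,\lambda)\mapsto K_r''(\lambda)$ at $(1,0)$.
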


\begin{proof}
Dilating $Y$ by $r$ only rescales the frequency in its cumulant generating function, and the window splits into three regions on which the slack survives for different reasons. Write $K_r(\lambda) \coloneqq \log \E[e^{\lambda rY}] = K_1(r\lambda)$, finite and smooth for all $(r,\lambda)$ since $Y$ is bounded.

\smallskip
\emph{Near zero.} Choose $\eta$ with $\E[Y^2] < \eta < 1$. The map $(r,\lambda) \mapsto K_r''(\lambda)$ is continuous, and $K_1''(0) = \E[Y^2] < \eta$, so there are $\lambda_0 \in (0, 1/a)$ and $r_1 > 1$ with $K_r''(\lambda) \le \eta$ for $|\lambda| \le \lambda_0$ and $1 \le r \le r_1$. Since $K_r(0) = K_r'(0) = 0$, Taylor's theorem gives $K_r(\lambda) \le \eta\lambda^2/2 < \lambda^2/2 \le \lambda^2/\left(2(1 - a|\lambda|)\right)$ for $0 < |\lambda| \le \lambda_0$.

\smallskip
\emph{Near the endpoints.} $|K_r(\lambda)| \le r|\lambda|\sup|Y| \le (r_1/a)\sup|Y|$ uniformly for $|\lambda| < 1/a$ and $r \le r_1$, while the envelope diverges as $|\lambda| \uparrow 1/a$: there is $\varepsilon > 0$ such that the envelope exceeds $(r_1/a)\sup|Y|$ for $(1-\varepsilon)/a \le |\lambda| < 1/a$.

\smallskip
\emph{The middle.} On the compact set $\lambda_0 \le |\lambda| \le (1-\varepsilon)/a$, the strict inequality \eqref{eqn:sgslack} for $K_1$ has a positive minimum gap, and $K_r \to K_1$ uniformly there as $r \downarrow 1$, so the gap persists for $r > 1$ close to one.

Choosing $r > 1$ below all thresholds makes $rY$ feasible on the whole window.
\end{proof}

\begin{lem}[Binding dichotomy]
\label{lem:sgbinding}
Fix $p \in (0, 1/2]$ and $a > 0$, and set $\nu_* \coloneqq \nu^{\SG}_{\max}(p,a)$. Exactly one of the following holds.
\begin{itemize}
    \item[(i)] \emph{(Variance branch.)} $\nu_* = 1/\sqrt{p(1-p)}$, so that $\E[Y_{p,\nu_*}^2] = 1$ and \eqref{eqn:sgfeas} is asymptotically tight as $\lambda \to 0$.
    \item[(ii)] \emph{(Interior branch.)} $\nu_* < 1/\sqrt{p(1-p)}$, and there exists $\lambda_\sharp \in (0, 1/a)$ with
    \begin{align}
        g_p(\lambda_\sharp\nu_*) &= \frac{\lambda_\sharp^2}{2(1 - a\lambda_\sharp)},
        \label{eqn:sgtangeq}\\
        \nu_*\,g_p'(\lambda_\sharp\nu_*) &= \frac{\lambda_\sharp(2 - a\lambda_\sharp)}{2(1 - a\lambda_\sharp)^2}.
        \label{eqn:sgtangder}
    \end{align}
\end{itemize}
The window endpoint $\lambda = 1/a$ never binds, the envelope diverging there. At $p = 1/2$ the variance branch occurs for every $a > 0$, with $\nu^{\SG}_{\max}(1/2,a) = 2$.
\end{lem}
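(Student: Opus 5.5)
The plan is to argue from the maximality of $\nu_*$ via Lemma~\ref{lem:sgenlarge}. Proposition~\ref{prop:sgvar} says $\nu_*$ is attained, so $Y_* \coloneqq Y_{p,\nu_*} \in \SG(1,a)$, and \eqref{eqn:sgvariance} gives $\E[Y_*^2] = p(1-p)\nu_*^2 \le 1$. The two cases $\nu_* = 1/\sqrt{p(1-p)}$ and $\nu_* < 1/\sqrt{p(1-p)}$ are mutually exclusive by definition. Case (i) is then immediate: $\E[Y_*^2]=1$, and the ratio $2\log M_{Y_*}(\lambda)/\lambda^2 \to \E[Y_*^2] = 1$ matches the limit of the envelope ratio as $\lambda \to 0$.

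\emph{Interior branch.} Suppose $\nu_* < 1/\sqrt{p(1-p)}$, so $\E[Y_*^2] < 1$. If \eqref{eqn:sgslack} held strictly on the whole punctured window, Lemma~\ref{lem:sgenlarge} would give $rY_* = Y_{p,r\nu_*} \in \SG(1,a)$ for some $r>1$. This contradicts maximality of $\nu_*$. Hence equality holds at some $0<|\lambda|<1/a$. By Lemma~\ref{lem:sign} (with evenness of the envelope), a contact at negative $\lambda$ forces one at $-\lambda>0$, since $\log M_{Y_*}(-\lambda)\le \log M_{Y_*}(\lambda)\le$ envelope. So we get $\lambda_\sharp \in (0,1/a)$ satisfying \eqref{eqn:sgtangeq}. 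Next set $h(\lambda) \coloneqq \lambda^2/(2(1-a\lambda)) - g_p(\lambda\nu_*)$. This function is smooth on $(0,1/a)$, nonnegative by feasibility, and vanishes at $\lambda_\sharp$, so $\lambda_\sharp$ is an interior minimum and $h'(\lambda_\sharp)=0$. Differentiating, $\frac{d}{d\lambda}\frac{\lambda^2}{2(1-a\lambda)} = \frac{\lambda(2-a\lambda)}{2(1-a\lambda)^2}$, and this gives \eqref{eqn:sgtangder}.

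\emph{Endpoint, and $p=1/2$.} The endpoint never binds because $g_p(\lambda\nu_*) \le g_p(\nu_*/a) < \infty$ while the envelope tends to $+\infty$ as $\lambda\uparrow 1/a$. Any contact is therefore strictly interior, which is already built into the argument above. At $p=1/2$ we have $1/\sqrt{p(1-p)}=2$. Then $\nu_*\ge 2$ by the Hoeffding inclusion $\SE(1,a)\subseteq\SG(1,a)$ (Lemma~\ref{lem:mono-nu}(ii)), and $\nu_*\le 2$ by \eqref{eqn:sgvariance}. So $\nu_*=2$ and the variance branch occurs.

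The main obstacle is the contradiction step in the interior branch: it needs strict slack at every interior frequency, including both signs, in order to invoke Lemma~\ref{lem:sgenlarge}. This is handled by noting that failure of strict slack anywhere already produces a contact point, which Lemma~\ref{lem:sign} relocates to positive frequency.
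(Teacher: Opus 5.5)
Your proposal is correct and follows essentially the same route as the paper: attainment plus \eqref{eqn:sgvariance} gives $\nu_*\le 1/\sqrt{p(1-p)}$; in the slack-variance case, Lemma~\ref{lem:sgenlarge} and maximality force a contact point, which Lemma~\ref{lem:sign} moves to a positive frequency; divergence of the envelope makes that point interior, and the first-order condition gives \eqref{eqn:sgtangder}. The case $p=1/2$ follows from feasibility of $\nu=2$ via $\SE(1,a)\subseteq\SG(1,a)$ together with \eqref{eqn:sgvariance}, exactly as in the paper.
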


\begin{proof}
By \eqref{eqn:sgvariance}, $\nu_* \le 1/\sqrt{p(1-p)}$. Suppose $\nu_* < 1/\sqrt{p(1-p)}$, so that $\E[Y_{p,\nu_*}^2] = p(1-p)\nu_*^2 < 1$. If the inequality in \eqref{eqn:sgfeas} were strict at every $0 < |\lambda| < 1/a$, Lemma~\ref{lem:sgenlarge} would give $r > 1$ with $rY_{p,\nu_*} \overset{d}{=} Y_{p, r\nu_*} \in \SG(1,a)$, contradicting maximality. Hence
\begin{align*}
    \Delta(\lambda) \coloneqq g_p(\lambda\nu_*) - \frac{\lambda^2}{2(1 - a\lambda)}
\end{align*}
vanishes at some $\lambda_\sharp \in (0, 1/a)$, the negative frequencies being controlled by Lemma~\ref{lem:sign}; the point is interior to the window because $g_p(\lambda\nu_*)$ stays bounded as $\lambda \uparrow 1/a$ while the envelope diverges. Since $\Delta \le 0$ on $(0, 1/a)$ and $\Delta(\lambda_\sharp) = 0$ at an interior point, $\Delta'(\lambda_\sharp) = 0$; writing out $\Delta(\lambda_\sharp) = 0$ and $\Delta'(\lambda_\sharp) = 0$ gives \eqref{eqn:sgtangeq} and \eqref{eqn:sgtangder}. In case (i) the variance computation in the proof of Lemma~\ref{lem:sgspread} shows that \eqref{eqn:sgfeas} is approached as $\lambda \to 0$. At $p = 1/2$, feasibility of $\nu = 2$ holds by Lemma~\ref{lem:mono-nu}\,(ii) and the inclusion $\SE(1,a) \subseteq \SG(1,a)$, and \eqref{eqn:sgvariance} gives $\nu_* \le 2$; hence $\nu^{\SG}_{\max}(1/2,a) = 2 = 1/\sqrt{p(1-p)}$.
\end{proof}

\subsection*{Strict lower bounds}

The inclusion $\SE(1,a) \subseteq \SG(1,a)$ and Theorem~\ref{thm:exact} give only $c^{\SG}(1,a) \ge 1 \vee a$. Strictness comes from two explicit families, through the optimal Laplace scale of a fixed two-point member computed in Lemma~\ref{lem:single}: any member of the class with optimal scale $\rho$ forces $c^{\SG}(1,a) \ge \rho$.

\needspace{7\baselineskip}
\begin{prop}[Strictness above the variance scale]
\label{prop:sgone}
For every $a > 0$, $c^{\SG}(1,a) > 1$.
\end{prop}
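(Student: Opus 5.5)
We plan to use the variational identity of Proposition~\ref{prop:sgvar} and exhibit one atom weight $p<1/2$ with $R^{\SG}(p,a)>1$. Since $R^{\SG}(1/2,a)=1$ (Lemma~\ref{lem:sgbinding}, variance branch at $p=1/2$), we will perturb the symmetric member slightly. Write $p=(1-x)/2$ with small $x>0$, as in \eqref{eqn:subst}. The candidate spread is the variance-saturating one, $\nu_x\coloneqq 1/\sqrt{p(1-p)}=2/\sqrt{1-x^2}$, so that $\E[Y_{p,\nu_x}^2]=1$. Since $(1-p)/\sqrt{p(1-p)}=\sqrt{(1+x)/(1-x)}=e^{\artanh x}$ and $1+\log(1/(2p))=1-\log(1-x)$, the ratio at this spread is
\begin{align*}
    \frac{e^{\artanh x}}{1-\log(1-x)} = \frac{1+x+\tfrac{x^2}{2}+\tfrac{x^3}{2}+O(x^4)}{1+x+\tfrac{x^2}{2}+\tfrac{x^3}{3}+O(x^4)} = 1+\frac{x^3}{6}+O(x^4).
\end{align*}
This ratio exceeds $1$ for small $x>0$, so it suffices to show that $Y_{p,\nu_x}\in\SG(1,a)$ for all sufficiently small $x$. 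Then $\nu^{\SG}_{\max}(p,a)\ge\nu_x$ and $c^{\SG}(1,a)\ge R^{\SG}(p,a)>1$. Equivalently, Lemma~\ref{lem:single} directly gives the optimal Laplace scale of this member as the displayed ratio.

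Feasibility is the main step. It cannot come from the global Kearns--Saul bound: the unrestricted sub-Gaussian coefficient of $Y_{p,\nu_x}$ is $x/\bigl(2(1-x^2)\artanh x\bigr)>1/2$. So the slack factor $(1-a|\lambda|)^{-1}$ must be used. By Lemma~\ref{lem:sign} only $0<\lambda<1/a$ matters. Set $K_x(\lambda)\coloneqq\log M_{Y_{p,\nu_x}}(\lambda)$ and compare it with the envelope $E(\lambda)=\lambda^2/(2(1-a\lambda))$ on three regions, as in Lemma~\ref{lem:sgenlarge}.

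\emph{Near zero.} Here $K_x(0)=K_x'(0)=0$ and $K_x''(0)=1$. The third cumulant is $p(1-p)(1-2p)\nu_x^3=2x/\sqrt{1-x^2}$, which is $O(x)$, while $E$ has third derivative $3a$ at $0$. A third-order Taylor bound with uniformly bounded fourth derivative will give $K_x(\lambda)\le \lambda^2/2+(x/3+C\lambda)\lambda^3$. Meanwhile $E(\lambda)\ge\lambda^2/2+a\lambda^3/2$. Hence $K_x<E$ on some $(0,\lambda_0]$, uniformly in small $x$.

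\emph{Middle and endpoint regions.} On the compact range $[\lambda_0,(1-\varepsilon)/a]$ we have $K_0(\lambda)=\log\cosh\lambda<\lambda^2/2<E(\lambda)$ with a positive gap, and $K_x\to K_0$ uniformly, so the gap persists for small $x$. Near $\lambda=1/a$, $K_x$ is bounded by $\lambda\nu_x\le 3/a$ while $E$ diverges.

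The delicate point is the near-zero region. Both sides agree to second order there, so positivity of $a$ in the envelope's cubic term is exactly what absorbs the $O(x)$ skewness of the perturbed two-point law. We will therefore check carefully that the Taylor constants are uniform in $x\in[0,x_0]$.
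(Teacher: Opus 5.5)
Your proposal is correct and is essentially the paper's proof. It uses the same variance-one member $Y_{p,\nu}$ with $p=(1-x)/2$ and $\nu=2/\sqrt{1-x^2}$, the same optimal scale $\sqrt{(1+x)/(1-x)}/(1-\log(1-x))>1$ from Lemma~\ref{lem:single}, and the same mechanism: the cubic term of $\lambda^2/(2(1-a\lambda))\ge\frac{\lambda^2}{2}(1+a\lambda)$ absorbs the $O(x)$ third cumulant.

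The one difference is in how membership is checked. Where you use a three-region compactness argument, the paper uses a single global bound: $K_x'''(\lambda)\le K_x'''(0)=2x/\sqrt{1-x^2}$ for all $\lambda\ge0$ once $x\le1/\sqrt3$, because $r\mapsto r(1-r)(1-2r)$ decreases on $[(3-\sqrt3)/6,1/2]$ and is nonpositive beyond. This gives $K_x(\lambda)\le\frac{\lambda^2}{2}+\frac{x\lambda^3}{3\sqrt{1-x^2}}$ on the whole window and an explicit range of admissible $x$. A small slip: your cubic Taylor coefficient should be $x/(3\sqrt{1-x^2})$, not $x/3$, which is harmless.
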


\begin{proof}
The family is the two-point laws of variance exactly one; a bound on the third derivative of the cumulant generating function puts the mildly asymmetric ones inside the class, and their optimal Laplace scale already exceeds one. For $x \in (0,1)$ let $p = (1-x)/2$, so that $1 - p = (1+x)/2$, and define the variance-one Bernoulli law
\begin{align*}
    W_x \sim p\,\delta_{\sqrt{(1-p)/p}} + (1-p)\,\delta_{-\sqrt{p/(1-p)}},
\end{align*}
that is, $W_x \overset{d}{=} Y_{p,\nu}$ with upper-atom mass $p < 1/2$ and spread $\nu = 1/\sqrt{p(1-p)}$, so that $\E[W_x^2] = p(1-p)\nu^2 = 1$. Let $K_x(\lambda) \coloneqq \log M_{W_x}(\lambda) = g_p(\lambda\nu)$. Under exponential tilting, if $r_\lambda$ is the tilted upper-atom mass and $d = 1/\sqrt{p(1-p)}$ is the range, then by the cumulant identities \eqref{eqn:cumulants} applied at $s = \lambda d$,
\begin{align*}
    K_x'''(\lambda) = d^3\,r_\lambda(1 - r_\lambda)(1 - 2r_\lambda).
\end{align*}
The function $r \mapsto r(1-r)(1-2r) = 2r^3 - 3r^2 + r$ is decreasing on $[(3-\sqrt3)/6,\ 1/2]$, its derivative $6r^2 - 6r + 1$ being nonpositive between its roots $(3 \pm \sqrt3)/6$, and it is nonpositive on $[1/2, 1)$, the factor $1 - 2r$ being nonpositive there; the map $\lambda \mapsto r_\lambda$ is increasing with $r_0 = p$. If $x \le 1/\sqrt3$, then $p = (1-x)/2 \ge (3-\sqrt3)/6$, and for every $\lambda \ge 0$,
\begin{align*}
    K_x'''(\lambda) \le K_x'''(0) = d^3\,p(1-p)(1-2p) = \frac{2x}{\sqrt{1-x^2}}.
\end{align*}
The exact integral identity
\begin{align*}
    K_x(\lambda) = \frac{\lambda^2}{2} + \frac{1}{2}\int_0^\lambda (\lambda - t)^2\,K_x'''(t)\,\diff t,
\end{align*}
valid since $K_x(0) = K_x'(0) = 0$ and $K_x''(0) = 1$, therefore gives
\begin{align*}
    K_x(\lambda) \le \frac{\lambda^2}{2} + \frac{x\,\lambda^3}{3\sqrt{1-x^2}}
    \qquad (\lambda \ge 0).
\end{align*}
If $2x/\sqrt{1-x^2} \le 3a$, then for $0 < \lambda < 1/a$,
\begin{align*}
    K_x(\lambda) \le \frac{\lambda^2}{2}\left(1 + a\lambda\right) \le \frac{\lambda^2}{2(1 - a\lambda)},
\end{align*}
using $(1 + a\lambda)(1 - a\lambda) \le 1$ in the last step. Lemma~\ref{lem:sign} controls negative frequencies. Thus $W_x \in \SG(1,a)$ whenever
\begin{align}
    0 < x \le \frac{1}{\sqrt3} \wedge \frac{3a}{\sqrt{4 + 9a^2}}.
    \label{eqn:sgxrange}
\end{align}

By Lemma~\ref{lem:single}, the optimal Laplace scale of $W_x$ is
\begin{align*}
    \rho(x) = \frac{(1-p)\nu}{1 + \log\frac{1}{2p}}
    = \frac{\sqrt{(1+x)/(1-x)}}{1 - \log(1-x)}.
\end{align*}
It is strictly greater than one for $x > 0$. Indeed, if $A(x) = \sqrt{(1+x)/(1-x)}$ and $B(x) = 1 - \log(1-x)$, then $A(0) = B(0) = 1$ and
\begin{align*}
    A'(x) = \frac{A(x)}{1 - x^2} > \frac{1}{1-x} = B'(x),
\end{align*}
because $A(x) > 1 + x$ for $x \in (0,1)$, squaring both sides. Choosing any $x > 0$ in \eqref{eqn:sgxrange} proves the claim.
\end{proof}

\begin{prop}[Strictness above the window scale]
\label{prop:sga}
For every $a > 0$, $c^{\SG}(1,a) > a$.
\end{prop}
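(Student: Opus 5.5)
The plan is to exhibit one explicit two-point member of $\SG(1,a)$ whose optimal Laplace scale, computed by Lemma~\ref{lem:single}, is strictly larger than $a$. The members are asymmetric two-point laws of small atom weight, as in the sub-exponential window obstruction (Lemma~\ref{lem:limR}). For $\SE$ the scale $a$ is only approached because the constraint saturates at the closed endpoint $\lambda=1/a$. In $\SG$ the window is open and the envelope diverges at $1/a$, so the same family should be able to carry a spread larger by an additive constant in the logarithmic scale.

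\emph{The member.} Fix $K>d_0=1-\log 2$; say $K=1$. For $p\in(0,1/2)$ set $\ell=\log(1/p)$ and define $\nu$ by $(1-p)\nu=a(\ell+K)$. By Lemma~\ref{lem:single}, the optimal scale of $Y_{p,\nu}$ is
\begin{align*}
\rho=\frac{(1-p)\nu}{1+\log\frac{1}{2p}}=\frac{a(\ell+K)}{\ell+d_0}>a .
\end{align*}
So it suffices to show $Y_{p,\nu}\in\SG(1,a)$ for some sufficiently small $p$; then $c^{\SG}(1,a)\ge\rho>a$. By Lemma~\ref{lem:sign} and evenness of the envelope, only $\lambda\in(0,1/a)$ needs checking.

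\emph{Feasibility.} I would use the elementary bound
\begin{align*}
\log M_{Y_{p,\nu}}(\lambda)=-ps+\log\bigl(1+p(e^{s}-1)\bigr)\le p\,(e^{s}-1-s),\qquad s=\lambda\nu,
\end{align*}
and substitute $\theta=a\lambda\in(0,1)$. Then $s=\theta\nu/a\le 2\theta(\ell+K)$, and in fact $s=\theta(\ell+K)/(1-p)$. The target inequality is $p(e^{s}-1-s)\le\theta^2/(2a^2(1-\theta))$, which I would check in two regimes.

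For $\theta\le 1/2$, use $e^s-1-s\le s^2e^s/2$. This gives a left side of at most $\theta^2\,C(\ell+K)^2e^{-\ell+(\ell+K)/(2(1-p))}$, which is $\theta^2(\ell+K)^2e^{-\ell/2+O(1)}$. That is below $\theta^2/(2a^2)$ once $\ell$ is large.

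For $\theta=1-u$ with $u\in(0,1/2]$, bound the left side by $p\,e^{s}$. Its exponent is $-\ell+(1-u)(\ell+K)/(1-p)$, which is at most $K-u\ell+o(1)$ because $p(\ell+K)\to0$. The right side is at least $1/(8a^2u)$. Hence it suffices that $8a^2e^{K+o(1)}\,u\,e^{-u\ell}\le1$. Since $u e^{-u\ell}\le 1/(e\ell)$, this holds for all large $\ell$.

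\emph{Main obstacle.} The delicate point is the transition regime $\theta\to1$. There the centering term is irrelevant, but the extra $+K$ in the spread is exactly what must be absorbed. This works only because the factor $(1-\theta)^{-1}$ makes the envelope blow up, while $u e^{-u\ell}$ stays of order $1/\ell$ uniformly in $u$. I expect to have to track the $1/(1-p)$ correction in $s$ with care, via $p\ell\to0$, so that the exponent really is $K-u\ell+o(1)$ uniformly in $u$. Once feasibility is verified for one small $p$, Lemma~\ref{lem:single} and the definition~\eqref{eqn:defcSG} conclude the proof.
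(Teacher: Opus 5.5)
Your proof is correct and is essentially the paper's. It uses the same small-weight family $Y_{p,\nu}$ with $(1-p)\nu=a(\ell+\text{const})$, the same split of the window at $\theta=1/2$, and the same elementary bounds ($e^s-1-s\le s^2e^s/2$ at small frequencies; near the endpoint, a bare $p\,e^{s}$ bound combined with $\sup_u u e^{-u\ell}=1/(e\ell)$), and you handle the $1/(1-p)$ correction correctly via $p\ell\to0$. The one difference is that the paper takes the constant equal to $d_0$, so the scale is exactly $a$; it must then check strict feasibility and $\E[Y^2]<1$ and invoke the interior enlargement Lemma~\ref{lem:sgenlarge} to get past $a$, whereas your choice $K>d_0$ builds the slack in from the start and makes that lemma unnecessary.
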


\begin{proof}
Let $p \in (0, 1/2)$, so that $\ell + d_0 = 1 + \log(1/(2p))$, and set
\begin{align}
    \nu_0 \coloneqq \frac{a\left(\ell + d_0\right)}{1-p}.
    \label{eqn:sgnu0}
\end{align}
By Lemma~\ref{lem:single}, the optimal Laplace scale of $Y_{p,\nu_0}$ equals $(1-p)\nu_0/(\ell + d_0) = a$. We show that this member is strictly feasible when $p$ is small; Lemma~\ref{lem:sgenlarge} then produces a member with scale strictly above $a$.

At $\lambda = \theta/a$, $0 < \theta < 1$, the moment generating function \eqref{eqn:Ypnu} reads
\begin{align*}
    M_{Y_{p,\nu_0}}\left(\frac{\theta}{a}\right) = p\,e^{\theta(\ell + d_0)} + (1-p)\,e^{-\theta p(\ell + d_0)/(1-p)}.
\end{align*}

\smallskip
\emph{Small frequencies, $0 < \theta \le 1/2$.} Write $u = \theta(\ell + d_0)$ and $v = \theta p(\ell + d_0)/(1-p)$, so that $pu = (1-p)v$: the linear terms cancel, and
\begin{align*}
    M_{Y_{p,\nu_0}}\left(\frac{\theta}{a}\right)
    = 1 + p\left(e^u - 1 - u\right) + (1-p)\left(e^{-v} - 1 + v\right).
\end{align*}
The bounds $e^u - 1 - u \le u^2e^u/2$, $e^{-v} - 1 + v \le v^2/2$ and $\log z \le z - 1$ give
\begin{align}
    \log M_{Y_{p,\nu_0}}\left(\frac{\theta}{a}\right)
    \le \frac{\theta^2\left(\ell + d_0\right)^2}{2}\left(p\,e^{\theta(\ell + d_0)} + \frac{p^2}{1-p}\right)
    \le \frac{\theta^2\left(\ell + d_0\right)^2}{2}\left(e^{-\ell/2 + d_0/2} + \frac{p^2}{1-p}\right),
    \label{eqn:sgsmalltheta}
\end{align}
the last step because $\theta \le 1/2$ gives $p\,e^{\theta(\ell + d_0)} \le p\,e^{(\ell + d_0)/2} = e^{-\ell/2 + d_0/2}$.

\smallskip
\emph{Large frequencies, $1/2 \le \theta < 1$.} Put $\delta = 1 - \theta$. Using $\log z \le z - 1$ together with the bound $(1-p)\,e^{-\theta p(\ell + d_0)/(1-p)} \le 1-p$,
\begin{align}
    \log M_{Y_{p,\nu_0}}\left(\frac{\theta}{a}\right)
    \le p\,e^{\theta(\ell + d_0)} = e^{\theta d_0 - \delta\ell} \le e^{d_0}\,e^{-\delta\ell}.
    \label{eqn:sglargetheta}
\end{align}
Since $\theta \ge 1/2$, the envelope satisfies $\theta^2/\left(2a^2(1-\theta)\right) \ge 1/(8a^2\delta)$, and since $\sup_{\delta > 0}\delta e^{-\delta\ell} = 1/(e\ell)$, the right side of \eqref{eqn:sglargetheta} is strictly below the envelope once
\begin{align*}
    \ell > 8a^2e^{d_0 - 1}.
\end{align*}
The right side of \eqref{eqn:sgsmalltheta} is strictly below the envelope $\theta^2/\left(2a^2(1-\theta)\right) \ge \theta^2/(2a^2)$ once
\begin{align*}
    \left(\ell + d_0\right)^2\left(e^{-\ell/2 + d_0/2} + \frac{p^2}{1-p}\right) < \frac{1}{a^2}.
\end{align*}
Finally,
\begin{align*}
    \E\left[Y_{p,\nu_0}^2\right] = p(1-p)\,\nu_0^2 = \frac{p\,a^2\left(\ell + d_0\right)^2}{1-p} < 1
\end{align*}
for all sufficiently small $p$. As $p \downarrow 0$ one has $\ell \to \infty$, $(\ell + d_0)^2e^{-\ell/2 + d_0/2} \to 0$, $(\ell + d_0)^2p^2/(1-p) \to 0$ and $pa^2(\ell + d_0)^2/(1-p) \to 0$, so the three displayed conditions hold simultaneously for all small $p$; negative frequencies are controlled by Lemma~\ref{lem:sign}. Lemma~\ref{lem:sgenlarge} then gives $r > 1$ with $Y_{p, r\nu_0} \in \SG(1,a)$, whose optimal Laplace scale is $ra > a$ by Lemma~\ref{lem:single}; hence $c^{\SG}(1,a) \ge ra > a$.
\end{proof}

\subsection*{Continuity, attainment, and the proof of Theorem~\ref{thm:sgexact}}

The ratio $R^{\SG}(\cdot,a)$ of \eqref{eqn:sgvarformula} is continuous on $(0,1/2)$ and tends to $a$ at the left end of that interval and to $1$ at the right. Both limits lie strictly below the supremum, by Propositions~\ref{prop:sgone} and~\ref{prop:sga}, so the supremum is attained at an interior weight.

\begin{lem}[Continuity and endpoint limits]
\label{lem:sgcontinuity}
For fixed $a > 0$, the map $p \mapsto \nu^{\SG}_{\max}(p,a)$ is continuous on $(0, 1/2)$, and
\begin{align}
    \lim_{p \uparrow 1/2} R^{\SG}(p,a) = 1,
    \qquad
    \lim_{p \downarrow 0} R^{\SG}(p,a) = a.
    \label{eqn:sgRlimits}
\end{align}
\end{lem}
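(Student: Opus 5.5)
The plan has three parts: continuity of $\nu^{\SG}_{\max}(\cdot,a)$ on $(0,1/2)$ via upper and lower semicontinuity, and the two endpoint limits in \eqref{eqn:sgRlimits}, each by squeezing $R^{\SG}$ between bounds that share the same limit.

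\emph{Continuity.} Fix $p_0\in(0,1/2)$ and write $\nu_0\coloneqq\nu^{\SG}_{\max}(p_0,a)$.

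For \emph{upper semicontinuity}, take $p_n\to p_0$ and set $\nu_n\coloneqq\nu^{\SG}_{\max}(p_n,a)$. By \eqref{eqn:sgvariance} the $\nu_n$ are bounded, so we may pass to a convergent subsequence $\nu_n\to\bar\nu$. At every fixed $\lambda$ with $0<|\lambda|<1/a$, the map $(p,\nu)\mapsto g_p(\lambda\nu)$ is jointly continuous. Hence \eqref{eqn:sgfeas} passes to the limit frequency by frequency, $Y_{p_0,\bar\nu}$ is feasible, and $\bar\nu\le\nu_0$ by Proposition~\ref{prop:sgvar}.

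For \emph{lower semicontinuity}, fix $r<1$ and consider $Y_{p,r\nu_0}$ for $p$ near $p_0$. At $p=p_0$ this member has variance $r^2p_0(1-p_0)\nu_0^2<1$ by \eqref{eqn:sgvariance}. It is also strictly feasible at every nonzero frequency, because Lemma~\ref{lem:mono-nu}\,(i) gives $g_{p_0}(\lambda r\nu_0)<g_{p_0}(\lambda\nu_0)$. We then repeat the three-region argument of Lemma~\ref{lem:sgenlarge}, perturbing the weight $p$ instead of the dilation $r$:
\begin{itemize}
\item near $\lambda=0$, the second derivative $K''$ stays below some $\eta<1$;
\item near $|\lambda|=1/a$, the envelope diverges while $|K|$ stays bounded;
\item on the compact middle region, the strict gap persists under uniform convergence in $p$.
\end{itemize}
This gives feasibility of $r\nu_0$ for all $p$ close to $p_0$, so $\liminf_{p\to p_0}\nu^{\SG}_{\max}(p,a)\ge r\nu_0$. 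Letting $r\uparrow1$ proves continuity. I expect this uniform-in-$p$ version of the enlargement lemma to be the main technical step, though it mirrors Lemma~\ref{lem:sgenlarge} closely.

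\emph{Limit as $p\uparrow1/2$.} For the upper bound, \eqref{eqn:sgvariance} gives $R^{\SG}(p,a)\le\sqrt{(1-p)/p}\,/(1+\log\frac1{2p})\to1$. For the lower bound, I would use the variance-one laws $W_x$ of Proposition~\ref{prop:sgone}, which lie in $\SG(1,a)$ for all small $x=1-2p$ by \eqref{eqn:sgxrange}. They give $\nu^{\SG}_{\max}(p,a)\ge1/\sqrt{p(1-p)}$, so $R^{\SG}(p,a)\ge\rho(x)\to1$. (Alternatively, $\nu^{\SG}_{\max}\ge2$ already gives the limit $1$.)

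\emph{Limit as $p\downarrow0$.} For the upper bound, \eqref{eqn:sginterior} gives $R^{\SG}(p,a)\le(a\ell+\sqrt{2\ell})/(\ell+d_0)\to a$ as $\ell\to\infty$. For the lower bound, the proof of Proposition~\ref{prop:sga} shows that $Y_{p,\nu_0}$, with $\nu_0$ as in \eqref{eqn:sgnu0}, is feasible for all small $p$. Hence $R^{\SG}(p,a)\ge(1-p)\nu_0/(\ell+d_0)=a$. Combining the two bounds yields the limit $a$.
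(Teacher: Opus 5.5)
Your proposal is correct and follows the paper's proof essentially step by step. For upper semicontinuity you use compactness from \eqref{eqn:sgvariance} and pass to the limit frequency by frequency; for lower semicontinuity you run the three-region argument of Lemma~\ref{lem:sgenlarge} with the weight perturbed instead of the dilation; and for the endpoint limits you squeeze with \eqref{eqn:sgvariance}, \eqref{eqn:sginterior} and the member $Y_{p,\nu_0}$ of \eqref{eqn:sgnu0}, exactly as the paper does. The differences are cosmetic: near $\lambda=0$ the paper uses an explicit cubic Taylor remainder (the tilted third central moment is at most $\nu^3$) where you use joint continuity of $K''$, and for $p\uparrow 1/2$ the paper's lower bound is the feasibility of $\nu=2$, which is your parenthetical alternative.
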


\begin{proof}
\emph{Upper semicontinuity.} Let $p_n \to p \in (0, 1/2)$. The weights $p_n$ eventually lie in a compact subinterval of $(0,1/2)$, on which \eqref{eqn:sgvariance} bounds $\nu^{\SG}_{\max}(p_n,a) \le 1/\sqrt{p_n(1-p_n)}$ uniformly in $n$; some subsequence therefore realizes $\limsup_n \nu^{\SG}_{\max}(p_n,a)$ and converges, say to $\nu$. At every fixed frequency $0 < |\lambda| < 1/a$ the map $(p,\nu) \mapsto M_{Y_{p,\nu}}(\lambda)$ is continuous, and passage to the limit along that subsequence in every inequality \eqref{eqn:sgfeas} shows that $\nu$ is feasible at $p$; hence $\limsup_n \nu^{\SG}_{\max}(p_n,a) = \nu \le \nu^{\SG}_{\max}(p,a)$.

\smallskip
\emph{Lower semicontinuity.} Let $\nu < \nu^{\SG}_{\max}(p,a)$ and choose $\nu'$ strictly between them. Strict monotonicity in the spread (Lemma~\ref{lem:mono-nu}\,(i)) and feasibility of $\nu'$ give strict inequality in \eqref{eqn:sgfeas} for $\nu$ at every nonzero frequency. Feasibility of $\nu'$ also gives $p(1-p)\nu'^2 \le 1$ by \eqref{eqn:sgvariance}, whence $p(1-p)\nu^2 \le (\nu/\nu')^2 < 1$: fix $\delta > 0$ with $p(1-p)\nu^2 \le 1 - 3\delta$, so that $\tilde p(1-\tilde p)\nu^2 \le 1 - 2\delta$ for every $\tilde p$ in a neighborhood of $p$. Write $K_{\tilde p}(\lambda) \coloneqq \log M_{Y_{\tilde p,\nu}}(\lambda)$. The slack then survives small perturbations of the weight through the three regions of the proof of Lemma~\ref{lem:sgenlarge}, with $\tilde p$ near $p$ in place of $r$ near $1$; the negative frequencies are controlled by Lemma~\ref{lem:sign}.
\begin{itemize}
    \item \emph{Near zero.} One has $K_{\tilde p}(0) = K_{\tilde p}'(0) = 0$ and $K_{\tilde p}''(0) = \tilde p(1-\tilde p)\nu^2 \le 1 - 2\delta$, while $K_{\tilde p}'''(\lambda)$ is the third central moment of the exponential tilt of $Y_{\tilde p,\nu}$ at $\lambda$, a law carried by two atoms at distance $\nu$, hence at most $\nu^3$ in absolute value uniformly in $\tilde p$ and $\lambda$. Taylor's theorem with third-order remainder gives $K_{\tilde p}(\lambda) \le (1-2\delta)\lambda^2/2 + \nu^3|\lambda|^3/6$, so with $\lambda_0 \coloneqq (3\delta/\nu^3) \wedge 1/(2a)$ one has $K_{\tilde p}(\lambda) \le (1-\delta)\lambda^2/2 < \lambda^2/2 \le \lambda^2/\left(2(1 - a|\lambda|)\right)$ for $0 < |\lambda| \le \lambda_0$.
    \item \emph{Near the endpoints.} Jensen's inequality gives $K_{\tilde p} \ge 0$ for the centered variable $Y_{\tilde p,\nu}$, and $|Y_{\tilde p,\nu}| \le \nu$ gives $K_{\tilde p}(\lambda) \le |\lambda|\nu \le \nu/a$ uniformly in $\tilde p$, while the envelope diverges as $|\lambda| \uparrow 1/a$: fix $\varepsilon > 0$ with the envelope exceeding $\nu/a$ on $(1-\varepsilon)/a \le |\lambda| < 1/a$.
    \item \emph{The middle.} On the compact set $\lambda_0 \le |\lambda| \le (1-\varepsilon)/a$ the gap $\lambda^2/\left(2(1-a|\lambda|)\right) - K_p(\lambda)$ is continuous and strictly positive, hence has a positive minimum, and $(\tilde p, \lambda) \mapsto K_{\tilde p}(\lambda)$ is uniformly continuous there, so the gap stays positive for every $\tilde p$ near $p$.
\end{itemize}
Taking $\tilde p$ within all three neighborhoods, $\nu$ remains feasible, so $\liminf_{\tilde p \to p}\nu^{\SG}_{\max}(\tilde p,a) \ge \nu$, and letting $\nu \uparrow \nu^{\SG}_{\max}(p,a)$ proves lower semicontinuity.

\smallskip
\emph{Endpoint limits.} As $p \uparrow 1/2$: the spread $\nu = 2$ is feasible for every $p$, by Lemma~\ref{lem:mono-nu}\,(ii) and the inclusion $\SE(1,a) \subseteq \SG(1,a)$, while \eqref{eqn:sgvariance} gives $\nu^{\SG}_{\max}(p,a) \le 1/\sqrt{p(1-p)} \to 2$; hence $R^{\SG}(p,a) \to (1/2)\cdot 2 = 1$. As $p \downarrow 0$: the proof of Proposition~\ref{prop:sga} exhibits, for all sufficiently small $p$, the feasible spread $\nu_0 = a(\ell + d_0)/(1-p)$ of \eqref{eqn:sgnu0} with $R^{\SG}(p,a) \ge (1-p)\nu_0/(\ell + d_0) = a$, whereas \eqref{eqn:sginterior} gives
\begin{align*}
    R^{\SG}(p,a) \le \frac{a\ell + \sqrt{2\ell}}{\ell + d_0} \longrightarrow a.
    &\qedhere
\end{align*}
\end{proof}

\begin{proof}[Proof of Theorem~\ref{thm:sgexact}]
Normalize $\sigma = 1$, through \eqref{eqn:normalize}. Propositions~\ref{prop:sgone} and~\ref{prop:sga} give
\begin{align*}
    \sup_{p \in (0,1/2]} R^{\SG}(p,a) = c^{\SG}(1,a) > 1 \vee a,
\end{align*}
by \eqref{eqn:sgvarformula}, which proves \eqref{eqn:sgstrict}. At $p = 1/2$, Lemma~\ref{lem:sgbinding} gives $\nu^{\SG}_{\max}(1/2,a) = 2$, hence $R^{\SG}(1/2,a) = 1 < c^{\SG}(1,a)$: removing the point $p = 1/2$ does not change the supremum, and the supremum over $(0, 1/2]$ equals the supremum over $(0, 1/2)$. By \eqref{eqn:sgRlimits}, the values of $R^{\SG}(\cdot,a)$ near both ends of $(0,1/2)$ approach $a$ and $1$, both strictly below $c^{\SG}(1,a)$; a maximizing sequence therefore stays in a compact subinterval of $(0, 1/2)$, where $R^{\SG}(\cdot,a)$ is continuous by Lemma~\ref{lem:sgcontinuity}, and a maximizer $p_a \in (0, 1/2)$ exists: the supremum in \eqref{eqn:sgvarformula} is a maximum, attained at $p_a$.

Put $\nu_a \coloneqq \nu^{\SG}_{\max}(p_a,a)$ and $c_a \coloneqq R^{\SG}(p_a,a) = c^{\SG}(1,a)$, so that
\begin{align}
    (1 - p_a)\,\nu_a = c_a\left(1 + \log\frac{1}{2p_a}\right).
    \label{eqn:sgsaturate}
\end{align}
At $t_a = c_a\log(1/(2p_a))$, one has $0 \le t_a < (1-p_a)\nu_a$, so only the upper atom contributes to the stop-loss value in \eqref{eqn:SY}, and, by \eqref{eqn:explicit},
\begin{align*}
    S_{Y_{p_a,\nu_a}}(t_a) = p_a\left((1-p_a)\nu_a - t_a\right) = p_a c_a = \frac{c_a}{2}\,e^{-t_a/c_a} = S_{c_a\Lap}(t_a).
\end{align*}
Hence $\E[f_a(Y_{p_a,\nu_a})] = \E[f_a(c_a\Lap)]$ for the hinge $f_a(x) = (x - t_a)_+$: the maximally spread member and the hinge saturate the comparison. Scaling completes the general case.
\end{proof}



\subsection*{Proof of the explicit bounds}

The upper bound $\mathcal{U}(a)$ of \eqref{eqn:sgM} is a one-dimensional supremum, and we evaluate it in closed form before assembling the chain \eqref{eqn:sgbounds}.

\begin{lem}[Exact evaluation of $\mathcal{U}$]
\label{lem:sgMeval}
Write $a_0 \coloneqq (2\log 2 - 1)/\left((1 - \log 2)\sqrt{2\log 2}\right)$. The function $\mathcal{U}$ of \eqref{eqn:sgM} is given by
\begin{align}
    \mathcal{U}(a) =
    \begin{cases}
        a\log 2 + \sqrt{2\log 2}, & 0 < a \le a_0,\\[2mm]
        \displaystyle a + \frac{1}{ad_0 + \sqrt{a^2d_0^2 + 2d_0}}, & a \ge a_0.
    \end{cases}
    \label{eqn:sgMpiecewise}
\end{align}
\end{lem}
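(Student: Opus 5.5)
We reduce the supremum defining $\mathcal{U}(a)$ to a single-variable calculus problem in $\ell\in[\log 2,\infty)$. Write $\phi(\ell)\coloneqq (a\ell+\sqrt{2\ell})/(\ell+d_0)$ and substitute $\ell=y^2$, with $y\ge y_0\coloneqq\sqrt{\log 2}$. The function becomes $\phi=(ay^2+\sqrt2\,y)/(y^2+d_0)$. The numerator of $\phi'$ is then a quadratic in $y$. Indeed, $\diff/\diff y$ of $N/D$ has numerator
\[
(2ay+\sqrt2)(y^2+d_0)-(ay^2+\sqrt2 y)(2y)=-\sqrt2\,y^2+2ad_0\,y+\sqrt2 d_0 .
\]
This quadratic is positive at $y=0$ and tends to $-\infty$. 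So $\phi$ increases on $[0,y^*]$ and decreases afterwards, where $y^*$ is the unique positive root,
\[
y^*=\frac{ad_0+\sqrt{a^2d_0^2+2d_0}}{\sqrt2}.
\]
Hence $\phi$ is unimodal on $[0,\infty)$. The supremum over $y\ge y_0$ is therefore attained at $y^*$ if $y^*\ge y_0$, and at $y_0$ otherwise. Since $\phi\to a$ as $y\to\infty$, the supremum is attained in either case.

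\textbf{The two branches.} The condition $y^*\ge y_0$ is monotone in $a$, because $y^*$ increases in $a$. Its boundary is the value of $a$ at which the quadratic vanishes at $y_0$, namely $-\sqrt2\log 2+2ad_0\sqrt{\log2}+\sqrt2 d_0=0$. Solving this for $a$ gives $a=\sqrt2(\log2-d_0)/(2d_0\sqrt{\log 2})=(2\log2-1)/(d_0\sqrt{2\log2})=a_0$. For $a\le a_0$ we have $y^*\le y_0$, so the maximum is at $\ell=\log 2$. There $\ell+d_0=1$, and the value is $a\log2+\sqrt{2\log2}$, which is the first branch.

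For $a\ge a_0$ we evaluate $\phi(y^*)$. The stationarity relation $\sqrt2 y^2=2ad_0y+\sqrt2 d_0$ can be rewritten as $2ay+\sqrt2=\sqrt2\,(y^2+d_0)/y$. At a critical point of $N/D$ we have $N/D=N'/D'$, so
\[
\phi(y^*)=\frac{2ay^*+\sqrt2}{2y^*}=a+\frac{1}{\sqrt2\,y^*}.
\]
Inserting the formula for $y^*$ gives $a+1/(ad_0+\sqrt{a^2d_0^2+2d_0})$, which is the second branch. At $a=a_0$ both formulas agree because $y^*=y_0$, consistent with the overlap in the statement.

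The only delicate point is the clean identification of the threshold with the stated $a_0$ and the evaluation at the critical point. Using the identity $N/D=N'/D'$ avoids messy algebra, so we expect no real obstacle beyond keeping track of signs, in particular $\log 2-d_0=2\log2-1>0$.
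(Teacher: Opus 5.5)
Your argument is correct and is essentially the paper's proof. Unimodality comes from the sign of the derivative numerator, the threshold $a_0$ from that sign at $\ell=\log 2$, and the second branch from the value at the critical point; your substitution $\ell=y^2$, with the explicit root $y^*$ and the identity $N/D=N'/D'$, repackages the paper's $\gamma=1/\sqrt{2u}$ and its quadratic in $\gamma$.

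One displayed step is wrong: the ``rewriting'' $2ay+\sqrt2=\sqrt2\,(y^2+d_0)/y$ of the stationarity relation is false. At $a=2$ the two sides evaluated at $y^*$ are about $5.96$ and $1.99$. You never use it, since $N/D=N'/D'$ alone gives $\phi(y^*)=a+1/(\sqrt2\,y^*)$, so simply delete it.
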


\begin{proof}
The ratio has at most one stationary point, so its supremum sits either at that point or at the left endpoint $u = \log 2$. Write the free variable of \eqref{eqn:sgM} as $u$, reserving $\ell$ for $\log(1/p)$, and let
\begin{align*}
    \phi_a(u) \coloneqq \frac{au + \sqrt{2u}}{u + d_0}
    \qquad (u \ge \log 2),
    \qquad\text{so that}\qquad
    \mathcal{U}(a) = \sup_{u \ge \log 2}\phi_a(u).
\end{align*}
Differentiation gives
\begin{align*}
    \phi_a'(u) = \frac{N(u)}{(u + d_0)^2},
    \qquad
    N(u) \coloneqq ad_0 - \frac{u - d_0}{\sqrt{2u}},
\end{align*}
and the map $u \mapsto (u - d_0)/\sqrt{2u}$ is strictly increasing on $(0,\infty)$, its derivative being $(u + d_0)/(2u\sqrt{2u}) > 0$. Thus $N$ is strictly decreasing, and the maximizer is the boundary point $u = \log 2$ exactly when $N(\log 2) \le 0$, that is,
\begin{align*}
    ad_0 \le \frac{\log 2 - d_0}{\sqrt{2\log 2}} = \frac{2\log 2 - 1}{\sqrt{2\log 2}},
    \qquad\text{equivalently}\qquad
    a \le a_0.
\end{align*}
Since $\log 2 + d_0 = 1$, the value at the boundary is $a\log 2 + \sqrt{2\log 2}$, the first branch of \eqref{eqn:sgMpiecewise}.

For $a > a_0$ there is a unique interior maximizer, the root of
\begin{align}
    ad_0 = \frac{u - d_0}{\sqrt{2u}}.
    \label{eqn:sgMstationary}
\end{align}
At this point, direct substitution using \eqref{eqn:sgMstationary} gives
\begin{align*}
    \mathcal{U}(a) - a = \frac{\sqrt{2u} - ad_0}{u + d_0}
    = \frac{1}{u + d_0}\left(\sqrt{2u} - \frac{u - d_0}{\sqrt{2u}}\right)
    = \frac{1}{\sqrt{2u}} \eqqcolon \gamma.
\end{align*}
Eliminating $u = 1/(2\gamma^2)$ from \eqref{eqn:sgMstationary} yields
\begin{align*}
    2d_0\gamma^2 + 2ad_0\gamma - 1 = 0,
\end{align*}
whose positive root is
\begin{align*}
    \gamma = \frac{1}{ad_0 + \sqrt{a^2d_0^2 + 2d_0}},
\end{align*}
proving the second branch. The two expressions coincide at $a = a_0$.
\end{proof}

\begin{proof}[Proof of Theorem~\ref{thm:sgbounds}]
Normalize $\sigma = 1$ through \eqref{eqn:normalize}; since $c^{\SG}(\sigma,\alpha) = \sigma\,c^{\SG}(1,a)$, multiplying the chain proved below by $\sigma$ gives \eqref{eqn:sgbounds}.

\smallskip
\emph{Lower bound.} By the inclusion $\SE(1,a) \subseteq \SG(1,a)$ of Section~\ref{sec:setting}, Theorem~\ref{thm:exact} gives $c^{\SG}(1,a) \ge c^{\SE}(1,a) = 1 \vee a$; the inequality is strict by \eqref{eqn:sgstrict}.

\smallskip
\emph{The bound $1 + 2a$.} By \eqref{eqn:sgquad} and $\sqrt{x + y} \le \sqrt x + \sqrt y$, for $p < 1/2$,
\begin{align*}
    \nu^{\SG}_{\max}(p,a) \le 2a\beta_p + \sqrt{\frac{2\beta_p}{1 - 2p}},
\end{align*}
extended to $p = 1/2$ by the limit convention \eqref{eqn:lhopital}. Multiplying by $1-p$ and applying Lemmas~\ref{lem:A} and~\ref{lem:B},
\begin{align*}
    (1-p)\,\nu^{\SG}_{\max}(p,a) \le (1 + 2a)\left(1 + \log\frac{1}{2p}\right),
\end{align*}
so Proposition~\ref{prop:sgvar} gives $c^{\SG}(1,a) \le 1 + 2a$.

\smallskip
\emph{The bound $\mathcal{U}(a)$.} By \eqref{eqn:sginterior}, with $\ell = \log(1/p) \ge \log 2$ and $\ell + d_0 = 1 + \log(1/(2p))$,
\begin{align*}
    R^{\SG}(p,a) \le \frac{a\ell + \sqrt{2\ell}}{\ell + d_0},
\end{align*}
and taking the supremum over $\ell \ge \log 2$ gives $c^{\SG}(1,a) \le \mathcal{U}(a)$, completing \eqref{eqn:sgbounds}; Lemma~\ref{lem:sgMeval} supplies the evaluation \eqref{eqn:sgMpiecewise}.

\smallskip
\emph{Ratio limits.} As $a \downarrow 0$, the chain $1 < c^{\SG}(1,a) \le 1 + 2a$ gives convergence to one. As $a \to \infty$, since $a_0 < \infty$, the second branch of \eqref{eqn:sgMpiecewise} applies eventually and its denominator is at least $2ad_0$, so
\begin{align*}
    a < c^{\SG}(1,a) \le a + \frac{1}{2ad_0},
\end{align*}
which again gives convergence of the ratio to one. Scaling by $\sigma$ yields \eqref{eqn:sgratio}.
\end{proof}

\subsection*{A sharper computable bound}

Each of \eqref{eqn:sgvariance}, \eqref{eqn:sginterior} and \eqref{eqn:sgquad} bounds the spread on its own, and a lower bound on the moment generating function at an interior frequency gives a fourth. Their pointwise minimum at each atom weight, taken before the optimization over the weight, is at most each of the two upper bounds of Theorem~\ref{thm:sgbounds}, at the price of a supremum evaluated numerically. For $p \in (0, 1/2]$ let $H(u) \coloneqq e^u - 1 - u$, which increases strictly from $0$ to $\infty$ on $[0,\infty)$, with inverse $H^{-1}$; with $2\beta_p/(1-2p)$ interpreted at $p = 1/2$ by its limit \eqref{eqn:lhopital}, define
\begin{align}
    B_V(p) &\coloneqq \sqrt{(1-p)/p},
    \quad
    B_I(p,a) \coloneqq a\ell + \sqrt{2\ell},
    \quad
    B_K(p,a) \coloneqq (1-p)\left(a\beta_p + \sqrt{a^2\beta_p^2 + \frac{2\beta_p}{1-2p}}\right),
    \notag \\
    B_H(p,a) &\coloneqq \inf_{0<\theta<1} \frac{a}{\theta}\,H^{-1}\left(\frac{\exp\left(\theta^2/\left(2a^2(1-\theta)\right)\right) - 1}{p}\right),
    \label{eqn:sgB}
\end{align}
and the combined computable bound
\begin{align}
    \overline C(a) \coloneqq \sup_{0 < p \le 1/2} \frac{\min\left\{B_V(p),\ B_I(p,a),\ B_K(p,a),\ B_H(p,a)\right\}}{1 + \log\frac{1}{2p}}.
    \label{eqn:sgCbar}
\end{align}

\begin{prop}[Combined computable bound]
\label{prop:sgCbar}
For every $a > 0$, with $\overline C(a)$ as in \eqref{eqn:sgCbar} and $\mathcal{U}(a)$ as in \eqref{eqn:sgM},
\begin{align*}
    1 \vee a \ <\ c^{\SG}(1,a) \ \le\ \overline C(a) \ \le\ (1 + 2a) \wedge \mathcal{U}(a).
\end{align*}
\end{prop}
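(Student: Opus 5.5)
The plan is to bound the maximal spread $\nu^{\SG}_{\max}(p,a)$ at each fixed weight by each of the four quantities divided by $1-p$, and then read off the chain through the variational identity \eqref{eqn:sgvarformula}. The strict lower bound $1\vee a < c^{\SG}(1,a)$ is already Theorem~\ref{thm:sgbounds}, or equivalently Propositions~\ref{prop:sgone} and~\ref{prop:sga}. So the work is $c^{\SG}(1,a)\le\overline C(a)$ and $\overline C(a)\le(1+2a)\wedge\mathcal U(a)$.

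\emph{Step 1: four spread bounds.} Fix $p\in(0,1/2]$ and write $\nu\coloneqq\nu^{\SG}_{\max}(p,a)$. This spread is feasible by Proposition~\ref{prop:sgvar}. The three bounds $(1-p)\nu\le B_V(p)$, $(1-p)\nu\le B_I(p,a)$ and $(1-p)\nu\le B_K(p,a)$ are \eqref{eqn:sgvariance}, \eqref{eqn:sginterior} and \eqref{eqn:sgquad} multiplied by $1-p$. For the first, note that $(1-p)/\sqrt{p(1-p)}=\sqrt{(1-p)/p}$. At $p=1/2$, \eqref{eqn:sgquad} is read by continuity as in Lemma~\ref{lem:sgspread}.

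For $B_H$ I use an exact lower bound on the MGF rather than retaining a single atom. With $u=\theta(1-p)\nu/a$ and $v=\theta p\nu/a$, so that $pu=(1-p)v$, the expansion in the proof of Proposition~\ref{prop:sga} gives
\begin{align*}
M_{Y_{p,\nu}}(\theta/a)=1+pH(u)+(1-p)H(-v)\ge 1+pH(u).
\end{align*}
Combining this with \eqref{eqn:subgamma} at $\lambda=\theta/a$ yields
\begin{align*}
pH(u)\le \exp\bigl(\theta^2/(2a^2(1-\theta))\bigr)-1 .
\end{align*}
Since $H$ is strictly increasing on $[0,\infty)$, this inverts to $(1-p)\nu\le (a/\theta)H^{-1}(\cdots)$. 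Taking the infimum over $\theta\in(0,1)$ gives $(1-p)\nu\le B_H(p,a)$.

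\emph{Step 2: upper bound on $c^{\SG}(1,a)$.} Step 1 gives
\begin{align*}
(1-p)\nu^{\SG}_{\max}(p,a)\le \min\{B_V,B_I,B_K,B_H\}.
\end{align*}
Dividing by $1+\log(1/(2p))$ bounds $R^{\SG}(p,a)$ by the ratio inside \eqref{eqn:sgCbar}. Taking the supremum over $p$ and applying \eqref{eqn:sgvarformula} yields $c^{\SG}(1,a)\le\overline C(a)$.

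\emph{Step 3: comparison with the explicit bounds.} I bound the minimum by $B_K$ and, separately, by $B_I$.

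Using $B_K$, the argument in the proof of Theorem~\ref{thm:sgbounds} applies: $\sqrt{x+y}\le\sqrt x+\sqrt y$ gives $B_K\le 2a(1-p)\beta_p+(1-p)\sqrt{2\beta_p/(1-2p)}$. Lemmas~\ref{lem:B} and~\ref{lem:A} then bound this by $(1+2a)(1+\log(1/(2p)))$ for every $p$, so $\overline C(a)\le 1+2a$.

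Using $B_I$, with $\ell=\log(1/p)\ge\log 2$ and $1+\log(1/(2p))=\ell+d_0$, each ratio is at most $(a\ell+\sqrt{2\ell})/(\ell+d_0)\le\mathcal U(a)$ by \eqref{eqn:sgM}. Hence $\overline C(a)\le\mathcal U(a)$.

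The only step with any content is the $B_H$ bound, and specifically the sign $H(-v)\ge0$ that lets the lower atom be discarded after the linear terms cancel. Everything else quotes earlier lemmas. The main care point is the $p=1/2$ endpoint of $B_K$: its limiting value is $2\cdot\tfrac12=1$, which is at most both $1+2a$ and the $\ell=\log 2$ value of $\mathcal U$, so the endpoint causes no difficulty.
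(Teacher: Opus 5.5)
Your proposal is correct and follows the paper's proof essentially step for step. You insert the same four spread bounds into the variational formula \eqref{eqn:sgvarformula} and compare the minimum with $B_K$ and with $B_I$ exactly as in the proof of Theorem~\ref{thm:sgbounds}; discarding $(1-p)H(-v)\ge 0$ in the $B_H$ step is the same estimate as the paper's use of $e^{-x}\ge 1-x$ on the lower atom.
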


\begin{proof}
The strict lower bound is \eqref{eqn:sgstrict}. The bounds $B_V$ and $B_K$ of \eqref{eqn:sgB} are \eqref{eqn:sgvariance} and \eqref{eqn:sgquad} after multiplication by $1-p$, and $B_I$ is \eqref{eqn:sginterior} itself: each dominates $(1-p)\,\nu^{\SG}_{\max}(p,a)$. For $B_H$, write $b = (1-p)\nu$ and $u = \theta b/a$, so that at $\lambda = \theta/a$,
\begin{align*}
    M_{Y_{p,\nu}}\left(\frac{\theta}{a}\right) = p\,e^{u} + (1-p)\,e^{-pu/(1-p)}
    \ge p\,e^u + 1 - p - pu = 1 + p\,H(u),
\end{align*}
by $e^{-x} \ge 1 - x$. Feasibility therefore implies
\begin{align*}
    H(u) \le \frac{\exp\left(\theta^2/\left(2a^2(1-\theta)\right)\right) - 1}{p},
\end{align*}
and the strict increase of $H$ on $[0,\infty)$ inverts it; since $b = ua/\theta$, multiplying by $a/\theta$ and taking the infimum over $\theta$ gives $b \le B_H(p,a)$. All four bounds hold simultaneously, so their pointwise minimum dominates $(1-p)\,\nu^{\SG}_{\max}(p,a)$ and may be inserted into \eqref{eqn:sgvarformula}, giving $c^{\SG}(1,a) \le \overline C(a)$. That minimum is at most $B_K$, which gives $\overline C(a) \le 1 + 2a$ by the subadditivity step and Lemmas~\ref{lem:A} and~\ref{lem:B} as in the proof of Theorem~\ref{thm:sgbounds}, and at most $B_I$, which gives $\overline C(a) \le \mathcal{U}(a)$ by the supremum over $\ell \ge \log 2$.
\end{proof}

\end{document}